\documentclass[twoside]{article}

%
\usepackage[accepted]{aistats2022}
%


\setlength{\pdfpageheight}{11in}
\setlength{\pdfpagewidth}{8.5in}
\usepackage[round]{natbib}

\bibliographystyle{apalike}

\usepackage{amssymb, amsmath, amsthm, latexsym}
\usepackage{url}
\usepackage{algorithm}
\usepackage{algorithmic}
\usepackage{tabularx}
\usepackage{paralist}
\usepackage{mathtools}

\usepackage{bbm} 

\usepackage{makecell}
\usepackage{multirow}
\usepackage{booktabs}

\usepackage{nicefrac}       

\usepackage[flushleft]{threeparttable} 

\usepackage{caption}
\usepackage{multirow}
\usepackage{colortbl}
\definecolor{bgcolor}{rgb}{0.8,1,1}
\definecolor{bgcolor2}{rgb}{0.8,1,0.8}
\definecolor{niceblue}{rgb}{0.0,0.19,0.56}

\usepackage{hyperref}
\hypersetup{colorlinks,linkcolor={blue},citecolor={niceblue},urlcolor={blue}}

\usepackage[dvipsnames]{xcolor}
\usepackage{pifont}

\usepackage{subfigure}

\newcommand{\R}{\mathbb{R}}

\def\<#1,#2>{\left\langle #1,#2\right\rangle}




\newtheorem{lemma}{Lemma}[section]
\newtheorem{theorem}{Theorem}[section]
\newtheorem{definition}{Definition}[section]

\theoremstyle{plain}

\usepackage{xspace}


\newcommand{\algname}[1]{{\sf  #1}\xspace}

\usepackage[colorinlistoftodos,bordercolor=orange,backgroundcolor=orange!20,linecolor=orange,textsize=scriptsize]{todonotes}

\renewcommand{\Re}{\mathrm{Re}}
\renewcommand{\Im}{\mathrm{Im}}
\newcommand{\Sp}{\mathrm{Sp}}
\newcommand{\Id}{\mathrm{Id}}

\newcommand{\Tr}{\mathrm{Tr}}


\newcommand{\cD}{{\cal D}}

\newcommand{\cH}{{\cal H}}

\newcommand{\cO}{{\cal O}}

\newcommand{\mA}{{\bf A}}

\newcommand{\mG}{{\bf G}}

\newcommand{\mI}{{\bf I}}

\newcommand{\mM}{{\bf M}}

\newcommand{\mU}{{\bf U}}

\newcommand{\EE}{\mathbb{E}}
\newcommand{\CC}{\mathbb{C}}

\newcommand{\tx}{\widetilde{x}}
\newcommand{\ty}{\widetilde{y}}
\newcommand{\hx}{\widehat{x}}
\newcommand{\hy}{\widehat{y}}

\usepackage{hyperref}
\graphicspath{{../plots/}}

\usepackage{makecell}

\usepackage{accents}
\newlength{\dhatheight}

\usepackage{pgfplotstable} 
\usetikzlibrary{automata, positioning, arrows, shapes, fit, calc, intersections}
\usepgfplotslibrary{statistics}
\include{figure}

\begin{document}

%
\runningtitle{Extragradient Method: $\cO\left(\nicefrac{1}{K}\right)$ Last-Iterate Convergence for Monotone Variational Inequalities}

%

\twocolumn[

\aistatstitle{Extragradient Method: $\cO\left(\nicefrac{1}{K}\right)$ Last-Iterate Convergence for Monotone Variational Inequalities\\ and Connections With Cocoercivity}

\aistatsauthor{ Eduard Gorbunov \And Nicolas Loizou \And Gauthier Gidel}

\aistatsaddress{ \begin{tabular}{c}
    MIPT, Russia\\ Mila \& UdeM, Canada
\end{tabular} \And  \begin{tabular}{c}
    Johns Hopkins University \\
Baltimore, USA
\end{tabular} \And \begin{tabular}{c}
    Mila \& UdeM, Canada\\  Canada CIFAR AI Chair
\end{tabular}} ]

\begin{abstract}
    Extragradient method (\algname{EG}) \citep{korpelevich1976extragradient} is one of the most popular methods for solving saddle point and variational inequalities problems (VIP). Despite its long history and significant attention in the optimization community, there remain important open questions about convergence of \algname{EG}. In this paper, we resolve one of such questions and derive the first \textit{last-iterate} $\cO(\nicefrac{1}{K})$ convergence rate for \algname{EG} for monotone and Lipschitz VIP \textit{without any additional assumptions} on the operator unlike the only known result of this type \citep{golowich2020last} that relies on the Lipschitzness of the Jacobian of the operator. The rate is given in terms of reducing the squared norm of the operator. Moreover, we establish several results on the (non-)cocoercivity of the update operators of \algname{EG}, Optimistic Gradient Method, and Hamiltonian Gradient Method, when the original operator is monotone and Lipschitz. 
\end{abstract}

\section{INTRODUCTION}
Saddle point problems receive a lot of attention during recent years, especially in the machine learning community. These problems appear in various applications such as robust optimization \citep{ben2009robust} and control \citep{hast2013pid}, adversarial training \citep{goodfellow2015explaining, madry2018towards} and generative adversarial networks (GANs) \citep{goodfellow2014generative}. Saddle point problems are often studied from the perspective of variational inequality problems (VIP) \citep{harker1990finite,ryu2020large, gidel2019variational}. In the unconstrained case, VIP is defined as follows:
\begin{equation}
    \text{find } x^* \in \R^d \quad \text{such that}\quad F(x^*) = 0, \label{eq:main_problem} \tag{VIP}
\end{equation}
where $F:\R^d \to \R^d$ is some operator.

Such problems are usually solved via first-order methods due to their practical efficiency. The simplest example of such a method is Gradient Descent (\algname{GD}): $x^{k+1} = x^k - \gamma F(x^k)$. However, there exist examples of simple (monotone and $L$-Lipschitz) problems such that \algname{GD} does not converge to the solution. To circumvent this issue Extragradient Method (\algname{EG}) $x^{k+1} = x^k - \gamma F(x^k - \gamma F(x^k))$ \citep{korpelevich1976extragradient} and Optimistic Gradient Method (\algname{OG}) $x^{k+1} = x^k - 2\gamma F(x^k) + \gamma F(x^{k-1})$ \citep{popov1980modification} were proposed. After their discovery, these methods were revisited and extended in various ways, e.g., stochastic \citep{gidel2019variational, mishchenko2020revisiting, hsieh2020explore,li2021convergence}, distributed \citep{liu2020decentralized, beznosikov2020distributed, beznosikov2021decentralized}, and non-Euclidean versions \citep{juditsky2011solving, azizian2021last} were proposed and analyzed.

Surprisingly, despite the long history of and huge interest in \algname{EG} and \algname{OG}, there exist significant gaps in the theory of these methods. In particular, it is well known that both methods converge in terms of $\min_{k=0,1,\ldots,K}\|F(x^k)\|^2$ with rate $\cO(\nicefrac{1}{K})$ for monotone $L$-Lipschitz operator $F$ \citep{solodov1999hybrid, ryu2019ode}. Although such \textit{best-iterate} guarantees provide valuable information about the rate of convergence, they do not state anything about \textit{last-iterate} convergence rate. Recently, this limitation was partially resolved in \citet{golowich2020last,golowich2020tight} where the authors proved last-iterate $\cO(\nicefrac{1}{K})$ convergence rate for \algname{EG} and \algname{OG} \textit{under the additional assumption} that the Jacobian $\nabla F(x)$ of operator $F(x)$ is $\Lambda$-Lipschitz. However, the obtained rates depend on the $\Lambda$ that can be much larger than $L$ or even undefined for some operators (see Appendix~\ref{sec:details_on_lip_jac}). That is, the following important question remains open:
\begin{gather*}
    \text{\color{blue}Q1: }\text{\it Is it possible to prove last-iterate $\cO(\nicefrac{1}{K})$}\\ \text{\it convergence rate for \algname{EG}/\algname{OG} when $F$ is monotone}\\ \text{\it and $L$-Lipschitz \underline{without additional assumptions}?}
\end{gather*}

Next, there is a noticeable activity in the analysis of various methods for solving \eqref{eq:main_problem} under the cocoercivity assumption on $F$ during the last few years \citep{chavdarova2019reducing,malinovskiy2020local,loizou2021stochastic}. Unfortunately, this assumption is stronger than monotonicity and Lipschitzness of $F$: it does not hold even for bilinear games. However, under the cocoercivity of $F$ the analysis of some methods becomes extremely simple. For example, if operator $F$ is cocoercive, then one can easily prove last-iterate $\cO(\nicefrac{1}{K})$ convergence rate for \algname{GD} \citep{brezis1978produits, diakonikolas2021potential}.

Furthermore, it is known that Proximal Point operator $F_{\algname{PP},\gamma}(x)$ implicitly defined as $F_{\algname{PP},\gamma}(x) = F(y)$, where $y = x - \gamma F(y)$, is cocoercive for any monotone $F$ (Corollary 23.10 from \citep{bauschke2011convex}). Therefore, Proximal Point method (\algname{PP}) $x^{k+1} = x^k - \gamma F(x^{k+1})$ \citep{martinet1970regularisation, rockafellar1976monotone} can be seen as \algname{GD} for operator $F_{\algname{PP},\gamma}$ and last-iterate $\cO(\nicefrac{1}{K})$ convergence rate follows from the analysis of \algname{GD} under the cocoercivity. Since \algname{EG} and \algname{OG} are often considered as approximations of \algname{PP} when $F$ is $L$-Lipschitz \citep{mokhtari2019proximal}, there is a hope that \algname{EG} and \algname{OG} can be rewritten as \algname{GD} for some cocoercive operator. In particular, for \algname{EG} one can consider $F_{\algname{EG},\gamma}(x) = F(x - \gamma F(x))$ and get that \algname{EG} for $F$ is \algname{GD} for $F_{\algname{EG},\gamma}$. Using matrix notation and rewriting \algname{OG} using $z^k = ((x^{k})^\top, (x^{k-1})^\top)^\top$, one can also construct $F_{\algname{OG},\gamma}(x)$ and consider \algname{OG} as \algname{GD} for this operator. Keeping in mind the simplicity of getting last-iterate $\cO(\nicefrac{1}{K})$ convergence rate for \algname{GD} under the cocoercivity, the following question arises:
\begin{gather*}
    \text{\color{blue}Q2: }\text{\it Are operators $F_{\algname{EG},\gamma}$ and $F_{\algname{OG},\gamma}$ cocoercive}\\ \text{\it when $F$ is monotone and $L$-Lipschitz?}
\end{gather*}

In this paper, we give a positive answer to the first question ({\color{blue}Q1}) and negative answer to the second question ({\color{blue}Q2}). Before we summarize our main contributions, we introduce necessary definitions.

\subsection{Preliminaries}
If the opposite is not specified, throughout the paper we assume that operator $F$ from \eqref{eq:main_problem} is monotone
\begin{equation}
    \langle F(x) - F(x'), x - x'\rangle \ge 0 \quad \forall x,x' \in \R^d, \label{eq:monotonicity_def}
\end{equation}
and $L$-Lipschitz
\begin{equation}
    \|F(x) - F(x')\| \le L\|x - x'\|\quad  \forall x,x' \in \R^d. \label{eq:L_lip_def}
\end{equation}

Next, we also rely on the definition of cocoercivity.
\begin{definition}[Cocoercivity]
    Operator $F: \R^d \to \R^d$ is called $\ell$-cocoercive if for all $x,x' \in \R^d$
    \begin{equation}
        \|F(x) - F(x')\|^2 \le \ell \langle F(x) - F(x'), x - x' \rangle. \label{eq:l_cocoercivity}
    \end{equation}
\end{definition}

Using Cauchy-Schwarz inequality, one can easily show that $L$-cocoercivity of $F$ implies its monotonicity and $L$-Lipschitzness. The opposite is not true: it is sufficient to take $F$ corresponding to the bilinear game (see \citep{carmon2019variance} and references therein).

\paragraph{Measures of convergence.} In the literature on VIP, the convergence of different methods is often measured via so-called merit or gap functions, e.g., restricted gap function $\texttt{Gap}_F(x) = \max_{y\in\R^d:\|y-x^*\| \le R}\langle F(y), x - y \rangle$, where $R \sim \|x^0 - x^*\|$ \citep{nesterov2007dual}. When $F$ is monotone, $\texttt{Gap}_F(x)$ can be seen as a natural extension of optimization error for VIP. However, it is unclear how to tightly estimate $\texttt{Gap}_F(x)$ in practice and how to generalize it to non-monotone case. From this perspective, the squared norm of the operator is preferable as a measure of convergence (see \citep{yoon2021accelerated} and references therein). Therefore, we focus on $\|F(x^K)\|^2$. We notice here that in the constrained case (squared) norm of the operator is not a valid measure of convergence.

\subsection{Contributions}
Below we summarize our main contributions.

\begin{itemize}
    \item We prove that $\|F(x^K)\|^2 = \cO\left(\nicefrac{1}{K}\right)$ where $x^K$ is generated after $K$ iterations of Extragradient Method (\algname{EG}) applied to solve \eqref{eq:main_problem} with monotone $L$-Lipschitz operator $F$ (Theorem~\ref{thm:EG_last_iter_conv_non_linear}). That is, we derive the first last-iterate $\cO\left(\nicefrac{1}{K}\right)$ convergence rate for \algname{EG} under monotonicity and $L$-Lipschitzness assumptions and without additional ones. The key part of our proof is obtained via solving\footnote{Our code is available at \url{https://github.com/eduardgorbunov/extragradient_last_iterate_AISTATS_2022}. In the MATLAB code, we use PESTO \citep{taylor2017performance}, SEDUMI \citep{sedumi}, YALMIP \citep{lofberg2004yalmip} libraries, and in the part written in Python, we use CVXPY \citep{diamond2016cvxpy}.} special Performance Estimation Problem \citep{taylor2017performance, ryu2020operator}.
    \item When $F(x)$ is additionally affine, we show that $F_{\algname{EG},\gamma}$ is $\nicefrac{2}{\gamma}$-cocoercive (Lemma~\ref{lem:cocoercivity_of_EG_linear_spec_appendix}) that gives an alternative proof of last-iterate $\cO\left(\nicefrac{1}{K}\right)$ convergence for \algname{EG} based on the result for \algname{GD} (Theorem~\ref{thm:last_iter_conv_GD}).
    \item Guided by the solution of a certain Performance Estimation Problem we prove that for all $\gamma \in (0,\nicefrac{1}{L}]$ there exists $L$-Lipschitz and monotone operator $F$ such that $F_{\algname{EG},\gamma}$ is not $\ell$-cocoercive for any  $\ell > 0$ (Theorem~\ref{thm:EG_random_iter_conv}). This fact emphasizes the significant difference between \algname{EG} and Proximal Point method.
    \item We show that $F_{\algname{EG},\gamma}$ is $\nicefrac{2}{\gamma}$-star-cocoercive, i.e., cocoercive towards the solution, when $F$ is star-monotone and $L$-Lipschitz (Lemma~\ref{lem:EG_star_cocoercive}).
    \item For Optimistic Gradient method (\algname{OG}) we consider two popular representations -- standard one and extrapolation from the past (\algname{EFTP}) -- and corresponding operators $F_{\algname{OG},\gamma}$ and $F_{\algname{EFTP},\gamma}$. We prove that these operators are even non-star-cocoercive for any $\gamma > 0$ (Theorems~\ref{thm:F_OG_is_not_star_cocoercive}). This fact emphasize the difference between \algname{OG} and \algname{EG}.
    \item Finally, in the case when we additionally have Lipschitzness of the Jacobian $\nabla F$, we show that operator $F_{\cH}(x) = \nabla F(x)^\top F(x)$ of Hamiltonian Gradient Method $x^{k+1} = x^k - \gamma \nabla F(x^{k})^\top F(x^k)$ (\algname{HGM}) \citep{balduzzi2018mechanics} can be non-cocoercive when $F$ is non-affine (Theorem~\ref{thm:HGM_non_cocoercive}). Moreover, we derive best-iterate $\cO\left(\nicefrac{1}{K}\right)$ convergence rate in terms of the squared norm of the gradient of the Hamiltonian function $\cH(x) = \frac{1}{2}\|F(x)\|^2$ when $F$ and $\nabla F$ are Lipschitz-continuous but $F$ is not necessary monotone (Theorem~\ref{thm:best_iter_conv_HGM_appendix}). The details are given in Appendix~\ref{sec:HGM}.
\end{itemize}

\subsection{Related Work}

As we mention earlier, when $F$ is monotone and $L$-Lipschitz both \algname{EG} and \algname{OG} are usually analyzed in terms of the convergence for the best-iterate or the averaged-iterate. In particular, guarantees of the form $\texttt{Gap}_F(\overline{x}^K) = \cO(\nicefrac{1}{K})$ with $\overline{x}^K$ being the average of the iterates are shown in \citet{nemirovski2004prox, mokhtari2019proximal, hsieh2019convergence, monteiro2010complexity, auslender2005interior} and results like $\min_{k=0,1,\ldots,K}\|F(x^k)\|^2 = \cO(\nicefrac{1}{K})$ are given in \citet{solodov1999hybrid, ryu2019ode}. Unfortunately, these results do not provide convergence rates for the last-iterate, i.e., for $\texttt{Gap}_F(x^K)$ and $\|F(x^K)\|^2$. It turns out that both \algname{EG} and \algname{OG} satisfy the following lower bound: $\texttt{Gap}_F(x^K) = \Omega(\nicefrac{1}{\sqrt{K}})$ \citep{golowich2020last, golowich2020tight}, i.e., in terms of the gap function \algname{EG} and \algname{OG} have slower convergence for the last iterate than for the averaged iterate. 

However, as it is explained above, we focus on the convergence rates for $\|F(x^K)\|^2$. The mentioned negative results do not imply anything about the convergence in terms of $\|F(x^K)\|^2$. Moreover, for \algname{EG} and \algname{OG} \citet{golowich2020last, golowich2020tight} prove $\|F(x^K)\|^2 = \cO(\nicefrac{1}{K})$ rate under the additional assumption that the Jacobian $\nabla F(x)$ is $\Lambda$-Lipschitz. In particular, the derived rate depends on the $\Lambda$, which can be much larger than $L$ for some operators, e.g., for the gradient of logistic loss function (see Appendix~\ref{sec:details_on_lip_jac}), or simply be undefined when $\nabla F(x)$ does not exist on the whole space. In contrast, we prove $\|F(x^K)\|^2 = \cO(\nicefrac{1}{K})$ rate for \algname{EG} without any additional assumptions.

Next, the state-of-the-art last-iterate convergence rates are $\cO(\nicefrac{1}{K^2})$ \citep{kim2021accelerated,yoon2021accelerated}. Moreover, \citet{yoon2021accelerated} derive the optimality of $\cO(\nicefrac{1}{K^2})$ rate in the class of monotone and Lipschitz VIP. Although this rate is better than what we derive for \algname{EG}, this is obtained for different methods (Accelerated \algname{PP} and Anchored \algname{EG}). Since \algname{EG} is one of the most popular methods for solving \eqref{eq:main_problem}, it is important to resolve open questions about it like last-iterate convergence rates. Moreover, in view of the lower bound from \citet{golowich2020last}, our result for the last-iterate convergence of \algname{EG} is optimal for \algname{EG} up to numerical constants.

Finally, we emphasize that it is possible to obtain a linear last-iterate convergence rate when $F$ is additionally strongly monotone. The corresponding results are well-known both for \algname{EG} \citep{tseng1995linear} and \algname{OG} \citep{gidel2019variational, mokhtari2020unified}. Moreover, one can achieve a linear rate under slightly weaker assumptions like quasi-strong monotonicity \citep{loizou2021stochastic}, its local variant (with local guarantees) \citep{azizian2021last}, positive-definiteness of the Jacobian around the solution (also with local guarantees) \citep{hsieh2019convergence}, and error bound (see \citep{hsieh2020explore} and references therein).


\section{COCOERCIVITY AND STAR-COCOERCIVITY}\label{sec:cocoercivity}

In this section, we introduce the main tools connected with cocoercivity. First of all, it is known that cocoercivity is closely related to non-expansiveness in the following sense.


\begin{lemma}[Proposition 4.2 from \citet{bauschke2011convex}]\label{lem:l_non_exp_and_cocoercive}
    For any operator $F:\R^d \to \R^d$ the following are equivalent: (i) $\Id - \frac{2}{\ell}F$ is non-expansive; (ii) $F$ is $\ell$-cocoercive.
\end{lemma}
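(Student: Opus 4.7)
The plan is to establish the equivalence by a single algebraic identity obtained from expanding the squared norm $\|(\Id - \tfrac{2}{\ell}F)(x) - (\Id - \tfrac{2}{\ell}F)(x')\|^2$. Since both properties are pointwise conditions on the pair $(x, x')$, it suffices to show that one inequality is equivalent to the other for every fixed pair.

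More concretely, setting $T \eqdef \Id - \tfrac{2}{\ell}F$ and expanding for arbitrary $x, x' \in \R^d$, I would compute
\begin{equation*}
\|T(x) - T(x')\|^2 = \|x - x'\|^2 - \frac{4}{\ell}\langle F(x) - F(x'), x - x'\rangle + \frac{4}{\ell^2}\|F(x) - F(x')\|^2.
\end{equation*}
Non-expansiveness of $T$, namely $\|T(x) - T(x')\|^2 \le \|x - x'\|^2$, is therefore equivalent to
\begin{equation*}
\frac{4}{\ell^2}\|F(x) - F(x')\|^2 \le \frac{4}{\ell}\langle F(x) - F(x'), x - x'\rangle,
\end{equation*}
which, after multiplying both sides by $\ell/4 > 0$, becomes exactly the $\ell$-cocoercivity condition \eqref{eq:l_cocoercivity}.

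Since every step is an \emph{if and only if}, the two directions (i) $\Rightarrow$ (ii) and (ii) $\Rightarrow$ (i) are obtained simultaneously with no additional work. There is no real obstacle here: the only thing to be careful about is keeping track of the factor of $\ell$ in the rescaling, so that the $\tfrac{2}{\ell}$ appearing in the reflected/averaged operator matches the cocoercivity constant $\ell$ on the right-hand side of \eqref{eq:l_cocoercivity}. No additional structural assumption on $F$ (such as linearity, monotonicity, or continuity) is used in the argument.
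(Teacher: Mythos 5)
Your proof is correct, but it takes a more self-contained route than the paper. The paper does not expand the square at all: it invokes Proposition~4.2 of \citet{bauschke2011convex} as a black box for the normalized case ($\Id - 2F$ non-expansive $\Leftrightarrow$ $F$ is $1$-cocoercive) and then only proves the rescaling step, namely that $F$ is $\ell$-cocoercive iff $\tfrac{1}{\ell}F$ is $1$-cocoercive. Your one-line algebraic identity proves the full equivalence from scratch, which is arguably preferable since it makes the lemma independent of the external reference; the paper's version buys brevity and makes the attribution transparent, at the cost of hiding the (identical) computation inside the citation. One cosmetic slip: to land \emph{exactly} on \eqref{eq:l_cocoercivity} you should multiply the displayed inequality by $\ell^2/4$, not $\ell/4$; multiplying by $\ell/4$ gives the equivalent form $\tfrac{1}{\ell}\|F(x)-F(x')\|^2 \le \langle F(x)-F(x'), x-x'\rangle$, so the argument is unaffected. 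Your observation that no monotonicity, linearity, or continuity of $F$ is needed is also consistent with the paper's statement.
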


We use this lemma to prove non-cocoercivity of $F_{\algname{EG}, \gamma}$.

Next, we also study a relaxation of cocoercivity called star-cocoercivity, which turns out to be sufficient to derive best- or random-iterate $\cO(\nicefrac{1}{K})$ rate for \algname{GD}.

\begin{definition}[Star-cocoercivity]
    Operator $F: \R^d \to \R^d$ is called $\ell$-star-cocoercive around $x^*$ if $F(x^*) = 0$ and for all $x \in \R^d$
    \begin{equation}
        \|F(x)\|^2 \le \ell \langle F(x), x - x^* \rangle. \label{eq:l_star_cocoercivity}
    \end{equation}
\end{definition}

Further discussion of cocoercivity and star-cocoercivity is deferred to Appendix~\ref{appendix:cocoercivity}.

\subsection{Analysis of Gradient Descent Under Cocoercivity}\label{sec:GD_cocoercivity}

The simplest method for solving \eqref{eq:main_problem} is Gradient Descent (\algname{GD}):
\begin{equation}
    x^{k+1} = x^k - \gamma F(x^k) \tag{GD}. \label{eq:GD_update}
\end{equation}
If operator $F$ is star-cocoercive, then one can easily show random-iterate convergence of \algname{GD}.


\begin{theorem}[Random-iterate convergence of \algname{GD}]\label{thm:random_iter_conv_GD}
    Let $F:\R^d \to \R^d$ be $\ell$-star-cocoercive around $x^*$. Then for all $K\ge 0$ we have
    \begin{equation}
        \EE\|F(\widehat x^K)\|^2 \le \frac{\ell\|x^0 - x^*\|^2}{\gamma (K+1)}, \label{eq:random_iter_conv_GD}
    \end{equation}
    where $\widehat{x}^K$ is chosen uniformly at random from the set of iterates $\{x^0,x^1,\ldots,x^K\}$ produced by \algname{GD} with $0 < \gamma \le \nicefrac{1}{\ell}$.
\end{theorem}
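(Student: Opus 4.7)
The plan is to establish a one-step descent inequality for the squared distance to $x^*$ along the \algname{GD} trajectory, then telescope and take the expectation over the uniform random index. This is the standard Lyapunov-style argument, and star-cocoercivity is exactly the property needed to make it go through.

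First, I would expand
\begin{equation*}
\|x^{k+1} - x^*\|^2 = \|x^k - \gamma F(x^k) - x^*\|^2 = \|x^k - x^*\|^2 - 2\gamma \langle F(x^k), x^k - x^*\rangle + \gamma^2 \|F(x^k)\|^2.
\end{equation*}
Applying $\ell$-star-cocoercivity \eqref{eq:l_star_cocoercivity} to lower-bound the inner product by $\tfrac{1}{\ell}\|F(x^k)\|^2$ gives
\begin{equation*}
\|x^{k+1} - x^*\|^2 \le \|x^k - x^*\|^2 - \gamma\left(\tfrac{2}{\ell} - \gamma\right)\|F(x^k)\|^2.
\end{equation*}
The step-size condition $\gamma \le 1/\ell$ ensures $\tfrac{2}{\ell} - \gamma \ge \tfrac{1}{\ell}$, so the coefficient in front of $\|F(x^k)\|^2$ is at least $\gamma/\ell > 0$, which is the key sign we need to make this a proper descent.

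Next I would telescope the inequality from $k=0$ to $k=K$ and drop the nonnegative term $\|x^{K+1}-x^*\|^2$ to obtain
\begin{equation*}
\frac{\gamma}{\ell}\sum_{k=0}^{K}\|F(x^k)\|^2 \le \|x^0 - x^*\|^2.
\end{equation*}
Finally, since $\widehat{x}^K$ is drawn uniformly from $\{x^0,\ldots,x^K\}$, we have $\EE\|F(\widehat{x}^K)\|^2 = \tfrac{1}{K+1}\sum_{k=0}^{K}\|F(x^k)\|^2$, and dividing through by $K+1$ yields the claimed bound \eqref{eq:random_iter_conv_GD}.

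There is no real obstacle here, the argument is essentially the cocoercive \algname{GD} proof with global cocoercivity replaced by the weaker star version, which is exactly strong enough because the $\|x^k - x^*\|^2$ potential only ever needs to compare $x^k$ against $x^*$, not against another arbitrary point $x'$. The only subtlety is making sure the step-size condition is tight enough to keep $\tfrac{2}{\ell}-\gamma$ bounded away from zero by $\tfrac{1}{\ell}$; any $\gamma \in (0,1/\ell]$ works and gives the stated constant.
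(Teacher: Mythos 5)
Your proposal is correct and follows exactly the paper's own argument: the one-step expansion of $\|x^{k+1}-x^*\|^2$ combined with star-cocoercivity is precisely the descent lemma (Lemma~\ref{lem:descent_lem_GD}), and the telescoping plus the bound $\tfrac{2}{\ell}-\gamma \ge \tfrac{1}{\ell}$ is how the paper concludes. Nothing to add.
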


The proof of this result requires a few lines of simple derivations. Next, to establish last-iterate convergence we need to assume cocoercivity of $F$. In particular, when $F$ is cocoercive it is possible to show that $\{\| F(x^k)\|^2\}_{k\ge 0}$ monotonically decreases. Using this and previous results one can derive last-iterate convergence (see also \citep{diakonikolas2021potential}).
\begin{theorem}[Last-iterate convergence of \algname{GD}]\label{thm:last_iter_conv_GD}
    Let $F:\R^d \to \R^d$ be $\ell$-cocoercive. Then for all $K\ge 0$ we have
    \begin{equation}
        \|F(x^K)\|^2 \le \frac{\ell\|x^0 - x^*\|^2}{\gamma (K+1)}, \label{eq:last_iter_conv_GD}
    \end{equation}
    where $x^K$ is produced by \algname{GD} with $0 < \gamma \le \nicefrac{1}{\ell}$.
\end{theorem}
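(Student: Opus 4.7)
The plan is to combine Theorem~\ref{thm:random_iter_conv_GD} with a monotonicity-of-the-iterates argument. Since cocoercivity of $F$ implies star-cocoercivity of $F$ around any root $x^*$ (set $x' = x^*$ in \eqref{eq:l_cocoercivity}), Theorem~\ref{thm:random_iter_conv_GD} already gives
\begin{equation*}
    \frac{1}{K+1}\sum_{k=0}^{K}\|F(x^k)\|^2 \;\le\; \frac{\ell\|x^0 - x^*\|^2}{\gamma(K+1)},
\end{equation*}
since $\EE\|F(\widehat x^K)\|^2$ is exactly this running average. Therefore, to upgrade this to a last-iterate bound it is enough to show that $\|F(x^k)\|^2$ is non-increasing in $k$; then $\|F(x^K)\|^2 \le \frac{1}{K+1}\sum_{k=0}^{K}\|F(x^k)\|^2$ and we are done.

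To show monotonicity of $\|F(x^k)\|^2$, I would expand
\begin{equation*}
    \|F(x^{k+1})\|^2 = \|F(x^k)\|^2 + 2\langle F(x^k), F(x^{k+1}) - F(x^k)\rangle + \|F(x^{k+1}) - F(x^k)\|^2,
\end{equation*}
and apply $\ell$-cocoercivity with the \algname{GD} update $x^{k+1} - x^k = -\gamma F(x^k)$:
\begin{equation*}
    \|F(x^{k+1}) - F(x^k)\|^2 \;\le\; \ell\,\langle F(x^{k+1}) - F(x^k), x^{k+1} - x^k\rangle \;=\; -\gamma\ell\,\langle F(x^{k+1}) - F(x^k), F(x^k)\rangle.
\end{equation*}
Solving for the inner product and substituting back yields
\begin{equation*}
    \|F(x^{k+1})\|^2 \;\le\; \|F(x^k)\|^2 + \Bigl(1 - \tfrac{2}{\gamma\ell}\Bigr)\|F(x^{k+1}) - F(x^k)\|^2.
\end{equation*}
Under the step-size condition $\gamma \le 1/\ell$, the coefficient $1 - 2/(\gamma\ell) \le -1 < 0$, so $\|F(x^{k+1})\|^2 \le \|F(x^k)\|^2$, as desired.

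The main (modest) obstacle is verifying the step-size regime in the monotonicity step: the bound on $\|F(x^{k+1})\|^2$ only decreases when $\gamma \le 2/\ell$, and the hypothesis $\gamma \le 1/\ell$ is more than enough. No other delicate ingredient is needed; the whole proof is a two-line cocoercivity computation combined with the running-average estimate from Theorem~\ref{thm:random_iter_conv_GD}.
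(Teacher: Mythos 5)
Your proposal is correct and follows essentially the same route as the paper: the paper also first derives the running-average bound from (star-)cocoercivity and then proves $\|F(x^{k+1})\|\le\|F(x^k)\|$ via the cocoercivity inequality between consecutive iterates (its Lemma~\ref{lem:relax_lemma} arrives at exactly your bound $\|F(x^{k+1})\|^2\le\|F(x^k)\|^2+(1-\tfrac{2}{\gamma\ell})\|F(x^{k+1})-F(x^k)\|^2$, valid for $\gamma\le\nicefrac{2}{\ell}$, after a slightly different rearrangement). No gaps.
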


Overall, the analysis of \algname{GD} under star-cocoercivity and cocoercivity is straightforward and almost identical to the analysis of \algname{GD} for convex smooth minimization.

\subsection{Proximal Point Method}
Consider the following iterative process called Proximal Point method (\algname{PP}):
\begin{equation}
    x^{k+1} = x^k - \gamma F(x^{k+1}). \tag{PP} \label{eq:PP_update_rule}
\end{equation}
That is, the next point $x^{k+1}$ is defined implicitly for given $x^k$ and $\gamma > 0$. Moreover, for given $\gamma > 0$ and any point $x$ we can define operator $F_{\text{PP}, \gamma}: \R^d \to \R^d$ such that $\forall x\in \R^d$
\begin{equation}
    F_{\text{PP},\gamma}(x) = F(y),\quad \text{where} \quad y = x - \gamma F(y). \label{eq:PP_operator}
\end{equation}
Therefore, \eqref{eq:PP_update_rule} can be rewritten as \algname{GD} for $F_{\text{PP}, \gamma}$:
\begin{equation}
    x^{k+1} = x^k - \gamma F_{\text{PP},\gamma}(x^k). \notag 
\end{equation}
It turns out that $F_{\text{PP},\gamma}$ is $\nicefrac{2}{\gamma}$-cocoercive (Corollary 23.10 from \citet{bauschke2011convex}). For completeness, we provide the proof of this fact in the appendix. 

Then, applying Theorem~\ref{thm:last_iter_conv_GD} to the method
\begin{equation}
    x^{k+1} = x^k - \gamma F_{\text{PP}, \nicefrac{2}\ell}(x^k) \tag{PP-$\gamma$-$\ell$}, \label{eq:PP_update_v2}
\end{equation}
we get the following result (see also \citep{gu2019optimal}).
\begin{theorem}[Last-iterate convergence of \eqref{eq:PP_update_v2}]\label{thm:last_iter_conv_PP}
    Let $F: \R^d \to \R^d$ be monotone, $\ell > 0$ and $0 < \gamma \le \nicefrac{1}{\ell}$. Then for all $K\ge 0$ we have
    \begin{equation}
        \EE\|F(\widehat{x}^{K})\|^2 \le \frac{\ell\|x^0 - x^*\|^2}{\gamma (K+1)}, \label{eq:last_iter_conv_PP}
    \end{equation}
    where $\widehat{x}^{K} = x^K - \nicefrac{2}{\ell} F(\widehat{x}^{K}) = x^K - \nicefrac{2}{\ell} F_{\text{PP},\nicefrac{2}{\ell}}(\widehat{x}^{K})$ and $x^K$ is produced by \eqref{eq:PP_update_v2}.
\end{theorem}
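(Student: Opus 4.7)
The plan is to reduce the statement to a direct corollary of Theorem~\ref{thm:last_iter_conv_GD}, exactly as the sentence preceding the theorem suggests. The three ingredients are: (i) $\ell$-cocoercivity of the PP operator for the specific choice $\gamma_{\text{PP}} = \nicefrac{2}{\ell}$, (ii) the algebraic identification of \eqref{eq:PP_update_v2} with \algname{GD} on this cocoercive operator, and (iii) a translation of the final bound back from $F_{\text{PP},\nicefrac{2}{\ell}}$ to $F$.

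First, I would invoke the fact, stated just above the theorem (Corollary 23.10 of \citet{bauschke2011convex}), that for any $\gamma>0$ and monotone $F$ the operator $F_{\text{PP},\gamma}$ is $\nicefrac{2}{\gamma}$-cocoercive. Specializing to $\gamma_{\text{PP}} = \nicefrac{2}{\ell}$ gives that $F_{\text{PP},\nicefrac{2}{\ell}}$ is $\ell$-cocoercive. Along the way I would note that the implicit resolvent equation $y = x - \nicefrac{2}{\ell}\, F(y)$ has a unique solution for every $x$ (monotonicity of $F$ makes $\Id + \nicefrac{2}{\ell} F$ strictly monotone, hence injective, and the Minty/Browder surjectivity argument, or equivalently the nonexpansiveness of the resolvent, gives existence), so $\widehat{x}^K$ is well-defined. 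The same reasoning shows $F_{\text{PP},\nicefrac{2}{\ell}}(x) = 0$ if and only if $x = x^\ast$, so the two operators share the same zero set.

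Second, writing out \eqref{eq:PP_update_v2} as
\begin{equation*}
x^{k+1} = x^k - \gamma\, F_{\text{PP},\nicefrac{2}{\ell}}(x^k),
\end{equation*}
makes it literally \algname{GD} with step size $\gamma$ applied to the $\ell$-cocoercive operator $F_{\text{PP},\nicefrac{2}{\ell}}$. The hypothesis $0 < \gamma \le \nicefrac{1}{\ell}$ matches the step-size requirement of Theorem~\ref{thm:last_iter_conv_GD}, so Theorem~\ref{thm:last_iter_conv_GD} applied with operator $F_{\text{PP},\nicefrac{2}{\ell}}$ and cocoercivity constant $\ell$ yields
\begin{equation*}
\bigl\|F_{\text{PP},\nicefrac{2}{\ell}}(x^K)\bigr\|^2 \le \frac{\ell \|x^0 - x^\ast\|^2}{\gamma(K+1)}.
\end{equation*}

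Third, I would translate this back to $F$: by definition of the resolvent, $F_{\text{PP},\nicefrac{2}{\ell}}(x^K) = F(\widehat{x}^K)$, where $\widehat{x}^K$ is the unique point with $\widehat{x}^K = x^K - \nicefrac{2}{\ell}\, F(\widehat{x}^K)$. Plugging in gives \eqref{eq:last_iter_conv_PP}. There is no real obstacle here — the whole argument is bookkeeping around the observation that \eqref{eq:PP_update_v2} is simply \algname{GD} on a cocoercive auxiliary operator — and the only care needed is the (standard) well-posedness of the resolvent and the identification of its zero with $x^\ast$, both of which follow from monotonicity of $F$.
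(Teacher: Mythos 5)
Your proposal is correct and follows exactly the paper's own route: cite the $\nicefrac{2}{\gamma}$-cocoercivity of $F_{\text{PP},\gamma}$ (Lemma~\ref{lem:PP_cocoercive}) with $\gamma = \nicefrac{2}{\ell}$, apply Theorem~\ref{thm:last_iter_conv_GD} to the resulting \algname{GD} iteration, and use $F_{\text{PP},\nicefrac{2}{\ell}}(x^K) = F(\widehat{x}^K)$ to translate the bound. The extra remarks on well-posedness of the resolvent via Minty's theorem are a welcome (if standard) addition that the paper leaves implicit.
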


\section{EXTRAGRADIENT METHOD}\label{sec:EG}
Inspired by the result on cocoercivity of $F_{\text{PP},\gamma}$, we study Extragradient method (\algname{EG}) through the lens of cocoercivity. Indeed, \algname{EG} can be seen as a practical approximation of \algname{PP} \citep{mokhtari2019proximal, mokhtari2020unified}. Therefore, we consider operator $F_{\algname{EG}, \gamma} = F\left(\Id - \gamma F\right)$ defining the update of \algname{EG}:
\begin{equation}
    x^{k+1} = x^k - \gamma \underbrace{F\left(x^k - \gamma F(x^k)\right)}_{F_{\algname{EG}, \gamma}(x^k)}. \tag{EG}\label{eq:EG_update}
\end{equation}

\paragraph{Affine case.} We start with the case when $F(x)$ is affine, i.e., it can be written as $F(x) = \mA x + b$ for some $\mA \in \R^{d\times d}, b\in \R^d$. In Appendix~\ref{appendix:cocoercive_EG_affine}, we show that $F_{\algname{EG}, \gamma} = F\left(\Id - \gamma F\right)$ is $\nicefrac{2}{\gamma}$-cocoercive for $0 < \gamma \le \nicefrac{1}{L}$. Therefore, applying Theorem~\ref{thm:last_iter_conv_GD} to the method
\begin{equation}
    x^{k+1} = x^k - \gamma_2 F_{\algname{EG}, \gamma_1}(x^k) \tag{EG-$\gamma_1$-$\gamma_2$}, \label{eq:EG_gamma_1_gamma_2}
\end{equation}
one can derive $\cO(\nicefrac{1}{K})$ last-iterate convergence rate in this case (see Appendix~\ref{appendix:last_iter_EG_affine}).

\paragraph{Random-iterate guarantees for \algname{EG}.} Motivated by the positive results on the cocoercivity of $F_{\algname{EG},\gamma}$ in the affine case, below we make an attempt to generalize this approach to the general case. The first result establishes star-cocoercivity of extragradient operator $F_{\algname{EG}, \gamma}$ for any star-monotone and Lipschitz operator $F$.

\begin{lemma}[Star-cocoercivity of extragradient operator]\label{lem:EG_star_cocoercive}
    Let $F:\R^d \to \R^d$ be star-monotone around $x^*$, i.e., $F(x^*) = 0$ and
    \begin{equation}
        \forall x\in\R^d \quad \langle F(x), x - x^* \rangle \ge 0, \label{eq:star_monotonicity_def}
    \end{equation}
    and $L$-Lipschitz. Then, operator $F_{\algname{EG},\gamma} = F\left(\Id - \gamma F\right)$ with $\gamma \le \nicefrac{1}{L}$ is $\nicefrac{2}{\gamma}$-star-cocoercive around $x^*$.
\end{lemma}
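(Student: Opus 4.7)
The plan is to rewrite the inner product $\langle F_{\algname{EG},\gamma}(x), x-x^*\rangle$ by splitting $x-x^*$ into a part that is handled by star-monotonicity and a part that is handled by Lipschitzness, and then use the polarization identity to extract $\|F_{\algname{EG},\gamma}(x)\|^2$.

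Concretely, let $y := x-\gamma F(x)$, so that $F_{\algname{EG},\gamma}(x)=F(y)$. First I would write
\[
x - x^* \;=\; (y - x^*) + \gamma F(x),
\]
which gives
\[
\langle F(y), x-x^*\rangle \;=\; \langle F(y), y-x^*\rangle + \gamma\,\langle F(y), F(x)\rangle.
\]
The first term on the right-hand side is nonnegative by the star-monotonicity assumption \eqref{eq:star_monotonicity_def} applied at the point $y$, so the task reduces to lower-bounding $\langle F(y), F(x)\rangle$ by $\tfrac12\|F(y)\|^2$.

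Next I would use the polarization identity
\[
2\langle F(y), F(x)\rangle \;=\; \|F(y)\|^2 + \|F(x)\|^2 - \|F(x)-F(y)\|^2,
\]
and bound $\|F(x)-F(y)\|^2$ via $L$-Lipschitzness of $F$: since $x-y=\gamma F(x)$, we get $\|F(x)-F(y)\|\le L\gamma\|F(x)\|\le \|F(x)\|$ under the stepsize choice $\gamma\le 1/L$. Hence $\|F(x)\|^2-\|F(x)-F(y)\|^2\ge 0$, which yields $\langle F(y), F(x)\rangle\ge \tfrac12\|F(y)\|^2$. Plugging back,
\[
\langle F(y), x-x^*\rangle \;\ge\; \frac{\gamma}{2}\|F(y)\|^2,
\]
which is exactly the $\nicefrac{2}{\gamma}$-star-cocoercivity of $F_{\algname{EG},\gamma}$ around $x^*$.

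The only nontrivial step is recognizing how to combine the polarization identity with the Lipschitz bound; once one notices that $x-y=\gamma F(x)$ makes the stepsize condition $\gamma\le 1/L$ exactly compensate the $\|F(x)-F(y)\|^2$ loss, the rest is a line of algebra. There is no obstacle from boundedness or from the operator being defined implicitly, since $F_{\algname{EG},\gamma}$ is an explicit composition. The argument also makes it transparent why star-monotonicity (rather than full monotonicity) suffices: it is only invoked at the pair $(y,x^*)$, not at arbitrary pairs of points.
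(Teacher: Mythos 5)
Your proof is correct and is essentially the paper's argument in a different wrapper: the paper first reduces star-cocoercivity to non-expansiveness of $\Id - \gamma F_{\algname{EG},\gamma}$ around $x^*$ (via Lemma~\ref{lem:l_star_non_exp_and_star_cocoercive}) and then runs exactly your computation --- the decomposition $x - x^* = (\tx - x^*) + \gamma F(x)$, star-monotonicity at the extrapolated point, the polarization identity, and the Lipschitz bound $\|F(\tx)-F(x)\|\le L\gamma\|F(x)\|$ --- whereas you verify the cocoercivity inequality directly. The only (trivial) item to add is the remark that $F_{\algname{EG},\gamma}(x^*) = F(x^* - \gamma F(x^*)) = F(x^*) = 0$, which the definition of star-cocoercivity requires.
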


Therefore, applying Theorem~\ref{thm:random_iter_conv_GD} to \eqref{eq:EG_gamma_1_gamma_2} we get the following result.
\begin{theorem}[Random-iterate convergence of \eqref{eq:EG_gamma_1_gamma_2}: non-linear case]\label{thm:EG_random_iter_conv}
    Let $F: \R^d \to \R^d$ be star-monotone around $x^*$ and $L$-Lipschitz, $0 < \gamma_2 \le \nicefrac{\gamma_1}{2}$, $0 < \gamma_1 \le \nicefrac{1}{L}$. Then for all $K\ge 0$ we have
    \begin{equation}
        \EE\|F(\widehat{x}^{K})\|^2 \le \frac{2\|x^0 - x^*\|^2}{\gamma_1 \gamma_2(K+1)}, \label{eq:random_iter_conv_EG}
    \end{equation}
    where $\widehat{x}^{K} = \tx^K - \gamma_1 F(\tx^K)$ and $\tx^K$ is chosen uniformly at random from the set of iterates $\{x^0, x^1, \ldots, x^K\}$ produced by \eqref{eq:EG_gamma_1_gamma_2}.
\end{theorem}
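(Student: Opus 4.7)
The plan is to observe that \eqref{eq:EG_gamma_1_gamma_2} is literally \eqref{eq:GD_update} applied to the extragradient operator $F_{\algname{EG}, \gamma_1}$, and then invoke the star-cocoercivity of this operator (Lemma~\ref{lem:EG_star_cocoercive}) together with the random-iterate guarantee for \algname{GD} under star-cocoercivity (Theorem~\ref{thm:random_iter_conv_GD}). Concretely, the update rule of \eqref{eq:EG_gamma_1_gamma_2} reads $x^{k+1} = x^k - \gamma_2 F_{\algname{EG}, \gamma_1}(x^k)$, which is exactly \algname{GD} with stepsize $\gamma_2$ and driving operator $F_{\algname{EG}, \gamma_1}$.

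First I would apply Lemma~\ref{lem:EG_star_cocoercive}: since $F$ is star-monotone around $x^*$ and $L$-Lipschitz, and since $\gamma_1 \le \nicefrac{1}{L}$, the operator $F_{\algname{EG}, \gamma_1} = F(\Id - \gamma_1 F)$ is $\ell$-star-cocoercive around $x^*$ with $\ell = \nicefrac{2}{\gamma_1}$. In particular $F_{\algname{EG},\gamma_1}(x^*) = F(x^* - \gamma_1 F(x^*)) = F(x^*) = 0$, so $x^*$ is a fixed point of the \algname{GD} update. Next I would check that the stepsize $\gamma_2$ satisfies the hypothesis of Theorem~\ref{thm:random_iter_conv_GD} for the operator $F_{\algname{EG},\gamma_1}$: we need $0 < \gamma_2 \le \nicefrac{1}{\ell} = \nicefrac{\gamma_1}{2}$, which is exactly the standing assumption.

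Applying Theorem~\ref{thm:random_iter_conv_GD} to $F_{\algname{EG},\gamma_1}$ with stepsize $\gamma_2$ then gives
\begin{equation*}
    \EE\bigl\|F_{\algname{EG},\gamma_1}(\tx^K)\bigr\|^2 \;\le\; \frac{(\nicefrac{2}{\gamma_1})\,\|x^0 - x^*\|^2}{\gamma_2(K+1)} \;=\; \frac{2\,\|x^0 - x^*\|^2}{\gamma_1\gamma_2(K+1)},
\end{equation*}
where $\tx^K$ is drawn uniformly from $\{x^0,\ldots,x^K\}$. Finally, by the very definition of $F_{\algname{EG},\gamma_1}$ and of $\hx^K$,
\begin{equation*}
    F_{\algname{EG},\gamma_1}(\tx^K) \;=\; F\bigl(\tx^K - \gamma_1 F(\tx^K)\bigr) \;=\; F(\hx^K),
\end{equation*}
so the bound \eqref{eq:random_iter_conv_EG} follows immediately.

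There is essentially no obstacle: the real work has been absorbed into Lemma~\ref{lem:EG_star_cocoercive} and Theorem~\ref{thm:random_iter_conv_GD}, and all that remains is to check that the stepsize conditions match up and to unfold the definitions of $F_{\algname{EG},\gamma_1}$ and $\hx^K$. The only thing worth double-checking is the translation of stepsize constants, which is why the assumption $\gamma_2 \le \nicefrac{\gamma_1}{2}$ appears precisely in the form needed to match $\ell = \nicefrac{2}{\gamma_1}$.
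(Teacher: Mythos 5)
Your proposal is correct and is exactly the paper's argument: the paper derives this theorem by noting that \eqref{eq:EG_gamma_1_gamma_2} is \algname{GD} applied to $F_{\algname{EG},\gamma_1}$, invoking Lemma~\ref{lem:EG_star_cocoercive} for $\nicefrac{2}{\gamma_1}$-star-cocoercivity, and then applying Theorem~\ref{thm:random_iter_conv_GD} with $\ell = \nicefrac{2}{\gamma_1}$ and stepsize $\gamma_2 \le \nicefrac{1}{\ell}$. Your bookkeeping of the stepsize constants and the identification $F_{\algname{EG},\gamma_1}(\tx^K) = F(\widehat{x}^K)$ are both right.
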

We notice that the result is derived under star-monotonicity of $F$ that can hold even for non-monotone $F$ \citep{loizou2021stochastic}.

\paragraph{Non-cocoercivity of \algname{EG} operator.} Taking into account all positive results observed in the previous sections, it is natural to expect that $F_{\algname{EG},\gamma}$ is cocoercive when $F$ is monotone and $L$-Lipschitz and $\gamma$ is sufficiently small. Surprisingly, this is not true in general: $F_{\algname{EG},\gamma}$ can be non-cocoercive for monotone and Lipschitz (and even cocoercive\footnote{Cocoercivity of $F$ implies its monotonicity and Lipschitzness.}) $F$!


In view of Lemma~\ref{lem:l_non_exp_and_cocoercive}, it is sufficient to show that for any $\ell > 0$ and any $\gamma_1, \gamma_2 > 0$ there exists $\ell$-cocoercive operator $F$ such that operator $\Id - \gamma_2 F_{\algname{EG}, \gamma_1}$ is not non-expansive. In other words, our goal is to show that for all $\ell,\gamma_1, \gamma_2 > 0$ the quantity
\begin{eqnarray}
    \rho_{\algname{EG}}(\ell,\gamma_1,\gamma_2) = &\max & \frac{\|\widehat{x} - \widehat{y}\|^2}{\|x-y\|^2} \label{eq:EG_exp_problem}\\
    &\text{s.t.} & F \text{ is $\ell$-cocoercive},\notag \\
    &&x,y\in\R^d,\; x\neq y,\notag\\
    && \widehat{x} = x - \gamma_2 F(x - \gamma_1 F(x)), \notag\\
    && \widehat{y} = y - \gamma_2 F(y - \gamma_1 F(y)) \notag
\end{eqnarray}
is bigger than $1$, i.e., $\rho_{\algname{EG}}(\ell,\gamma_1,\gamma_2) > 1$. In the above problem, maximization is performed on \emph{the set of all $\ell$-cocoercive operators} and pairs of vectors $x,y \in \R^d$, i.e., one needs to solve infinitely dimensional problem. In such form, it is computationally infeasible.

\begin{figure}[t]
    \centering
    \includegraphics[width=0.4\textwidth]{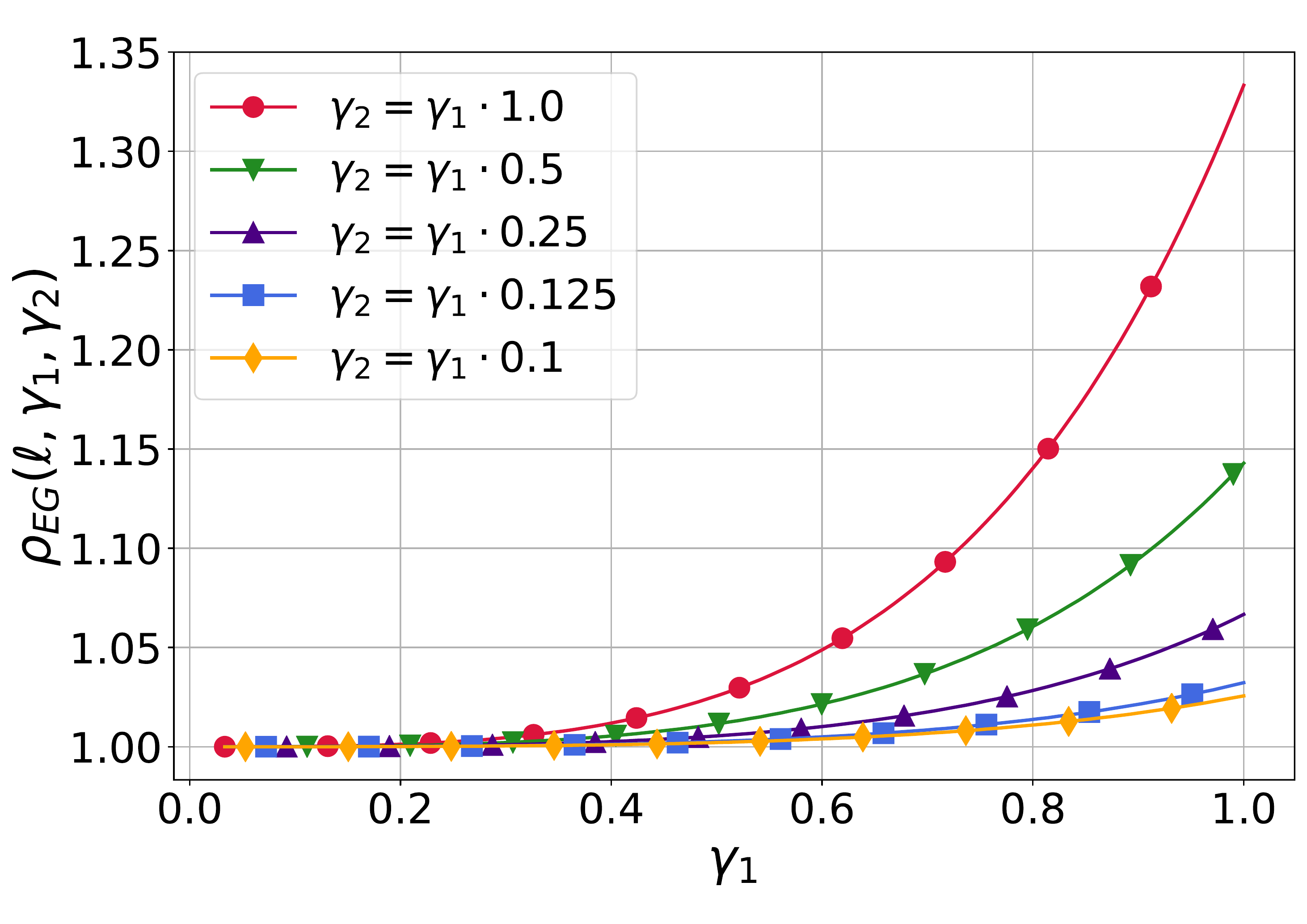}
    \caption{\small
    Numerical estimation of $\rho_{\algname{EG}}(\ell,\gamma_1,\gamma_2)$ defined in \eqref{eq:EG_exp_problem} for $\ell = 1$ and different $\gamma_1,\gamma_2$.}
    \label{fig:EG_expansiveness}
\end{figure}

Fortunately, there exists an equivalent SDP that can be solved efficiently. To construct such a problem, we follow Performance Estimation Problem (PEP) technique from \citet{ryu2020operator} and rewrite \eqref{eq:EG_exp_problem} as
\begin{eqnarray}
    &\max & \frac{\|x - \gamma_2 x_{F_2} - y + \gamma_2 y_{F_2}\|^2}{\|x-y\|^2} \label{eq:EG_exp_problem_1}\\
    &\text{s.t.} & x,y, x_{F_1}, y_{F_1}, x_{F_2}, y_{F_2}\in\R^d,\; x\neq y,\notag\\&&\exists F \text{ is $\ell$-cocoercive}:\notag\\
    && x_{F_2} = F(x - \gamma_1 x_{F_1}),\; x_{F_1} = F(x),\notag\\
    && y_{F_2} = F(y - \gamma_1 y_{F_1}),\; y_{F_1} = F(y).\notag
\end{eqnarray}
Problem \eqref{eq:EG_exp_problem_1} is finite-dimensional and equivalent to \eqref{eq:EG_exp_problem}. Next, for all $\alpha > 0$ the following equivalence holds: $F \text{ is $\ell$-cocoercive} \; \Longleftrightarrow \; \left(\alpha^{-1}\Id\right)\circ F \circ \left(\alpha \Id\right) \text{ is $\ell$-cocoercive}.$ Therefore, in problem \eqref{eq:EG_exp_problem_1} one can apply the change of variables $x := \alpha^{-1}x, y:= \alpha^{-1}y, x_{F_1} := \alpha^{-1}x_{F_1}, y_{F_1} := \alpha^{-1} y_{F_1}, x_{F_2} := \alpha^{-1} x_{F_2}, y_{F_2} := \alpha^{-1} y_{F_2}, F:= \left(\alpha^{-1}\Id\right)\circ F \circ \left(\alpha \Id\right),$ where  $\alpha = \|x-y\|$ and get another equivalent problem: 
\begin{eqnarray}
    &\max & \|x - \gamma_2 x_{F_2} - y + \gamma_2 y_{F_2}\|^2 \label{eq:EG_exp_problem_2}\\
    &\text{s.t.} & x,y, x_{F_1}, y_{F_1}, x_{F_2}, y_{F_2}\in\R^d,\; x\neq y,\notag\\
    &&\|x-y\|^2 = 1 \text{ and }\exists F \text{ is $\ell$-cocoercive}:\notag\\
    && x_{F_2} = F(x - \gamma_1 x_{F_1}),\; x_{F_1} = F(x),\notag\\
    && y_{F_2} = F(y - \gamma_1 y_{F_1}),\; y_{F_1} = F(y).\notag
\end{eqnarray}

However, the constraints are defined implicitly via the existence of $\ell$-cocoercive operator $F$ that interpolates the introduced points. Therefore, the problem is still hard to solve. It turns out (Proposition~2 from \citet{ryu2020operator}) that the constraints about the existence of $\ell$-cocoercive operator are equivalent to the \emph{finite set of inequalities}. These inequalities are called \emph{interpolation conditions} and, essentially, it is inequality \eqref{eq:l_cocoercivity} written for all pairs of points that $F$ has to interpolate. That is, \eqref{eq:EG_exp_problem_2} is equivalent to the following problem:
\begin{eqnarray}
    &\max & \|x - \gamma_2 x_{F_2} - y + \gamma_2 y_{F_2}\|^2 \label{eq:EG_exp_problem_3}\\
    &\text{s.t.} & x,y,x_{F_1}, y_{F_1}, x_{F_2}, y_{F_2}\in\R^d,\; \|x-y\|^2 = 1,\notag\\
    && \ell\langle x_{F_1} - x_{F_2}, \gamma_1 x_{F_1} \rangle \ge \|x_{F_1} - x_{F_2}\|^2,\notag\\
    && \ell\langle x_{F_1} - y_{F_1}, x-y \rangle \ge  \|x_{F_1} - y_{F_1}\|^2,\notag\\
    && \ell\langle x_{F_1} - y_{F_2}, x-y + \gamma_1 y_{F_1}\rangle \ge  \|x_{F_1} - y_{F_2}\|^2,\notag\\
    && \ell\langle x_{F_2} - y_{F_1}, x - \gamma_1 x_{F_1} - y\rangle \ge  \|x_{F_2} - y_{F_1}\|^2,\notag\\
    && \ell\langle x_{F_2} - y_{F_2}, x - \gamma_1 x_{F_1} - y + \gamma_1 y_{F_1}\rangle\notag\\
    &&\hspace{4cm}\ge  \|x_{F_2} - y_{F_2}\|^2, \notag\\
    && \ell\langle y_{F_1} - y_{F_2}, \gamma_1 y_{F_1}\rangle \ge  \|y_{F_1} - y_{F_2}\|^2. \notag
\end{eqnarray}
Although the above representation is much better for numerical solving than \eqref{eq:EG_exp_problem}, we do not stop here and consider a Grammian representation of $\mU = (x,y,x_{F_1}, y_{F_1}, x_{F_2}, y_{F_2})^\top$: $\mG = \mU^\top \mU$. 
One can easily show that for all $d \ge 6$ we have $\mG \in \mathbb{S}_{+}^6$ iff there exist $x,y,x_{F_1}, y_{F_1}, x_{F_2}, y_{F_2} \in \R^d$ such that $\mG$ is Gram matrix for these vectors. Since the objective and constrainsts of \eqref{eq:EG_exp_problem_3} are linear in the entries of matrix $\mG$, problem \eqref{eq:EG_exp_problem_3} is equivalent to the following SDP problem:
\begin{eqnarray}
    &\max & \Tr(\mM_0 \mG) \label{eq:EG_exp_problem_4}\\
    &\text{s.t.} & \mG\in\mathbb{S}_{+}^6,\notag\\
    && \Tr(\mM_i \mG) \ge 0,\; i = 1,2,\ldots,6,\notag\\
    && \Tr(\mM_7 \mG) = 1, \notag
\end{eqnarray}
where $\mM_0,\ldots, \mM_7$ are some symmetric matrices (see the details in Appendix~\ref{sec:details_PEP_EG_non_cocoercive}). 
For any given $\ell,\gamma_1,\gamma_2 > 0$ this problem can be easily solved numerically using PESTO \citep{taylor2017performance}. Therefore, to compute the expansiveness parameter $\rho_{\algname{EG}}(\ell,\gamma_1,\gamma_2)$ of \algname{EG} we solved \eqref{eq:EG_exp_problem_4} for $\ell = 1$ and different values of $\gamma_1,\gamma_2$. The results are reported in Figure~\ref{fig:EG_expansiveness}.

Although these numerical results show that $F_{\algname{EG},\gamma_1}$ can be non-$\nicefrac{2}{\gamma_2}$-cocoercive for different values of $\gamma_1, \gamma_2$, and $\ell = 1$, it is not a rigorous proof that for any $\ell$ and $\gamma_1, \gamma_2 \in (0,\nicefrac{1}{\ell}]$ there exists $\ell$-cocoercive operator $F$ such that $F_{\algname{EG},\gamma_1}$ is not $\nicefrac{2}{\gamma_2}$-cocoercive. Nevertheless, one can utilize numerical results to construct a rigorous proof but it requires to change the problem \eqref{eq:EG_exp_problem_4}.

The main difficulty is that the solution of \eqref{eq:EG_exp_problem_4} is at least of rank $5$ in our experiments. It means, that the dimension of the space where the counter-example $F$ is defined is also at least $5$ complicates the visualization of the solution\footnote{We also tried to solve this problem symbolically, but the problem turned out to be computationally infeasible for standard symbolic solvers. Therefore, we focused on the visualization of the solutions in the hope of finding useful dependencies between the solution and parameters $\ell, \gamma_1, \gamma_2$.}. To overcome this issue, we consider another problem with so-called Log-det heuristic \citep{fazel2003log}:
\begin{eqnarray}
    &\min & \log\det\left(\mG + \delta\mI\right) \label{eq:EG_exp_problem_5}\\
    &\text{s.t.} & \mG\in\mathbb{S}_{+}^6,\notag\\
    && \Tr(\mM_0 \mG) \ge 1.0005, \notag\\
    && \Tr(\mM_i \mG) \ge 0,\; i = 1,2,\ldots,6,\notag\\
    && \Tr(\mM_7 \mG) = 1, \notag
\end{eqnarray}
where $\delta > 0$ is some small positive regularization parameter. For $\gamma_1, \gamma_2, \ell$ in some intervals, the solution of the new problem also provides an example of $x, y$ and operator $F$ that proves non-$\nicefrac{2}{\gamma_2}$-cocoercivity of $F_{\algname{EG},\gamma_1}$: we ensure this via the constraint $\Tr(\mM_0 \mG) \ge a = 1.0005 > 1$. In theory, any $a > 1$ can be used but due to the inevitability of the numerical errors in practice we used $a = 1.0005$. However, due to the change of the objective the solution may have lower rank since $\log\det\left(\mG + \delta\mI\right)$ can be seen as an differentiable approximation of the rank of $\mG$.

\begin{figure*}[t]
    \centering
    \includegraphics[width=0.4\textwidth]{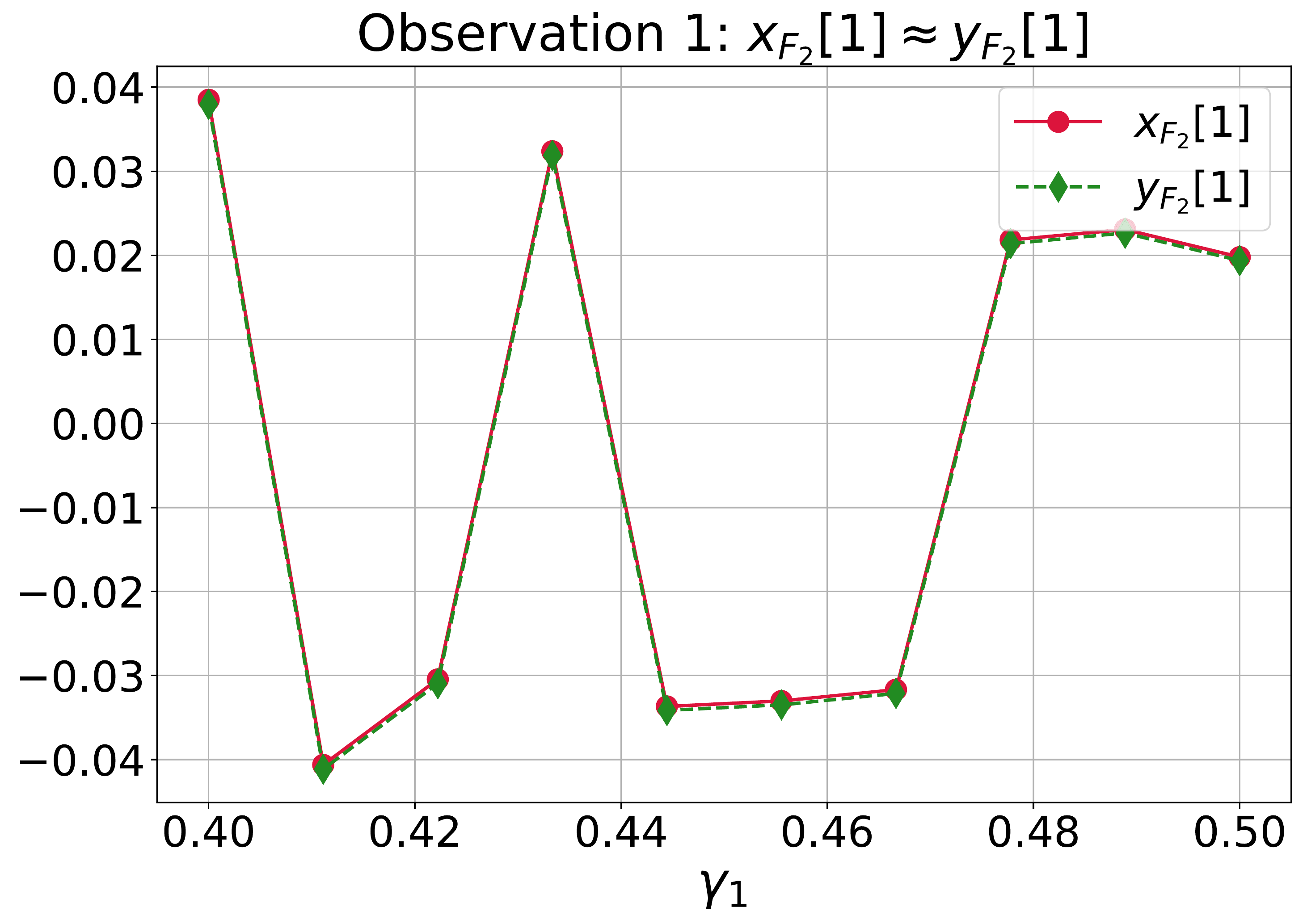}\qquad 
    \includegraphics[width=0.4\textwidth]{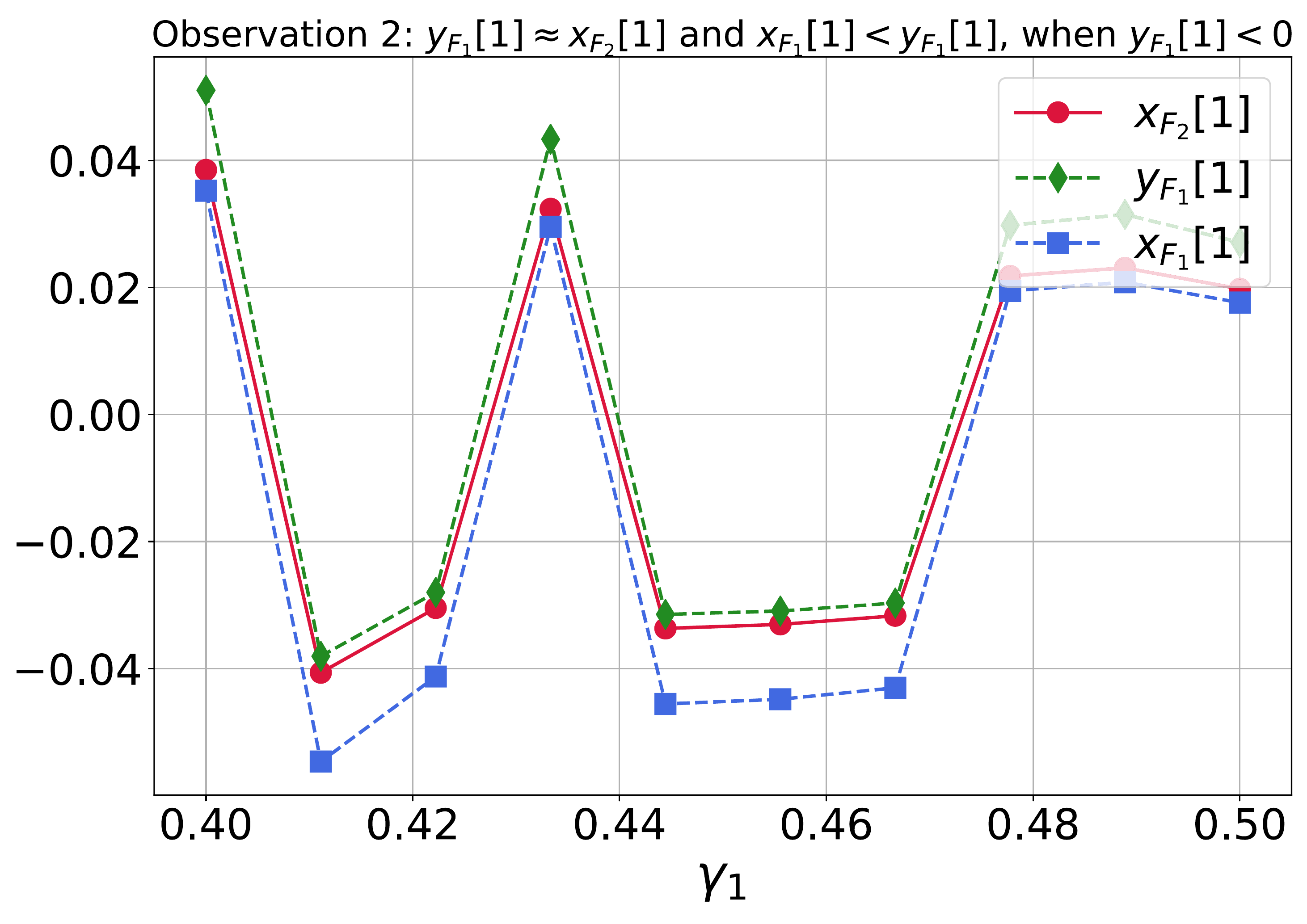}
    \includegraphics[width=0.4\textwidth]{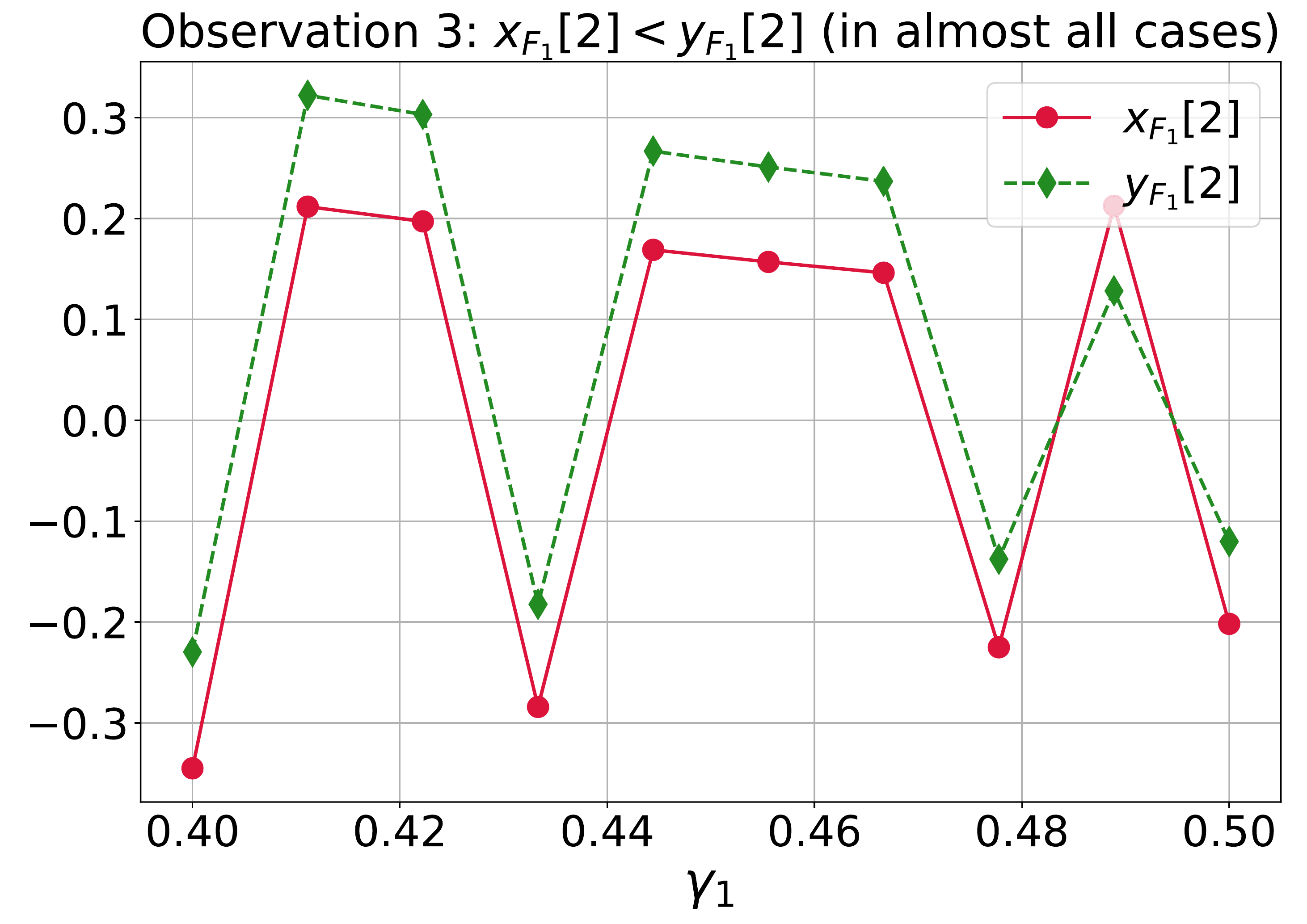}\qquad 
    \includegraphics[width=0.4\textwidth]{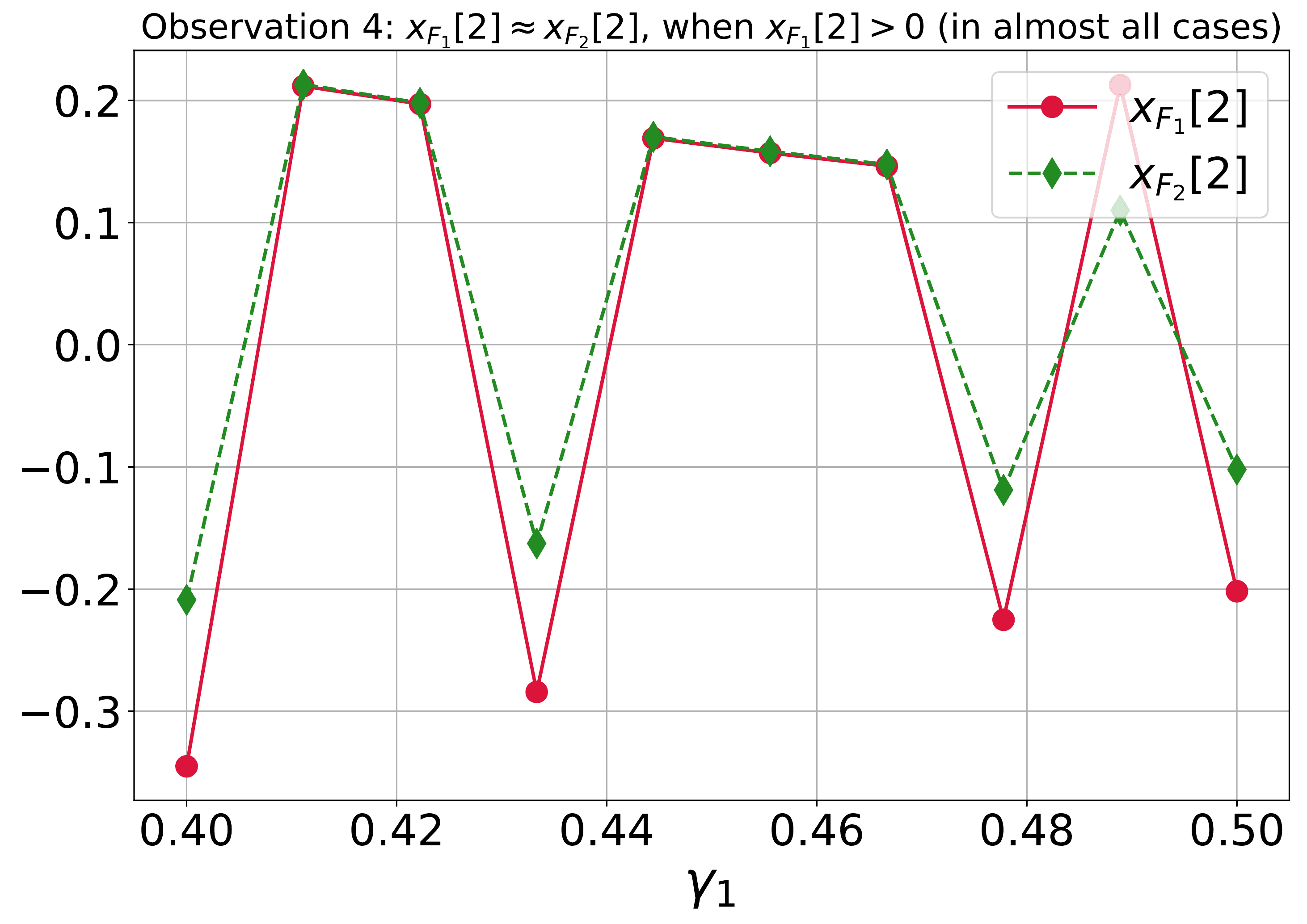}
    \caption{\small
    Observations that we made after plotting the components of $x, y, x_{F_1}, y_{F_1}, x_{F_2}, y_{F_2}$ for $\ell = 1$ and different values of $\gamma_1$ (we used $\gamma_2 = \gamma_1$).}
    \label{fig:EG_observations}
\end{figure*}

Solving problem \eqref{eq:EG_exp_problem_5} for $\ell = 1$, and $\gamma_1 = \gamma_2$, we obtained the solutions of rank $2$, i.e., we obtained $x, y, x_{F_1}, y_{F_1}, x_{F_2}, y_{F_2}$ in $\R^2$. We observed that $x = -y$ for all tested values of $\gamma_1$. However, numerical solutions were not consistent enough to guess the right dependencies. To overcome this issue, we rotated $x, y, x_{F_1}, y_{F_1}, x_{F_2}, y_{F_2}$ in such a way that $x = (-\nicefrac{1}{2}, 0)^\top$, $y = (\nicefrac{1}{2}, 0)^\top$, and plotted the components of $x_{F_1}, y_{F_1}, x_{F_2}, y_{F_2}$ for different $\gamma_1$. Although the resulting dependencies were not perfect, the obtained plots helped us to sequentially construct the needed example:
\begin{gather}
    x = -y = \begin{pmatrix} -\frac{1}{2}\\ 0 \end{pmatrix}\!,\; x_{F_1} = \begin{pmatrix} -\frac{1}{2\gamma_1}\\ \frac{1}{2\gamma_1} \end{pmatrix}\!,\; y_{F_1} = \begin{pmatrix} -\frac{1-\gamma_1\ell}{2\gamma_1}\\ \frac{1+\gamma_1\ell}{2\gamma_1} \end{pmatrix}\!,\notag \\ x_{F_2} = \begin{pmatrix} -\frac{1-\gamma_1\ell}{2\gamma_1}\\ \frac{1}{2\gamma_1} \end{pmatrix}\!,\; y_{F_2} = \begin{pmatrix} -\frac{1-\gamma_1\ell}{2\gamma_1}\\ \frac{1-\gamma_1^2\ell^2}{2\gamma_1} \end{pmatrix}.\label{eq:EG_bad_example}
\end{gather}

That is, via plotting the components of $x, y, x_{F_1}, y_{F_1}, x_{F_2}, y_{F_2}$ we observed $4$ interesting dependencies, see Figure~\ref{fig:EG_observations}. Mimicking these dependencies, we assumed that
\begin{gather*}
    x_{F_2}[1] = y_{F_2}[1],\\
    y_{F_1}[1] = x_{F_2}[1]\quad \text{and}\quad x_{F_1}[1] < y_{F_1}[1] < 0,\\
    0 < x_{F_1}[2] < y_{F_1}[2],\quad x_{F_1}[2] = x_{F_2}[2],
\end{gather*}
plugged these relations in the interpolation conditions from \eqref{eq:EG_exp_problem_3}, and obtained the following inequalities:
\begin{gather*}
    y_{F_1}[1] \leq (1 - \gamma_1)x_{F_1}[1],\quad y_{F_1}[2] \leq \frac{y_{F_2}[2]}{1-\gamma_1},\\
    y_{F_1}[2] \leq (1+\gamma_1)x_{F_2}[2],\quad x_{F_2}[2] \leq \frac{y_{F_2}[2]}{1-\gamma_1^2}
\end{gather*}
To fulfill these constraints, we simply assumed that they hold as equalities and got:
\begin{gather*}
    x_{F_2}[2] = x_{F_1}[2] = \frac{y_{F_2}[2]}{1-\gamma_1^2},\quad y_{F_1}[2] = \frac{y_{F_2}[2]}{1-\gamma_1},\\\
    y_{F_1}[1] = x_{F_2}[1] = y_{F_2}[1] = (1 - \gamma_1)x_{F_1}[1].
\end{gather*}
Using these dependencies in the remaining interpolation conditions, we derived $x_{F_1}[1] + \gamma_1 (x_{F_1}[1])^2 + \frac{\gamma_1 (y_{F_2}[2])^2}{(1-\gamma_1^2)^2} \leq 0.$ After that, we assumed that $y_{F_2}[2] = - x_{F_1}[1](1-\gamma_1^2).$ Together with previous inequality it gives $x_{F_1}[1] + 2\gamma_1 (x_{F_1}[1])^2 \leq 0.$ Next, we chose $x_{F_1} = -\nicefrac{1}{2\gamma_1}$ and put it in all previously derived dependencies. Finally, we generalized the example to the case of non-unit $\ell$ using ``physical-dimension'' arguments and got \eqref{eq:EG_bad_example}.

These derivations lead to the following result that we rigorously prove in Appendix~\ref{sec:details_PEP_EG_non_cocoercive}.

\begin{theorem}\label{thm:EG_bad_example}
    For all $\ell > 0$ and $\gamma_1 \in (0,\nicefrac{1}{\ell}]$ there exists $\ell$-cocoercive operator $F$ such that $F(x) = x_{F_1}, F(y) = y_{F_1}, F(x-\gamma_1 x_{F_1}) = x_{F_2}, F(y - \gamma_1 y_{F_1}) = y_{F_2}$ for $x, y, x_{F_1}, y_{F_1}, x_{F_2}, y_{F_2}$ defined in \eqref{eq:EG_bad_example} and 
    \begin{equation}
        \|x - \gamma_2 F(x - \gamma_1 F(x)) - y + \gamma_2 F(y - \gamma_1 F(y))\| > 1 \label{eq:EG_is_non_cocoercive}
    \end{equation}
    for all $\gamma_2 > 0$, i.e., $F_{\algname{EG},\gamma_1} = F(\Id - \gamma_1F)$ is non-cocoercive.
\end{theorem}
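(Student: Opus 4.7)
The plan is to apply the interpolation theorem for $\ell$-cocoercive operators (Proposition~2 in \citet{ryu2020operator}), which states that a finite family of input--output pairs $\{(p_i,g_i)\}_{i=1}^N \subset \R^d\times\R^d$ extends to an $\ell$-cocoercive operator on all of $\R^d$ if and only if the pairwise inequality $\|g_i-g_j\|^2 \le \ell\langle g_i-g_j,\, p_i-p_j\rangle$ holds for every $i,j$. I would take as anchors the four points $p_1=x$, $p_2=y$, $p_3=x-\gamma_1 x_{F_1}$, $p_4=y-\gamma_1 y_{F_1}$ with assigned values $g_1=x_{F_1}$, $g_2=y_{F_1}$, $g_3=x_{F_2}$, $g_4=y_{F_2}$ from \eqref{eq:EG_bad_example}. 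By the very choice of $p_3$ and $p_4$, any $\ell$-cocoercive extension $F$ produced this way automatically satisfies $F(x)=x_{F_1}$, $F(y)=y_{F_1}$, $F(x-\gamma_1 F(x))=x_{F_2}$, and $F(y-\gamma_1 F(y))=y_{F_2}$, i.e., the structural identities demanded in the theorem statement.

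The main bulk of the proof is the verification of the six pairwise cocoercivity inequalities (the ones already written out right before \eqref{eq:EG_exp_problem_4}), which I would carry out by direct substitution in $\R^2$. The relevant differences reduce nicely:
\begin{gather*}
x_{F_1}-y_{F_1} = (-\tfrac{\ell}{2},-\tfrac{\ell}{2}),\quad x_{F_1}-x_{F_2}=(-\tfrac{\ell}{2},0),\\
x_{F_2}-y_{F_1}=(0,-\tfrac{\ell}{2}),\quad y_{F_1}-y_{F_2}=\bigl(0,\tfrac{\ell(1+\gamma_1\ell)}{2}\bigr),\\
x_{F_2}-y_{F_2}=(0,\tfrac{\gamma_1\ell^2}{2}),\quad x_{F_1}-y_{F_2}=(-\tfrac{\ell}{2},\tfrac{\gamma_1\ell^2}{2}),
\end{gather*}
and pair them with $x-y=(-1,0)$, $\gamma_1 x_{F_1}=(-\tfrac12,\tfrac12)$, $\gamma_1 y_{F_1}=(\tfrac{\gamma_1\ell-1}{2},\tfrac{1+\gamma_1\ell}{2})$, and the obvious two-point displacements. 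Five of the six inequalities close to equalities, and the sixth (the one for the pair $(p_1,p_4)$) collapses to $\ell^2+\gamma_1^2\ell^4 \le 3\ell^2+\gamma_1^2\ell^4$, which has slack. This step is tedious but mechanical and requires no case analysis.

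With the existence of the $\ell$-cocoercive extension $F$ on $\R^d$ (any $d\ge 2$) secured by the interpolation theorem, inequality \eqref{eq:EG_is_non_cocoercive} is a one-line computation: the vectors $x-y=(-1,0)$ and $x_{F_2}-y_{F_2}=(0,\gamma_1\ell^2/2)$ are orthogonal, hence
\begin{equation*}
\|x-\gamma_2 x_{F_2}-y+\gamma_2 y_{F_2}\|^2 \;=\; 1+\tfrac{\gamma_1^2\gamma_2^2\ell^4}{4} \;>\; 1
\end{equation*}
for every $\gamma_2>0$. Invoking Lemma~\ref{lem:l_non_exp_and_cocoercive} then shows that $F_{\algname{EG},\gamma_1}$ is not $(2/\gamma_2)$-cocoercive for any $\gamma_2>0$, hence not $\ell'$-cocoercive for any $\ell'>0$.

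Nothing in the plan above is a genuine obstacle; the proof is essentially a verification once the candidate \eqref{eq:EG_bad_example} is in hand. The only subtle point is that the construction lives in $\R^2$ (the ambient dimension suggested by the rank-$2$ PEP solution), and one must trust the interpolation theorem to lift the four sample values to a cocoercive operator on all of $\R^d$. As the text already emphasises, the genuinely hard work was not this verification but the discovery of \eqref{eq:EG_bad_example} itself, for which the log-det-regularised PEP together with the heuristic of saturating as many interpolation constraints as possible was the critical tool.
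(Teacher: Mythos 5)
Your proposal is correct and follows essentially the same route as the paper's proof: reduce existence of the $\ell$-cocoercive interpolant to the six pairwise interpolation inequalities via Proposition~2 of \citet{ryu2020operator}, verify them by direct substitution (five with equality, one with strict slack), and conclude via the orthogonality of $x-y$ and $x_{F_2}-y_{F_2}$ together with Lemma~\ref{lem:l_non_exp_and_cocoercive}. Your arithmetic checks out (indeed, your slack $\nicefrac{3\ell^2+\gamma_1^2\ell^4}{4}$ vs.\ $\nicefrac{\ell^2+\gamma_1^2\ell^4}{4}$ for the $(x, y-\gamma_1 y_{F_1})$ pair is the correct value; the paper's intermediate display for that constraint contains a minor sign slip, though both versions yield positive slack).
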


First of all, this result emphasizes the difference between \algname{PP} and \algname{EG}, Moreover, it also means that one cannot apply the technique from Section~\ref{sec:GD_cocoercivity} to prove last-iterate $\cO(\nicefrac{1}{K})$ convergence for \algname{EG} applied to \eqref{eq:main_problem} with monotone and $L$-Lipschitz operator $F$. However, it does not imply that one cannot prove this fact in general.

\paragraph{Last-iterate guarantees for \algname{EG}.} Inspired by the proof of non-cocoercivity of the operator $F_{\algname{EG}, \gamma}$ obtained via PEP, we apply PEP technique to find the rate of convergence in terms of $\|F(x^K)\|^2$ for $F$ being monotone and Lipschitz-continuous. That is, we consider the problem
\begin{eqnarray}
    &\max & \|F(x^K)\|^2 \label{eq:EG_norm_PEP}\\
    &\text{s.t.} & F \text{ is mon.\ and $L$-Lip.},\;x^0\in\R^d,\notag\\
    && \|x^0 - x^*\|^2 \leq 1,\notag\\
    && x^{k+1} = x^k - \gamma_2 F\left(x^k - \gamma_1 F(x^k)\right),\notag\\
    &&k = 0,1,\ldots,K-1 \notag
\end{eqnarray}
and following similar steps to what we do for showing non-cocoercivity of \algname{EG} operator, we construct a special SDP using the definitions of monotonicity \eqref{eq:monotonicity_def} and \eqref{eq:L_lip_def} as interpolation conditions. However, the resulting SDP gives just \textit{an upper bound} for the value of \eqref{eq:EG_norm_PEP} since the resulting SDP might produce such solutions that cannot be interpolated by any monotone and $L$-Lipschitz operator $F$ (see Proposition 3 from \citet{ryu2020operator}). Nevertheless, we solved the resulting SDP using PESTO \citep{taylor2017performance} for $L = 1$, $\gamma_1 = \gamma_2 = \nicefrac{1}{2L}$, and various values of $K$. We observed that the PEP answer behaves as $\cO\left(\nicefrac{1}{K}\right)$ (see Figure~\ref{fig:EG_norm}). Moreover, using standard duality theory for SDP \citep{de2006aspects} one can show that the solution of the dual problem to the SDP obtained from \eqref{eq:EG_norm_PEP} gives the proof of convergence: it is needed just to sum up the constraints with weights corresponding to the solution of the dual problem \citep{de2017worst}. The only thing that remains to do is to guess analytical form of the dual solution.

\begin{figure}[t]
    \centering
    \includegraphics[width=0.41\textwidth]{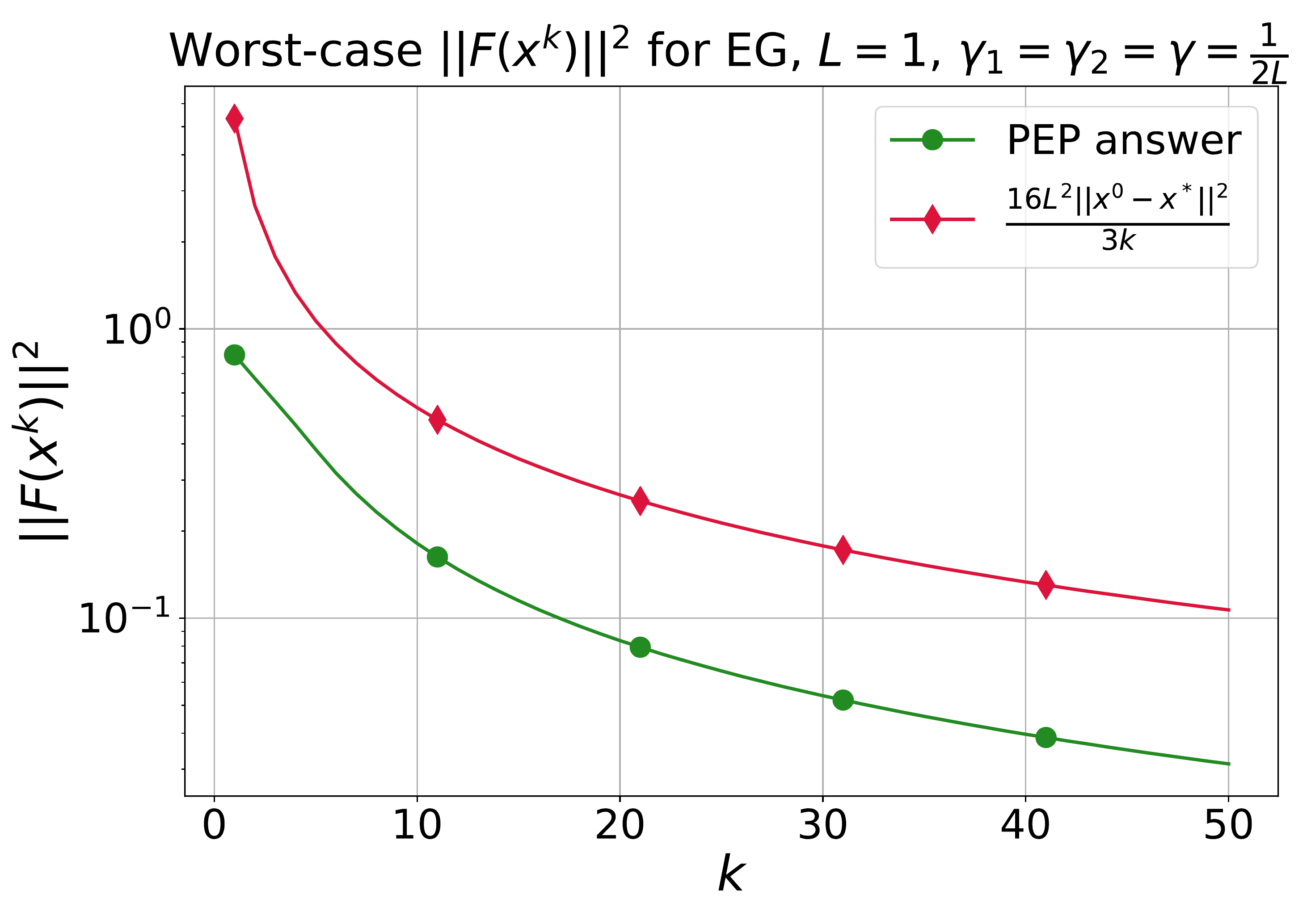}
    \caption{\small
    Comparison of the worst-case rate of \algname{EG} obtained via solving PEP and the guessed upper-bound $\nicefrac{16L^2\|x^0 - x^*\|^2}{k}$. The vertical axis is shown in logarithmic scale and after iteration $k = 20$ the curves are almost parallel, i.e., PEP answer and $\nicefrac{16L^2\|x^0 - x^*\|^2}{k}$ differ almost by a constant factor. In view of Proposition 3 from \citet{ryu2020operator}, PEP may give the answer that is not tight for the class of monotone and Lipschitz operators. However, in this particular case, it turns out to be quite tight.}
    \label{fig:EG_norm}
\end{figure}

However, it is not always an easy task: the dependencies on the parameters of the problem like $L,\gamma_1,\gamma_2$ might be quite tricky. In particular, this might happen due to inaccuracy of the obtained numerical solution and large number of constraints. Therefore, we consider a simpler problem:
\begin{eqnarray}
    \Delta_{\algname{EG}}(L,\gamma_1,\gamma_2) = &\max & \|F(x^1)\|^2 - \|F(x^0)\|^2 \label{eq:EG_norm_diff_PEP}\\
    &\text{s.t.} & F \text{ is mon.\ and $L$-Lip.},\notag\\
    && x^0\in\R^d,\; \|x^0 - x^*\|^2 \leq 1,\notag\\
    && x^{1}\! = \!x^0\! - \!\gamma_2 F\left(x^0 - \gamma_1 F(x^0)\right)\notag
\end{eqnarray}
with $\gamma_1 = \gamma_2 = \gamma$. As for \eqref{eq:EG_norm_PEP}, we construct a corresponding SDP and solve it for different values of $L$ and $\gamma$. In these numerical tests, we observed that $\Delta_{\algname{EG}}(L,\gamma_1,\gamma_2) \approx 0$ for all tested pairs of $L$ and $\gamma$ and the dual variables $\lambda_1, \lambda_2, \lambda_3$ that correspond to 3 particular constraints -- monotonicity \eqref{eq:monotonicity_def} for $(x^k, x^{k+1})$ and $(x^k-\gamma F(x^k), x^{k+1})$ and Lipschitzness for $(x^k-\gamma F(x^k), x^{k+1})$ -- are always very close to $\nicefrac{2}{\gamma}$, $\nicefrac{1}{2\gamma}$, and $\nicefrac{3}{2}$, while other dual variables are negligible. Although $\lambda_2$ and $\lambda_3$ were sometimes slightly smaller, e.g., sometimes we had $\lambda_2 \approx \nicefrac{3}{5\gamma}$ and $\lambda_3 \approx \nicefrac{13}{20}$, we simplified these dependencies and simply summed up the corresponding inequalities with weights $\lambda_1 = \nicefrac{2}{\gamma}$, $\lambda_2 = \nicefrac{1}{2\gamma}$, and $\lambda_3 = \nicefrac{3}{2}$ respectively. After that, it was just needed to rearrange the terms and apply Young's inequality to some inner products. This is how we obtained the following result (see the details in Appendix~\ref{sec:PEP_proof_of_EG_norm_relax}).

\begin{lemma}\label{lem:EG_norm_is_non_incr}
    Let $F: \R^d \to \R^d$ be monotone and $L$-Lipschitz, $0 < \gamma \le \nicefrac{1}{\sqrt{2}L}$. Then for all $k\ge 0$ the iterates produced by \eqref{eq:EG_update} satisfy $\|F(x^{k+1})\| \le \|F(x^k)\|$.
\end{lemma}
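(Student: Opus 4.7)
The plan is to follow exactly the dual-PEP recipe described just before the lemma: combine two monotonicity inequalities and one Lipschitzness inequality with weights $\lambda_1 = 2/\gamma$, $\lambda_2 = 1/(2\gamma)$, $\lambda_3 = 3/2$, and then finish with a Young's inequality step that is where the restriction $\gamma \le 1/(\sqrt{2}L)$ will appear. To make the algebra transparent, I introduce the shorthand $a := F(x^k)$, $b := F(x^k - \gamma F(x^k))$, $c := F(x^{k+1})$. With this notation, $x^{k+1} - x^k = -\gamma b$ and $x^{k+1} - (x^k - \gamma F(x^k)) = \gamma(a-b)$, so the target inequality is simply $\|c\|^2 \le \|a\|^2$.

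Next, I would explicitly write down the three weighted inequalities: (i) $\lambda_1$ times monotonicity of $F$ at the pair $(x^k, x^{k+1})$, which after substituting $x^{k+1} - x^k = -\gamma b$ collapses to $2\langle a - c, b\rangle \ge 0$; (ii) $\lambda_2$ times monotonicity at $(x^k - \gamma a, x^{k+1})$, which reduces to $\tfrac{1}{2}\langle c-b, a-b\rangle \ge 0$; and (iii) $\lambda_3$ times the Lipschitzness inequality at the same pair, giving $\tfrac{3L^2\gamma^2}{2}\|a-b\|^2 - \tfrac{3}{2}\|c-b\|^2 \ge 0$. Summing the three and expanding the inner products yields a single quadratic inequality in $a,b,c$ which, after collecting terms, reads
\[
\tfrac{3}{2}\|c\|^2 \,\le\, \tfrac{3L^2\gamma^2}{2}\|a\|^2 + \bigl(\tfrac{3L^2\gamma^2}{2}-1\bigr)\|b\|^2 + \bigl(\tfrac{3}{2}-3L^2\gamma^2\bigr)\langle a,b\rangle + \tfrac{1}{2}\langle c,a\rangle + \tfrac{1}{2}\langle c,b\rangle.
\]

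The last step is to bound the three cross terms by Young's inequality $\langle u,v\rangle \le \tfrac{1}{2}\|u\|^2 + \tfrac{1}{2}\|v\|^2$ (i.e., with parameter $\alpha=1$) applied to $\langle c,a\rangle$, $\langle c,b\rangle$, and --- crucially only when the prefactor $\tfrac{3}{2}-3L^2\gamma^2$ is non-negative --- to $\langle a,b\rangle$. This prefactor is non-negative precisely under the stepsize bound $\gamma \le 1/(\sqrt{2}L)$, which is where the assumption of the lemma is used. After moving $\tfrac{1}{2}\|c\|^2$ to the left and simplifying, the coefficients of $\|a\|^2$ and $\|b\|^2$ collapse to exactly $1$ and $0$ respectively, giving $\|c\|^2 \le \|a\|^2$, i.e., $\|F(x^{k+1})\| \le \|F(x^k)\|$.

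Conceptually there is nothing deep here beyond the PEP-guided choice of $(\lambda_1,\lambda_2,\lambda_3)$; the only real obstacle is the bookkeeping, namely checking that the three $\alpha=1$ Young applications conspire to kill the $\|b\|^2$-coefficient while leaving the $\|a\|^2$-coefficient equal to $1$. Those two coefficient identities hold as exact equalities for every admissible $\gamma$, which is what makes the stepsize threshold $\gamma \le 1/(\sqrt{2}L)$ tight for this particular proof.
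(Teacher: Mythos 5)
Your proposal is correct and is essentially identical to the paper's own proof: the same three interpolation inequalities with weights $\nicefrac{2}{\gamma}$, $\nicefrac{1}{2\gamma}$, $\nicefrac{3}{2}$, the same intermediate quadratic inequality in $F(x^k)$, $F(\tx^k)$, $F(x^{k+1})$, and the same three $\alpha=1$ Young steps (with $\gamma\le\nicefrac{1}{\sqrt{2}L}$ used only to keep the $\langle a,b\rangle$ prefactor non-negative). All the coefficient computations check out.
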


This result on its own is novel and plays the central part in deriving last-iterate $\cO(\nicefrac{1}{K})$ rate for \algname{EG} in our analysis. We emphasize that \citet{golowich2020last} do not derive $\|F(x^{k+1})\| \le \|F(x^k)\|$ to show last-iterate $\cO(\nicefrac{1}{K})$ convergence of \algname{EG} and use completely different arguments based on the Lipschitzness of the Jacobian of $F$. Moreover, the assumption on $\gamma$ can be relaxed to $\gamma \leq \nicefrac{1}{L}$, but the proof would be slightly different in this case (though it can be obtained from the same PEP).

Next, one might ask a question: \textit{is it true that $\|F_{\algname{EG},\gamma_1}(x^{k+1})\| \le \|F_{\algname{EG},\gamma_1}(x^{k})\|$ for a reasonable choice of $\gamma_1$ and $\gamma_2$?} Indeed, this is a good question, since the last-iterate $\cO(\nicefrac{1}{K})$ convergence would directly follow from the random-iterate guarantee (Theorem~\ref{thm:EG_random_iter_conv}), if the inequality $\|F_{\algname{EG},\gamma_1}(x^{k+1})\| \le \|F_{\algname{EG},\gamma_1}(x^{k})\|$ held. Perhaps, surprisingly, but this is not true even for $L$-cocoercive $F$: we observed this phenomenon via solving the SDP constructed for
\begin{eqnarray}
    &\max & \|F_{\algname{EG},\gamma_1}(x^1)\|^2 - \|F_{\algname{EG},\gamma_1}(x^0)\|^2 \label{eq:EG_norm_diff_EG_PEP}\\
    &\text{s.t.} & F \text{ is $L$-cocoercive},\;x^0\in\R^d,\notag\\
    && \|x^0 - x^*\|^2 \leq 1,\notag\\
    && x^{1} = x^0 - \gamma_2 F\left(x^0 - \gamma_1 F(x^0)\right)\notag
\end{eqnarray}
with $L = 1$ and $\gamma_1 \in [\nicefrac{1}{4L}, \nicefrac{1}{L}]$, $\gamma_2 \in [\nicefrac{\gamma_1}{4}, \gamma_1]$. In our numerical tests, we observed that the optimal value in the above problem is significantly larger than $0$ for given values of $L,\gamma_1,\gamma_2$. Since it cannot be caused by the inaccuracy of the numerical solution, we conclude that inequality $\|F_{\algname{EG},\gamma_1}(x^{k+1})\| \le \|F_{\algname{EG},\gamma_1}(x^{k})\|$ is violated in some cases.

Moreover, when $\gamma_2 < \gamma_1$ we noticed a similar phenomenon for the norms of $F$. In particular, we solved the SDP constructed for
\begin{eqnarray}
    &\max & \|F(x^1)\|^2 - \|F(x^0)\|^2 \label{eq:EG_norm_diff_different_gamma_PEP}\\
    &\text{s.t.} & F \text{ is $L$-cocoercive},\;x^0\in\R^d,\notag\\
    && \|x^0 - x^*\|^2 \leq 1,\notag\\
    && x^{1} = x^0 - \gamma_2 F\left(x^0 - \gamma_1 F(x^0)\right)\notag
\end{eqnarray}
with $L = 1$, $\gamma_1 \in [\nicefrac{1}{4L}, \nicefrac{1}{L}]$, $\gamma_2 \in [\nicefrac{\gamma_1}{4}, \nicefrac{\gamma_1}{2}]$ and observed that the optimal value in the above problem is significantly larger than $0$ in this case. Therefore, we conclude that using the same stepsizes $\gamma_1 = \gamma_2$ for extrapolation and for the update is crucial for \algname{EG} to have $\|F(x^{k+1})\| \le \|F(x^k)\|$.

To derive the desired last-iterate $\cO(\nicefrac{1}{K})$ convergence of \algname{EG} it remains to combine Lemma~\ref{lem:EG_norm_is_non_incr} with standard arguments for \algname{EG}. 
\begin{theorem}[Last-iterate convergence of \eqref{eq:EG_update}: non-linear case]\label{thm:EG_last_iter_conv_non_linear}
    Let $F: \R^d \to \R^d$ be monotone and $L$-Lipschitz. Then for all $K\ge 0$ and $0 < \gamma \le \nicefrac{1}{\sqrt{2}L}$
    \begin{equation}
        \|F(x^K)\|^2 \le \frac{\|x^0 - x^*\|^2}{\gamma^2(1-L^2\gamma^2)(K+1)}, \label{eq:EG_last_iter_conv_non_linear}
    \end{equation}
    where $x^K$ is produced by \eqref{eq:EG_update} with stepsize $\gamma$, and
    \begin{equation}
        \texttt{Gap}_F(x^K) \le \frac{2\|x^0 - x^*\|^2}{\gamma\sqrt{1-L^2\gamma^2}\sqrt{K+1}}. \label{eq:EG_last_iter_conv_non_linear_gap}
    \end{equation}
\end{theorem}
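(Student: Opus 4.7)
} The plan is to combine the standard extragradient descent inequality with the new monotonicity property provided by Lemma~\ref{lem:EG_norm_is_non_incr}. Denote the extrapolated point by $y^k = x^k - \gamma F(x^k)$ so that $x^{k+1} = x^k - \gamma F(y^k)$. First I would recall the classical one-step inequality for \algname{EG}: using $\|x^{k+1}-x^*\|^2 = \|x^k - x^*\|^2 - 2\gamma\langle F(y^k), x^k - x^*\rangle + \gamma^2\|F(y^k)\|^2$, decomposing $x^k - x^* = (x^k - y^k) + (y^k - x^*)$, and invoking monotonicity $\langle F(y^k), y^k - x^*\rangle \ge 0$ together with the definition $x^k - y^k = \gamma F(x^k)$, one obtains
\begin{equation*}
\|x^{k+1}-x^*\|^2 \le \|x^k-x^*\|^2 - \gamma^2\bigl(1 - L^2\gamma^2\bigr)\|F(x^k)\|^2,
\end{equation*}
where the $L$-Lipschitzness of $F$ is used to bound the cross term $\|F(y^k)-F(x^k)\|^2 \le L^2\gamma^2\|F(x^k)\|^2$. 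This is a standard computation in the \algname{EG} literature.

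Next I would telescope this inequality over $k = 0,1,\ldots,K$ to get
\begin{equation*}
\gamma^2(1-L^2\gamma^2)\sum_{k=0}^{K}\|F(x^k)\|^2 \le \|x^0 - x^*\|^2,
\end{equation*}
which already yields the best-iterate $\mathcal{O}(1/K)$ guarantee. The crucial upgrade to a last-iterate statement is Lemma~\ref{lem:EG_norm_is_non_incr}: since $\|F(x^{k+1})\| \le \|F(x^k)\|$ for $\gamma \le 1/(\sqrt{2}L)$, every term in the sum is at least $\|F(x^K)\|^2$, so $(K+1)\|F(x^K)\|^2 \le \sum_{k=0}^K \|F(x^k)\|^2$. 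Dividing gives \eqref{eq:EG_last_iter_conv_non_linear} directly.

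For the gap bound \eqref{eq:EG_last_iter_conv_non_linear_gap}, I would use monotonicity of $F$ to convert from $F(y)$ to $F(x^K)$: for every $y$ with $\|y - x^*\| \le \|x^0 - x^*\|$,
\begin{equation*}
\langle F(y), x^K - y\rangle \le \langle F(x^K), x^K - y\rangle \le \|F(x^K)\|\cdot\|x^K - y\|.
\end{equation*}
The descent inequality above gives $\|x^K - x^*\| \le \|x^0 - x^*\|$, so by the triangle inequality $\|x^K - y\| \le 2\|x^0 - x^*\|$. Taking the maximum over admissible $y$ and plugging in \eqref{eq:EG_last_iter_conv_non_linear} yields \eqref{eq:EG_last_iter_conv_non_linear_gap}.

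The main obstacle is not in this theorem itself but in Lemma~\ref{lem:EG_norm_is_non_incr}, which is what actually promotes a best-iterate bound into a last-iterate bound; once that monotonicity of $\|F(x^k)\|$ along the trajectory is granted, the proof above is essentially routine telescoping. The only mild subtlety is ensuring the stepsize condition $\gamma \le 1/(\sqrt{2}L)$ required by Lemma~\ref{lem:EG_norm_is_non_incr} is consistent with (and implies) the condition $1 - L^2\gamma^2 > 0$ needed for the descent inequality to be informative, which it is.
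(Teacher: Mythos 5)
Your proposal is correct and follows essentially the same route as the paper: the same one-step descent inequality $\gamma^2(1-L^2\gamma^2)\|F(x^k)\|^2 \le \|x^k-x^*\|^2 - \|x^{k+1}-x^*\|^2$, telescoping, the upgrade to last-iterate via Lemma~\ref{lem:EG_norm_is_non_incr}, and the identical monotonicity/Cauchy--Schwarz argument with $\|x^K - y\| \le 2\|x^0-x^*\|$ for the gap bound. No gaps.
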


This is the first result establishing last-iterate rates $\|F(x^K)\|^2 = \cO(\nicefrac{1}{K})$ and $\texttt{Gap}_F(x^K) = \cO(\nicefrac{1}{\sqrt{K}})$ for \algname{EG} that relies on monotonicity and Lipschitzness of $F$ only. Moreover, it matches the lower bounds for \algname{EG} from \citet{golowich2020last}.

\section{OPTIMISTIC GRADIENT METHOD}\label{sec:OG}
As \algname{EG},  Optimistic Gradient method (\algname{OG}) is also often treated as an approximation of \algname{PP}. Therefore, similar questions to those that we study for \algname{EG} arise for \algname{OG}. \algname{OG} can be written in the following way:
\begin{equation}
    x^{k+1} = x^k - 2\gamma F(x^k) + \gamma F(x^{k-1}). \tag{OG}\label{eq:OG_update}
\end{equation}
For the iterates $z^k = ((x^k)^\top, (x^{k-1})^\top)^\top$ \eqref{eq:OG_update} is
\begin{equation}
    z^{k+1}\! = \!z^k \!-\! \gamma F_{\algname{OG},\gamma}(z^k),\; F_{\algname{OG},\gamma} = \begin{pmatrix} 2F & -F \\ -\frac{1}{\gamma}\Id & \frac{1}{\gamma}\Id \end{pmatrix}. \label{eq:OG_update_matrix} 
\end{equation}

There exists another popular form of \eqref{eq:OG_update} called Extrapolation from the past (\algname{EFTP}): $x^0 = \widetilde{x}^0$ and
\begin{equation}
    \widetilde{x}^{k+1} = x^k - \gamma F(\widetilde{x}^k),\; x^{k+1} = x^k - \gamma F(\widetilde{x}^{k+1}). \tag{EFTP} \label{eq:EFTP_update}
\end{equation}
One can show that \eqref{eq:EFTP_update} and \eqref{eq:OG_update} are equivalent:
\begin{eqnarray}
    \widetilde{x}^{k+1} &=& x^k - \gamma F(\widetilde{x}^k) = x^{k-1} - 2\gamma F(\tx^k) \notag\\
    &=& \widetilde{x}^k - 2\gamma F(\widetilde{x}^k) + \gamma F(\widetilde{x}^{k-1}). \label{eq:EFTP_update_to_OG_update}
\end{eqnarray}
However, update rule \eqref{eq:EFTP_update} hints the following matrix representation of the method: for $z^k = ((x^k)^\top, (\widetilde{x}^k)^\top)^\top$
\eqref{eq:EFTP_update} is equivalent to
\begin{eqnarray}
    z^{k+1} &=& z^k - \gamma F_{\algname{EFTP}, \gamma}(z^k),\notag\\
    F_{\algname{EFTP}, \gamma} &=& \begin{pmatrix}F & 0\\ 0 & \Id \end{pmatrix}\begin{pmatrix}\Id & -\gamma F\\ -\frac{1}{\gamma}\Id & \frac{1}{\gamma}\Id + F \end{pmatrix}. \label{eq:EFTP_update_matrix}
\end{eqnarray}

It turns out that for any $\gamma > 0$ operators $F_{\algname{OG},\gamma}$, $F_{\algname{EFTP}, \gamma}$ can be non-star-cocoercive even for $F$ being linear, monotone, and Lipschitz. 

\begin{theorem}\label{thm:F_OG_is_not_star_cocoercive}
    Let the linear operator $F(x) = \mA x$ be monotone and $L$-Lipschitz. Assume that $\Sp(\nabla F(x)) = \Sp(\mA)$ contains at least one eigenvalue $\hat{\lambda}$ such that $\Re(\hat{\lambda}) = 0$ and $\Im(\hat{\lambda}) \neq 0$.  Then, for any $\ell > 0$ and $\gamma > 0$ operators $F_{\algname{OG},\gamma}$, $F_{\algname{EFTP}, \gamma}$ are not $\ell$-star-cocoercive.
\end{theorem}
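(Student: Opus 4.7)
The plan is to exhibit, for each of the two operators, a single vector $z$ such that $\langle F_*(z), z - z^* \rangle = 0$ while $F_*(z) \ne 0$; any such $z$ violates \eqref{eq:l_star_cocoercivity} for every $\ell > 0$ at once. Since $F(x) = \mA x$ is linear, $x^* = 0$ is a root of $F$ and both $F_{\algname{OG},\gamma}$ and $F_{\algname{EFTP},\gamma}$ vanish at $z^* = 0$, so I would fix this as the star point for both.

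First I would extract the geometric content of the assumption $\hat\lambda = i\mu$ with $\mu = \Im(\hat\lambda) \ne 0$. Writing the associated complex eigenvector as $u + iv$ with $u, v \in \R^d$, separating real and imaginary parts of $\mA(u+iv) = i\mu(u+iv)$ gives $\mA u = -\mu v$ and $\mA v = \mu u$. Since $\mA$ is real, neither $u$ nor $v$ can vanish (otherwise the complex eigenvector itself would), and in particular $\mA u = -\mu v \ne 0$. Monotonicity then forces $\langle \mA u, u\rangle = -\mu \langle u, v\rangle \ge 0$ and $\langle \mA v, v\rangle = \mu \langle u, v\rangle \ge 0$, so $\langle u, v\rangle = 0$ and consequently $\langle \mA u, u\rangle = 0$. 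This asymmetry, a nonzero vector $u$ with $\mA u \ne 0$ yet $\langle \mA u, u\rangle = 0$, is the whole reason star-cocoercivity will fail.

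For $F_{\algname{OG},\gamma}$ I would test $z = (u^\top, u^\top)^\top$: substituting into \eqref{eq:OG_update_matrix} gives $F_{\algname{OG},\gamma}(z) = ((\mA u)^\top, 0^\top)^\top$, so $\langle F_{\algname{OG},\gamma}(z), z\rangle = \langle \mA u, u\rangle = 0$ whereas $\|F_{\algname{OG},\gamma}(z)\|^2 = \|\mA u\|^2 > 0$. For $F_{\algname{EFTP},\gamma}$ I would instead test $z = (u^\top, 0^\top)^\top$: expanding \eqref{eq:EFTP_update_matrix} yields $F_{\algname{EFTP},\gamma}(z) = ((\mA u)^\top, -\tfrac{1}{\gamma} u^\top)^\top$, so again $\langle F_{\algname{EFTP},\gamma}(z), z\rangle = \langle \mA u, u\rangle + 0 = 0$ while $\|F_{\algname{EFTP},\gamma}(z)\|^2 = \|\mA u\|^2 + \tfrac{1}{\gamma^2}\|u\|^2 > 0$. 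In both cases the defining inequality of star-cocoercivity fails for every $\ell > 0$ and every $\gamma > 0$.

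The main obstacle is really the linear-algebra step: confirming that monotonicity of the real matrix $\mA$ together with the presence of a nonzero purely imaginary eigenvalue genuinely produces a nonzero $u$ with $\mA u \ne 0$ and $\langle \mA u, u\rangle = 0$. Once that short fact is isolated, the two substitutions above are entirely mechanical, treat $\gamma > 0$ and $\ell > 0$ uniformly, and give the claim for both the standard (\algname{OG}) and extrapolation-from-the-past (\algname{EFTP}) reformulations.
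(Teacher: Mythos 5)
Your proposal is correct, and every computation checks out: the extraction of a real vector $u \neq 0$ with $\mA u \neq 0$ and $\langle \mA u, u\rangle = 0$ from the purely imaginary eigenvalue is valid (monotonicity gives $\langle \mA u,u\rangle = -\mu\langle v,u\rangle \ge 0$ and $\langle \mA v,v\rangle = \mu\langle u,v\rangle \ge 0$, forcing $\langle u,v\rangle=0$), and the evaluations $F_{\algname{OG},\gamma}(u,u) = (\mA u, 0)$ and $F_{\algname{EFTP},\gamma}(u,0) = (\mA u, -\tfrac{1}{\gamma}u)$ match the paper's definitions \eqref{eq:OG_update_matrix} and \eqref{eq:EFTP_update_matrix}. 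However, your route is genuinely different from the paper's. The paper first invokes its equivalence between cocoercivity and star-cocoercivity for linear operators (Lemma~\ref{lem:cocoer_equiv_star_cocoer_for_lin_ops}), then uses the spectral characterization of cocoercivity (Lemma~\ref{prop:spectum_ell}) to produce, for each fixed $\ell$, a \emph{pair} of points $x,x'$ at which $F$ violates $\nicefrac{\ell}{2}$- (resp.\ $\ell$-) cocoercivity, and finally shows by an explicit norm computation that $\Id - \tfrac{2}{\ell}F_{\algname{OG},\gamma}$ expands the pair $z=(x,x^*)$, $z'=(x',x^*)$, appealing to the non-expansiveness characterization of Lemma~\ref{lem:l_non_exp_and_cocoercive}. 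Your argument bypasses all three of these lemmas: it attacks the star-cocoercivity inequality \eqref{eq:l_star_cocoercivity} directly at a single test point whose inner product with its image is exactly zero, which refutes the inequality for \emph{every} $\ell>0$ at once rather than handling each $\ell$ with its own witness pair. What you lose is modularity (the paper's proof reuses machinery it needs elsewhere and, as a byproduct, establishes the stronger two-point non-cocoercivity, though for linear operators the two notions coincide by Lemma~\ref{lem:cocoer_equiv_star_cocoer_for_lin_ops}); what you gain is a shorter, self-contained, and more transparent argument that makes visible exactly which geometric feature of a bilinear-type operator --- a direction $u$ with $\mA u \perp u$ but $\mA u \neq 0$ --- is responsible for the failure. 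The only point worth making explicit in a write-up is that you take the star point to be $z^*=0$, which is legitimate since $F(0)=0$ and both composite operators vanish there.
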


Therefore, for \textit{the particular representations} \eqref{eq:OG_update_matrix} and \eqref{eq:EFTP_update_matrix} of \eqref{eq:OG_update} one cannot apply the results from Section~\ref{sec:GD_cocoercivity} to derive even random-iterate convergence guarantees.

However, this negative result does not imply that it is impossible to show random-iterate or best-iterate $\cO(\nicefrac{1}{K})$ convergence rate for \algname{OG} or \algname{EFTP}. In fact, such convergence guarantees can be derived using similar steps as in the proof of the corresponding result for \algname{EG}, see Lemma 11 from \citet{golowich2020tight}. Although this result is derived for monotone and Lipschitz operator $F$, the proof uses only star-monotonicity of $F$. For completeness, we provide the complete statement of this result and the full proof in Appendix~\ref{sec:rand_iter_EFTP}.

Moreover, \citet{golowich2020tight} derive $\cO(\nicefrac{1}{K})$ last-iterate convergence rate for \algname{OG}/\algname{EFTP} when $F$ is linear or has Lipschitz Jacobian. Establishing $\cO(\nicefrac{1}{K})$ last-iterate convergence rate for \algname{OG} or \algname{EFTP} for monotone and $L$-Lipschitz operator $F$ without additional assumptions is still an open problem.

\section{CONCLUSION}
In this paper, we close an important gap in the convergence theory for \algname{EG} by showing $\|F(x^K)\|^2 = \cO(\nicefrac{1}{K})$. Our proof is computer-assisted and is based on the PEP technique \citep{taylor2017performance,ryu2020operator}. Moreover, the ideas of reducing the proof to solving SDP problems helped \citet{kim2021accelerated, yoon2021accelerated} to derive last-iterate $\cO(\nicefrac{1}{K^2})$ rates for Accelerated \algname{PP} and Anchored \algname{EG}. We believe that this approach of deriving new proofs is very prominent.

Next, the established connections between \algname{EG}, \algname{OG}, \algname{HGM} and cocoercivity emphasize the differences between these methods and \algname{PP}. This is especially important for \algname{EG} and \algname{OG} that are often treated as similar methods for solving \eqref{eq:main_problem} and as approximations \algname{PP}. Moreover, establishing the result like $\|F(x^K)\|^2 = \cO(\nicefrac{1}{K})$ for \algname{OG} without additional assumptions on $F$ (e.g., without assuming Lipschitzness of Jacobian) remains an open problem.

\subsubsection*{Acknowledgements}
This work was partially supported by a grant for research centers in the field of artificial intelligence, provided by the Analytical Center for the Government of the Russian Federation in accordance with the subsidy agreement (agreement identifier 000000D730321P5Q0002) and the agreement with the Moscow Institute of Physics and Technology dated November 1, 2021 No. 70-2021-00138. Part of this work was done while Nicolas Loizou was a postdoctoral research fellow at Mila,  Université de Montréal, supported by the  IVADO Postdoctoral Funding Program. Gauthier Gidel is supported by an IVADO grant. Part of this work was done while Eduard Gorbunov was an intern at Mila, Universit\'e de Montr\'eal under the supervision of Gauthier Gidel. We thank Adrien Taylor for fruitful discussions, suggestions to consider log-det-heuristic and plot the output of PEP. We also thank Laurent Condat, Konstantin Mishchenko, Adil Salim, and Vladimir Semenov for pointing out important references and useful suggestions regarding the improvement of the text. Finally, we thank anonymous reviewers for their feedback and valuable suggestions regarding improvements to the paper structure.

\bibliography{refs}


\clearpage
\appendix

\thispagestyle{empty}

\onecolumn \makesupplementtitle

{\scriptsize \tableofcontents}

\section{BASIC FACTS AND AN AUXILIARY LEMMA}

In our proofs, we often use the following simple inequalities: for all $a,b \in \R^d$ and $\alpha > 0$
\begin{equation}
    \langle a, b\rangle \le \frac{\alpha}{2}\|a\|^2 + \frac{1}{2\alpha}\|b\|^2, \label{eq:young}
\end{equation}
\begin{equation}
    \|a+b\|^2 \le (1+\alpha)\|a\|^2 + (1+\alpha^{-1})\|b\|^2. \label{eq:a+b}
\end{equation}

Moreover, the following lemma plays a key role in the proof of random-iterate convergence of \eqref{eq:EFTP_update} for star-monotone and Lipschitz continuous \eqref{eq:main_problem}.
\begin{lemma}[Lemma 5 from \citet{gidel2019variational}]\label{lem:EFTP_aux_lemma}
    Let operator $F:\R^d \to \R^d$ be $L$-Lipschitz. Then, for any $x \in \R^d$ the iterates of \eqref{eq:EFTP_update} satisfy
    \begin{equation}
        2\gamma \langle F(\tx^{k+1}), \tx^{k+1} - x \rangle \le \|x^k - x\|^2 - \|x^{k+1} - x\|^2 - \|\tx^{k+1} - x^{k}\|^2 + \gamma^2 L^2 \|\tx^k - \tx^{k+1}\|^2. \label{eq:EFTP_aux_lemma}
    \end{equation}
\end{lemma}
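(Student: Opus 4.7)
The plan is to prove this inequality by a direct two-step calculation: a polarization identity that exactly produces three of the four target terms, followed by a Lipschitz bound on the one leftover term. No monotonicity of $F$ is needed.

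First I would rewrite the left-hand side using the second update rule of \eqref{eq:EFTP_update}, namely $\gamma F(\tx^{k+1}) = x^k - x^{k+1}$. This turns the left-hand side into $2\langle x^k - x^{k+1},\, \tx^{k+1} - x\rangle$, which is now a purely geometric quantity with no $F$ in it. Then I would apply the standard four-point polarization identity
\begin{equation*}
  2\langle a-b,\,c-d\rangle = \|a-d\|^2 - \|b-d\|^2 - \|a-c\|^2 + \|b-c\|^2
\end{equation*}
with $a=x^k$, $b=x^{k+1}$, $c=\tx^{k+1}$, $d=x$. This yields
\begin{equation*}
  2\gamma\langle F(\tx^{k+1}),\,\tx^{k+1}-x\rangle = \|x^k-x\|^2 - \|x^{k+1}-x\|^2 - \|\tx^{k+1}-x^k\|^2 + \|x^{k+1}-\tx^{k+1}\|^2,
\end{equation*}
which already matches three of the four target terms; only the last summand $\|x^{k+1}-\tx^{k+1}\|^2$ still needs to be bounded by $\gamma^2 L^2\|\tx^k-\tx^{k+1}\|^2$.

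Next I would compute $x^{k+1}-\tx^{k+1}$ by subtracting the two \eqref{eq:EFTP_update} identities: since $\tx^{k+1}=x^k-\gamma F(\tx^k)$ and $x^{k+1}=x^k-\gamma F(\tx^{k+1})$, their difference telescopes to
\begin{equation*}
  x^{k+1}-\tx^{k+1} = \gamma\bigl(F(\tx^k)-F(\tx^{k+1})\bigr).
\end{equation*}
Squaring and invoking the $L$-Lipschitzness of $F$ (the only assumption of the lemma) gives $\|x^{k+1}-\tx^{k+1}\|^2 \le \gamma^2 L^2\|\tx^k-\tx^{k+1}\|^2$. Plugging this back into the polarization identity yields the claimed inequality.

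Because everything reduces to an identity plus one Lipschitz estimate, there is no real obstacle. The only thing that requires a little foresight is matching the four points in the polarization identity: one must spot that using the update $\gamma F(\tx^{k+1})=x^k-x^{k+1}$ (rather than, say, expanding $\|\tx^{k+1}-x\|^2$ directly) is what makes the distances $\|x^k-x\|^2$ and $\|x^{k+1}-x\|^2$ appear naturally, and that the ``error'' term produced, $\|x^{k+1}-\tx^{k+1}\|^2$, is precisely the one that the EFTP structure allows to be controlled by $\|\tx^k-\tx^{k+1}\|^2$.
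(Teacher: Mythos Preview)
Your proof is correct. The paper does not actually prove this lemma; it cites it as Lemma~5 from \citet{gidel2019variational} and uses it as a black box. Your argument---rewriting $\gamma F(\tx^{k+1})=x^k-x^{k+1}$, applying the four-point polarization identity, and bounding the residual $\|x^{k+1}-\tx^{k+1}\|^2$ via $L$-Lipschitzness---is precisely the standard derivation and matches the proof given in the cited reference.
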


\newpage

\section{ON THE CONVERGENCE RATES UNDER LIPSCHITZNESS OF JACOBIAN}\label{sec:details_on_lip_jac}
As we mention in the main part of the paper, \citet{golowich2020last,golowich2020tight} obtain $\|F(x^K)\|^2 = \cO(\nicefrac{1}{K})$ for \algname{EG} and \algname{OG} when $F$ is monotone and $L$-Lipschitz under additional assumption that $\nabla F$ is $\Lambda$-Lipschitz. Therefore, the result is not applicable to the general case of monotone and $L$-Lipschitz $F$, which can have discontinuous $\nabla F$. Moreover, even in the case of $\Lambda$-Lipschitz Jacobian, the rates from \citet{golowich2020last,golowich2020tight} depend on $\Lambda$ that can be much larger than $L$. Indeed, neglecting numerical factors only, \citet{golowich2020last,golowich2020tight} obtain
\begin{equation}
    \|F(x^K)\|^2 = \cO\left(\frac{L^2\|x^0 - x^*\|^2}{K} + \frac{\Lambda^2\|x^0 - x^*\|^4}{K}\right). \label{eq:rates_under_Jac_lip}
\end{equation}
Consider the logistic loss with a tiny $\ell_2$-regularization:
\begin{equation}
    f(x) = \ln\left(1 + e^{ax}\right) + \frac{\delta}{2} \|x\|^2, \quad a,x\in\R,\quad |a| \gg \delta. \notag
\end{equation}
This function is smooth and strongly convex, therefore, its gradient $F(x) = \nabla f(x)$ is (strongly) monotone and Lipschitz-continuous. Moreover,
\begin{eqnarray*}
    F(x) &=& \frac{a e^{ax}}{1 + e^{ax}} + \delta x,\\
    \nabla F(x) &=& \frac{a^2 e^{ax}}{1+ e^{ax}} - \frac{a^2 e^{2ax}}{(1+e^{ax})^2} + \delta = \frac{a^2 e^{ax}}{(1+e^{ax})^2} + \delta = \frac{a^2}{(e^{-\nicefrac{ax}{2}}+e^{\nicefrac{ax}{2}})^2} + \delta,\\
    \nabla^2 F(x) &=& \frac{a^3 e^{ax}}{(1+e^{ax})^2} - \frac{2a^3 e^{2ax}}{(1+e^{ax})^3} = \frac{a^3 e^{ax}(1 - e^{ax})}{(1+e^{ax})^3} = \frac{a^3}{(e^{-\nicefrac{ax}{2}}+e^{\nicefrac{ax}{2}})^2}\cdot \frac{1-e^{ax}}{1+e^{ax}},
\end{eqnarray*}
and since $\alpha + \alpha^{-1} \ge 2$ for all $\alpha > 0$ we also have
\begin{eqnarray*}
    |\nabla F(x)| &=& \frac{a^2}{(e^{-\nicefrac{ax}{2}}+e^{\nicefrac{ax}{2}})^2} + \delta \le \frac{a^2}{4} + \delta,\\
    |\nabla^2 F(x)| &=& \frac{a^3}{(e^{-\nicefrac{ax}{2}}+e^{\nicefrac{ax}{2}})^2}\cdot \left|\frac{1-e^{ax}}{1+e^{ax}}\right| \le \frac{|a|^3}{4}.
\end{eqnarray*}
Since these upper bounds are not too loose, we have that $L \sim a^2$ and $\Lambda \sim |a|^3$. If additionally $\|x^0 - x^*\| \sim a$, then the second term in the rate from \eqref{eq:rates_under_Jac_lip} is $\sim a^6$ larger than the first one. For example, if $a = 10$, then $\Lambda^2\|x^0 - x^*\|^4$ is larger than $L^2\|x^0 - x^*\|^2$ by $\sim 6$ orders of magnitude. In contrast, our result for last-iterate convergence of \algname{EG} (Theorem~\ref{thm:EG_last_iter_conv_non_linear})
\begin{equation}
    \|F(x^K)\|^2 = \cO\left(\frac{L^2\|x^0 - x^*\|^2}{K}\right) \notag
\end{equation}
is obtained without assuming Lipschitzness of the Jacobian, and, thus, does not suffer from the issues mentioned above.

\newpage

\section{MISSING PROOFS AND DETAILS FROM SECTION \ref{sec:cocoercivity}}\label{appendix:cocoercivity}

\subsection{Proof of Lemma \ref{lem:l_non_exp_and_cocoercive}}
\begin{lemma}[Lemma \ref{lem:l_non_exp_and_cocoercive}; Proposition 4.2 from \citet{bauschke2011convex}]\label{lem:l_non_exp_and_cocoercive_appendix}
    For any operator $F:\R^d \to \R^d$ the following are equivalent
    \begin{enumerate}
        \item[(i)] $\Id - \frac{2}{\ell}F$ is non-expansive.
        \item[(ii)] $F$ is $\ell$-cocoercive.
    \end{enumerate}
\end{lemma}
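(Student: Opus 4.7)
The plan is to prove the equivalence by a direct expansion of the squared norm $\|(Tx) - (Ty)\|^2$ where $T \eqdef \Id - \frac{2}{\ell}F$, and then to simply rearrange to match the definition of $\ell$-cocoercivity. No clever trick seems required: the two conditions are essentially algebraic reformulations of each other.

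More concretely, I would fix arbitrary $x, y \in \R^d$ and compute
\begin{equation*}
\|T(x) - T(y)\|^2 = \left\|(x - y) - \frac{2}{\ell}\bigl(F(x) - F(y)\bigr)\right\|^2 = \|x-y\|^2 - \frac{4}{\ell}\langle F(x) - F(y), x - y\rangle + \frac{4}{\ell^2}\|F(x) - F(y)\|^2.
\end{equation*}
Thus the inequality $\|T(x) - T(y)\|^2 \leq \|x - y\|^2$ (i.e.\ non-expansiveness of $T$) is equivalent to
\begin{equation*}
\frac{4}{\ell^2}\|F(x) - F(y)\|^2 \leq \frac{4}{\ell}\langle F(x) - F(y), x - y\rangle,
\end{equation*}
which, after multiplying through by $\ell^2/4 > 0$, is exactly the $\ell$-cocoercivity inequality $\|F(x)-F(y)\|^2 \le \ell \langle F(x)-F(y), x-y\rangle$ from \eqref{eq:l_cocoercivity}. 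Since all manipulations are reversible and hold for arbitrary $x, y$, the two statements (i) and (ii) are equivalent.

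There is essentially no obstacle here; the only care needed is to verify that the sign of $\ell$ (positive by assumption, since cocoercivity is defined with $\ell > 0$) makes the division by $\ell^2/4$ preserve the direction of the inequality, and that the equivalence is a pointwise statement holding for every pair $(x,y)$, so quantifier-by-quantifier the two conditions coincide.
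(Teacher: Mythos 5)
Your proof is correct. It takes a slightly different, and in fact more self-contained, route than the paper: the paper invokes Proposition~4.2 of \citet{bauschke2011convex} as a black box for the base case $\ell = 1$ (non-expansiveness of $\Id - 2F$ is equivalent to $1$-cocoercivity) and then only verifies the scaling step, namely that $F$ is $\ell$-cocoercive if and only if $\frac{1}{\ell}F$ is $1$-cocoercive, so that the cited result applied to $\frac{1}{\ell}F$ yields the claim. You instead prove the full equivalence directly by expanding $\|T(x)-T(y)\|^2$ for $T = \Id - \frac{2}{\ell}F$ and observing that the non-expansiveness inequality rearranges, after multiplying by $\nicefrac{\ell^2}{4} > 0$, into exactly inequality \eqref{eq:l_cocoercivity}. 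Your calculation is the same one that underlies the cited proposition, so nothing is lost; what you gain is that the argument needs no external reference and no separate scaling lemma, and every step is visibly an equivalence for each fixed pair $(x,y)$. Your closing remark about the positivity of $\ell$ is the right thing to check and is indeed the only hypothesis the rearrangement uses.
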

\begin{proof}
    In fact, Proposition 4.2 from \citet{bauschke2011convex} establishes equivalence of the following statements:
    \begin{enumerate}
        \item[(i)] $\Id - 2F$ is non-expansive.
        \item[(ii)] $F$ is $1$-cocoercive.
    \end{enumerate}
    Therefore, it remains to check how scaling of the operator affect the result. Consider the operator $F_1 = \frac{1}{\ell} F$. This operator has the same solution of \eqref{eq:main_problem} as $F$ and it is $1$-cocoercive:
    \begin{eqnarray*}
        \|F_1(x) - F_1(x')\|^2 &=& \frac{1}{\ell^2}\|F(x) - F(x')\|^2\\
        &\overset{\eqref{eq:l_cocoercivity}}{\le}& \frac{1}{\ell}\langle x - x', F(x) - F(x') \rangle\\
        &=& \langle x - x', F_1(x) - F_1(x') \rangle.
    \end{eqnarray*}
    Moreover, via similar derivation one can show stronger result:
    \begin{equation*}
        F \text{ is } \ell\text{-cocoercive}\quad \Longleftrightarrow\quad  \frac{1}{\ell} F \text{ is } 1\text{-cocoercive}. 
    \end{equation*}
    Applying Proposition 4.2 from \citet{bauschke2011convex}, we obtain
    \begin{equation*}
        F \text{ is } \ell\text{-cocoercive}\quad \Longleftrightarrow\quad  \frac{1}{\ell} F \text{ is } 1\text{-cocoercive} \quad \Longleftrightarrow\quad \Id - \frac{2}{\ell}F \text{ is non-expansive}. 
    \end{equation*}
\end{proof}

\subsection{Proof of Theorem~\ref{thm:random_iter_conv_GD}}
\begin{lemma}[Descent lemma for \algname{GD}] \label{lem:descent_lem_GD}
    Let $F:\R^d \to \R^d$ be $\ell$-star-cocoercive around $x^*$. Then for all $k\ge 0$ iterates produced by \algname{GD} with $\gamma > 0$ satisfy
    \begin{equation}
        \gamma\left(\frac{2}{\ell} - \gamma\right)\|F(x^k)\|^2 \le \|x^k - x^*\|^2 - \|x^{k+1} - x^*\|^2. \label{eq:descent_lem_GD}
    \end{equation}
\end{lemma}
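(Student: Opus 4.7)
The plan is a direct one-step expansion of the squared distance to $x^*$ after applying the gradient-descent update, followed by a single application of the star-cocoercivity inequality.

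First I would use the update rule $x^{k+1} = x^k - \gamma F(x^k)$ to expand
\begin{equation*}
    \|x^{k+1} - x^*\|^2 = \|x^k - x^*\|^2 - 2\gamma \langle F(x^k), x^k - x^* \rangle + \gamma^2 \|F(x^k)\|^2.
\end{equation*}
Since $F(x^*) = 0$ and $F$ is $\ell$-star-cocoercive around $x^*$, inequality \eqref{eq:l_star_cocoercivity} applied at $x = x^k$ gives
\begin{equation*}
    \langle F(x^k), x^k - x^* \rangle \geq \tfrac{1}{\ell}\|F(x^k)\|^2.
\end{equation*}
Substituting this lower bound into the cross-term of the expansion yields
\begin{equation*}
    \|x^{k+1} - x^*\|^2 \leq \|x^k - x^*\|^2 - \tfrac{2\gamma}{\ell}\|F(x^k)\|^2 + \gamma^2 \|F(x^k)\|^2,
\end{equation*}
and rearranging produces exactly \eqref{eq:descent_lem_GD}.

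There is no real obstacle here: this is a two-line computation, and the only nontrivial input is the definition of star-cocoercivity itself. I would simply take care to write the star-cocoercivity step cleanly (noting $F(x^*)=0$ so that star-cocoercivity immediately translates into the lower bound on the inner product with $x^k - x^*$), since this lemma is the workhorse feeding into Theorem~\ref{thm:random_iter_conv_GD} via telescoping over $k = 0,1,\ldots,K$.
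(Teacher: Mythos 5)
Your proof is correct and is essentially identical to the paper's: both expand $\|x^{k+1}-x^*\|^2$ via the update rule and bound the cross term $\langle F(x^k), x^k - x^*\rangle$ from below by $\tfrac{1}{\ell}\|F(x^k)\|^2$ using star-cocoercivity, then rearrange. Nothing is missing.
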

\begin{proof}
    Using the update rule of \eqref{eq:GD_update} we derive
    \begin{eqnarray*}
        \|x^{k+1} - x^*\|^2 &=& \|x^k - \gamma F(x^k) - x^*\|^2\\
        &=& \|x^k - x^*\|^2 - 2\gamma\langle x^k - x^*, F(x^k)\rangle + \gamma^2 \|F(x^k)\|^2\\
        &\overset{\eqref{eq:l_star_cocoercivity}}{\le}& \|x^k - x^*\|^2 - \gamma\left(\frac{2}{\ell} - \gamma\right)\|F(x^k)\|^2.
    \end{eqnarray*}
    Rearranging the terms we get \eqref{eq:descent_lem_GD}.
\end{proof}

Averaging this inequality, one can easily show random-iterate convergence of \algname{GD}.

\begin{theorem}[Theorem~\ref{thm:random_iter_conv_GD}; Random-iterate convergence of \algname{GD}]\label{thm:random_iter_conv_GD_appendix}
    Let $F:\R^d \to \R^d$ be $\ell$-star-cocoercive around $x^*$. Then for all $K\ge 0$ we have
    \begin{equation}
        \EE\|F(\widehat x^K)\|^2 \le \frac{\ell\|x^0 - x^*\|^2}{\gamma (K+1)}, \label{eq:random_iter_conv_GD_appendix}
    \end{equation}
    where $\widehat{x}^K$ is chosen uniformly at random from the set of iterates $\{x^0,x^1,\ldots,x^K\}$ produced by \algname{GD} with $0 < \gamma \le \nicefrac{1}{\ell}$.
\end{theorem}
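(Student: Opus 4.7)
The plan is to combine the descent lemma (Lemma~\ref{lem:descent_lem_GD}) with a telescoping argument. First I would sum the descent inequality~\eqref{eq:descent_lem_GD} over $k=0,1,\ldots,K$. Since the right-hand side telescopes and $\|x^{K+1}-x^*\|^2 \ge 0$, this gives
\begin{equation*}
    \gamma\left(\tfrac{2}{\ell}-\gamma\right)\sum_{k=0}^{K}\|F(x^k)\|^2 \;\le\; \|x^0-x^*\|^2 - \|x^{K+1}-x^*\|^2 \;\le\; \|x^0-x^*\|^2.
\end{equation*}

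Next I would translate this sum into an expectation. Because $\widehat{x}^K$ is sampled uniformly from $\{x^0,\ldots,x^K\}$, by definition
\begin{equation*}
    \EE\|F(\widehat{x}^K)\|^2 \;=\; \frac{1}{K+1}\sum_{k=0}^{K}\|F(x^k)\|^2 \;\le\; \frac{\|x^0-x^*\|^2}{\gamma\left(\tfrac{2}{\ell}-\gamma\right)(K+1)}.
\end{equation*}

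Finally, I would simplify the stepsize-dependent factor using the constraint $0<\gamma\le \nicefrac{1}{\ell}$. Under this condition, $\tfrac{2}{\ell}-\gamma \ge \tfrac{1}{\ell}$, so $\gamma\bigl(\tfrac{2}{\ell}-\gamma\bigr)\ge \nicefrac{\gamma}{\ell}$, which yields exactly the claimed bound~\eqref{eq:random_iter_conv_GD_appendix}.

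There is no serious obstacle here: the whole argument is essentially telescoping plus the uniform-sampling identity, and the assumption $\gamma\le \nicefrac{1}{\ell}$ is used only at the very end to convert $1/(\gamma(\tfrac{2}{\ell}-\gamma))$ into the clean constant $\ell/\gamma$. The one minor subtlety worth flagging is that the expectation on the left-hand side is over the uniform choice of $\widehat{x}^K$ only, since \algname{GD} is deterministic here; no independence or conditional-expectation maneuvering is required.
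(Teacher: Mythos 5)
Your proposal is correct and follows essentially the same route as the paper: sum the descent inequality of Lemma~\ref{lem:descent_lem_GD} over $k=0,\ldots,K$, telescope, invoke the uniform-sampling identity for $\widehat{x}^K$, and use $\gamma \le \nicefrac{1}{\ell}$ to bound $\nicefrac{2}{\ell}-\gamma \ge \nicefrac{1}{\ell}$. The only difference is the (immaterial) order in which the stepsize bound and the expectation identity are applied.
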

\begin{proof}
    Summing up inequalities \eqref{eq:descent_lem_GD} for $k = 0,1,\ldots, K$ and dividing both sides of the result by $K+1$ we get
    \begin{eqnarray*}
        \frac{\gamma}{K+1}\left(\frac{2}{\ell} - \gamma\right)\sum\limits_{k=0}^{K}\|F(x^k)\|^2 &\le& \frac{1}{K+1}\sum\limits_{k=0}^{K}\left(\|x^k - x^*\|^2 - \|x^{k+1} - x^*\|^2\right)\\
        &=& \frac{\|x^0 - x^*\|^2 - \|x^{K+1} - x^*\|^2}{K+1}\\
        &\le& \frac{\|x^0 - x^*\|^2}{K+1}.
    \end{eqnarray*}
    Next, we use $\gamma \le \nicefrac{1}{\ell}$ to lower bound $\nicefrac{2}{\ell} - \gamma$ by $\nicefrac{1}{\ell}$ and obtain
    \begin{equation}
        \frac{1}{K+1}\sum\limits_{k=0}^{K}\|F(x^k)\|^2 \le \frac{\ell\|x^0 - x^*\|^2}{\gamma (K+1)}. \label{eq:average_squared_norm_cocoercive}
    \end{equation}
    Finally, since $\widehat{x}^K$ is chosen uniformly at random from the set $\{x^0,x^1,\ldots,x^K\}$ we derive
    \begin{equation*}
        \EE\|F(\widehat x^K)\|^2 = \frac{1}{K+1}\sum\limits_{k=0}^{K}\|F(x^k)\|^2 \le \frac{\ell\|x^0 - x^*\|^2}{\gamma (K+1)}.
    \end{equation*}
\end{proof}

\subsection{Proof of Theorem~\ref{thm:last_iter_conv_GD}}
As we mention in the main part of the paper, Theorem~\ref{thm:last_iter_conv_GD} is a well-known result \citep{brezis1978produits, diakonikolas2021potential}. Moreover, it can derived from the analysis of Krasnoselski-Mann method \citep{krasnosel1955two, mann1953mean}:
\begin{equation*}
    x^{k+1} = \alpha x^k + (1-\alpha) T(x^k),\quad \alpha \in (0,1).
\end{equation*}
Classical results on the convergence of the above method imply that $\|x^{k+1} - T(x^{k+1})\| \leq \|x^k - T(x^k)\|$ for any non-expansive operator $T$ \citep{groetsch1972note, hicks1977mann, borwein1992krasnoselski}. In view of Lemma~\ref{lem:l_non_exp_and_cocoercive}, operator $T = \Id - \tfrac{2}{\ell}F$ is non-expansive for any $\ell$-cocoercive $F$. Moreover, Krasnoselski-Mann method with such operator $T$ is equivalent to \eqref{eq:GD_update} with $\gamma = \nicefrac{2\alpha}{\ell}$ and $x^k - T(x^k) = \tfrac{2}{\ell}F(x^k)$. Therefore, $\|x^{k+1} - T(x^{k+1})\| \leq \|x^k - T(x^k)\|$ implies that $\|F(x^{k+1})\| \le \|F(x^k)\|$.

We give an alternative proof of this fact below.

\begin{lemma}\label{lem:relax_lemma}
    Let $F:\R^d \to \R^d$ be $\ell$-cocoercive. Then for all $k\ge 0$ iterates produced by \algname{GD} with $0 < \gamma \le \nicefrac{2}{\ell}$ satisfy $\|F(x^{k+1})\| \le \|F(x^k)\|$.
\end{lemma}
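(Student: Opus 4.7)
The key idea is to view the \algname{GD} iteration as a Krasnosel'ski\u{\i}--Mann-type averaging of the non-expansive operator associated with $F$ via Lemma~\ref{lem:l_non_exp_and_cocoercive_appendix}. I would first set $T \eqdef \Id - \tfrac{2}{\ell} F$, which is non-expansive by the equivalence (i)$\Leftrightarrow$(ii) of Lemma~\ref{lem:l_non_exp_and_cocoercive_appendix}. Introducing the parameter $\beta \eqdef 1 - \tfrac{\ell\gamma}{2}$, the assumption $0 < \gamma \le \nicefrac{2}{\ell}$ gives $\beta \in [0,1)$, and the \algname{GD} update can be rewritten as the convex combination $x^{k+1} = \beta x^k + (1-\beta)\,T(x^k)$.

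Next, the plan is to derive a one-step identity for $F(x^{k+1})$. Using $F = \tfrac{\ell}{2}(\Id - T)$, I would compute
\begin{equation*}
    F(x^{k+1}) - F(x^k) = \tfrac{\ell}{2}\bigl(x^{k+1} - x^k\bigr) - \tfrac{\ell}{2}\bigl(T(x^{k+1}) - T(x^k)\bigr) = -\tfrac{\ell\gamma}{2} F(x^k) - \tfrac{\ell}{2}\bigl(T(x^{k+1}) - T(x^k)\bigr),
\end{equation*}
so that $F(x^{k+1}) = \beta\,F(x^k) + \tfrac{\ell}{2}\bigl(T(x^k) - T(x^{k+1})\bigr)$.

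Finally, I would take norms and apply the triangle inequality together with the non-expansiveness of $T$ and $\|x^{k+1}-x^k\| = \gamma\|F(x^k)\|$:
\begin{equation*}
    \|F(x^{k+1})\| \le \beta\,\|F(x^k)\| + \tfrac{\ell}{2}\|T(x^{k+1}) - T(x^k)\| \le \beta\,\|F(x^k)\| + \tfrac{\ell\gamma}{2}\|F(x^k)\| = \|F(x^k)\|,
\end{equation*}
using the defining identity $\tfrac{\ell\gamma}{2} = 1-\beta$. This yields exactly the claimed monotonicity.

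I do not expect a serious obstacle here: the only thing worth double-checking is that the identity for $F(x^{k+1})$ is derived from $F = \tfrac{\ell}{2}(\Id - T)$ alone and is thus purely algebraic, while the single genuine analytic ingredient is the non-expansiveness of $T$ provided by Lemma~\ref{lem:l_non_exp_and_cocoercive_appendix}. The boundary case $\gamma = \nicefrac{2}{\ell}$ (where $\beta = 0$) is handled by the same inequality with no modification, so the whole argument works uniformly for $\gamma \in (0,\nicefrac{2}{\ell}]$.
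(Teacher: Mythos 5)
Your proof is correct; every step checks out, including the algebraic identity $F(x^{k+1}) = \beta F(x^k) + \tfrac{\ell}{2}\bigl(T(x^k) - T(x^{k+1})\bigr)$ and the boundary case $\beta = 0$. However, it takes a genuinely different route from the proof the paper writes out. The paper's own argument never passes through the non-expansive operator $T$: it starts directly from the cocoercivity inequality $\|F(x^{k+1}) - F(x^k)\|^2 \le \ell\langle F(x^{k+1}) - F(x^k), x^{k+1} - x^k\rangle$, substitutes $x^{k+1} - x^k = -\gamma F(x^k)$, expands the square, and rearranges to obtain $\|F(x^{k+1})\|^2 \le \|F(x^k)\|^2 - \tfrac{2-\gamma\ell}{\gamma}\langle F(x^k) - F(x^{k+1}), x^k - x^{k+1}\rangle$, where the subtracted term is nonnegative again by cocoercivity. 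Amusingly, the paper explicitly acknowledges your route in the paragraph preceding its proof: it notes that the claim follows from classical Krasnosel'ski\u{\i}--Mann results on non-expansive operators via Lemma~\ref{lem:l_non_exp_and_cocoercive}, and then presents its direct computation as ``an alternative proof.'' Your version has the advantage of making the KM connection self-contained and elementary (just the triangle inequality plus non-expansiveness, rather than a citation to the KM literature), and it cleanly covers the endpoint $\gamma = \nicefrac{2}{\ell}$, which the classical KM statements with averaging parameter in the open interval would formally exclude; the paper's version has the advantage of not depending on Lemma~\ref{lem:l_non_exp_and_cocoercive} at all and of producing a quantitative decrease term rather than only the monotonicity inequality.
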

\begin{proof}
    From cocoercivity we have
    \begin{eqnarray*}
        \|F(x^{k+1}) - F(x^k)\|^2 &\overset{\eqref{eq:l_cocoercivity}}{\le}& \ell\langle F(x^{k+1}) - F(x^k), x^{k+1} - x^k \rangle\\
        &=& - \gamma\ell\langle F(x^{k+1}), F(x^k)\rangle + \gamma\ell \|F(x^k)\|^2.
    \end{eqnarray*}
    Expanding the square in the left-hand side of the inequality and rearranging the terms we get
    \begin{eqnarray}
        \|F(x^{k+1})\|^2 &\le& (2 - \gamma\ell) \langle F(x^{k+1}), F(x^k) \rangle - (1 - \gamma\ell)\|F(x^k)\|^2\notag\\
        &=& \|F(x^k)\|^2 - (2-\gamma\ell)\langle F(x^k) - F(x^{k+1}), F(x^k)\rangle\notag\\
        &\overset{\eqref{eq:GD_update}}{=}& \|F(x^k)\|^2 - \frac{2-\gamma\ell}{\gamma}\langle F(x^k) - F(x^{k+1}), x^k - x^{k+1}\rangle. \label{eq:relax_technical}
    \end{eqnarray}
    Since $0 < \gamma \le \nicefrac{2}{\ell}$ and $F$ is cocoercive we have
    \begin{equation*}
        \frac{2-\gamma\ell}{\gamma}\langle F(x^k) - F(x^{k+1}), x^k - x^{k+1}\rangle \ge \frac{2-\gamma\ell}{\ell\gamma}\|F(x^k) - F(x^{k+1})\|^2 \ge 0.
    \end{equation*}
    Plugging this into \eqref{eq:relax_technical} gives $\|F(x^{k+1})\| \le \|F(x^k)\|$.
\end{proof}

\begin{theorem}[Theorem~\ref{thm:last_iter_conv_GD}; Last-iterate convergence of \algname{GD}]\label{thm:last_iter_conv_GD_appendix}
    Let $F:\R^d \to \R^d$ be $\ell$-cocoercive. Then for all $K\ge 0$ we have
    \begin{equation}
        \|F(x^K)\|^2 \le \frac{\ell\|x^0 - x^*\|^2}{\gamma (K+1)}, \label{eq:last_iter_conv_GD_appendix}
    \end{equation}
    where $x^K$ is produced by \algname{GD} with $0 < \gamma \le \nicefrac{1}{\ell}$.
\end{theorem}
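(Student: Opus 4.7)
The plan is to combine two ingredients already established earlier in the excerpt: the monotonicity of the sequence $\{\|F(x^k)\|\}_{k\ge 0}$ (Lemma~\ref{lem:relax_lemma}) and the averaged bound on $\|F(x^k)\|^2$ obtained along the way to the random-iterate rate (Theorem~\ref{thm:random_iter_conv_GD_appendix}).

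First, I would observe that $\ell$-cocoercivity implies $\ell$-star-cocoercivity around $x^*$: setting $x' = x^*$ in \eqref{eq:l_cocoercivity} and using $F(x^*)=0$ yields $\|F(x)\|^2 \le \ell \langle F(x), x - x^*\rangle$, which is exactly \eqref{eq:l_star_cocoercivity}. Hence all consequences of star-cocoercivity apply; in particular, \eqref{eq:average_squared_norm_cocoercive} gives
\begin{equation*}
    \frac{1}{K+1}\sum_{k=0}^{K}\|F(x^k)\|^2 \le \frac{\ell\|x^0 - x^*\|^2}{\gamma(K+1)}
\end{equation*}
for $0 < \gamma \le \nicefrac{1}{\ell}$.

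Second, since the stepsize satisfies $\gamma \le \nicefrac{1}{\ell} \le \nicefrac{2}{\ell}$, Lemma~\ref{lem:relax_lemma} applies and yields the monotone non-increasing property $\|F(x^{k+1})\| \le \|F(x^k)\|$ for every $k\ge 0$. By a trivial induction, $\|F(x^K)\|^2 \le \|F(x^k)\|^2$ for all $k \in \{0,1,\ldots,K\}$.

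Combining these two facts, the last iterate dominates the average:
\begin{equation*}
    \|F(x^K)\|^2 \;=\; \frac{1}{K+1}\sum_{k=0}^{K}\|F(x^K)\|^2 \;\le\; \frac{1}{K+1}\sum_{k=0}^{K}\|F(x^k)\|^2 \;\le\; \frac{\ell\|x^0 - x^*\|^2}{\gamma(K+1)},
\end{equation*}
which is the claimed bound \eqref{eq:last_iter_conv_GD_appendix}. There is no real obstacle here; both prerequisites are already proven, so the argument is just the clean combination above. The only subtlety is noting that cocoercivity is strong enough to supply star-cocoercivity for free, so that the averaged inequality from the random-iterate proof is available without any extra assumption.
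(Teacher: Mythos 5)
Your proposal is correct and follows exactly the paper's own argument: cocoercivity implies star-cocoercivity, which gives the averaged bound \eqref{eq:average_squared_norm_cocoercive}, and Lemma~\ref{lem:relax_lemma} supplies the monotone decrease of $\|F(x^k)\|$ so that the last iterate is dominated by the average. Your write-up merely spells out the details (the $x'=x^*$ substitution and the induction) that the paper leaves implicit.
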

\begin{proof}
    Since cocoercivity implies star-cocoercivity we have
    \begin{equation*}
        \frac{1}{K+1}\sum\limits_{k=0}^{K}\|F(x^k)\|^2 \overset{\eqref{eq:average_squared_norm_cocoercive}}{\le} \frac{\ell\|x^0 - x^*\|^2}{\gamma (K+1)}.
    \end{equation*}
    From Lemma~\ref{lem:relax_lemma} we have that $\|F(x^{k+1})\| \le \|F(x^k)\|$. Putting all together we get \eqref{eq:last_iter_conv_GD}.
\end{proof}

\subsection{Proof of Theorem~\ref{thm:last_iter_conv_PP}}
\begin{lemma}[Corollary 23.10 from \citet{bauschke2011convex}; Cocoercivity of Proximal Point operator]\label{lem:PP_cocoercive}
    Let $F: \R^d \to \R^d$ be monotone and $\gamma > 0$. Then operator $F_{\text{PP},\gamma}$ defined in \eqref{eq:PP_operator} is $\nicefrac{2}{\gamma}$-cocoercive.
\end{lemma}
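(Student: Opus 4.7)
The plan is to write the cocoercivity inequality directly in terms of the shadow points of the proximal operator. Given $x, x' \in \R^d$, define
$$y = x - \gamma F_{\text{PP},\gamma}(x), \qquad y' = x' - \gamma F_{\text{PP},\gamma}(x'),$$
so that by the definition in \eqref{eq:PP_operator} we have $F_{\text{PP},\gamma}(x) = F(y)$ and $F_{\text{PP},\gamma}(x') = F(y')$. The goal is then to show
$$\|F(y) - F(y')\|^2 \le \tfrac{2}{\gamma}\,\langle F(y) - F(y'),\, x - x'\rangle.$$

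First I would express $x - x'$ in terms of the shadow points and the images of $F$. From the definition of $y, y'$ we have $x = y + \gamma F(y)$ and $x' = y' + \gamma F(y')$, so
$$x - x' = (y - y') + \gamma\bigl(F(y) - F(y')\bigr).$$
Taking the inner product of both sides with $F(y) - F(y')$ yields
$$\langle F(y) - F(y'),\, x - x'\rangle = \langle F(y) - F(y'),\, y - y'\rangle + \gamma\,\|F(y) - F(y')\|^2.$$

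Now I would invoke monotonicity of $F$ to discard the first term on the right-hand side: by \eqref{eq:monotonicity_def}, $\langle F(y) - F(y'),\, y - y'\rangle \ge 0$, hence
$$\langle F(y) - F(y'),\, x - x'\rangle \ge \gamma\,\|F(y) - F(y')\|^2.$$
Rearranging gives $\|F(y) - F(y')\|^2 \le \tfrac{1}{\gamma}\,\langle F(y) - F(y'),\, x - x'\rangle$, which is even stronger than the claimed $\tfrac{2}{\gamma}$-cocoercivity (in fact, this is the firm non-expansiveness of the resolvent). Substituting back the identification $F(y) = F_{\text{PP},\gamma}(x)$ and $F(y') = F_{\text{PP},\gamma}(x')$ and relaxing the constant from $\tfrac{1}{\gamma}$ to $\tfrac{2}{\gamma}$ finishes the proof.

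There is no real obstacle here: the only thing to be careful about is that $y, y'$ are well-defined, but this is guaranteed by the implicit definition of $F_{\text{PP},\gamma}$ in \eqref{eq:PP_operator} (equivalently, the resolvent $(\Id + \gamma F)^{-1}$ is single-valued and everywhere defined when $F$ is maximal monotone, and we are implicitly assuming this so that \eqref{eq:PP_update_rule} makes sense). All other steps are algebraic manipulations combined with a single application of monotonicity.
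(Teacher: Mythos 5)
Your proposal is correct. It differs from the paper's route in a small but genuine way: the paper proves the lemma by showing that $\Id - \gamma F_{\text{PP},\gamma}$ is non-expansive (expanding $\|\widehat{x}-\widehat{y}\|^2$, substituting $x = \widehat{x} + \gamma F(\widehat{x})$, and applying monotonicity once) and then invokes the equivalence of Lemma~\ref{lem:l_non_exp_and_cocoercive} with $\ell = \nicefrac{2}{\gamma}$, whereas you verify the cocoercivity inequality directly from its definition. The underlying algebra is the same single substitution plus one application of \eqref{eq:monotonicity_def}, but your direct computation buys something: it yields $\|F_{\text{PP},\gamma}(x)-F_{\text{PP},\gamma}(x')\|^2 \le \tfrac{1}{\gamma}\langle F_{\text{PP},\gamma}(x)-F_{\text{PP},\gamma}(x'), x-x'\rangle$, i.e.\ $\nicefrac{1}{\gamma}$-cocoercivity (the classical statement for the Yosida approximation, equivalently firm non-expansiveness of $\gamma F_{\text{PP},\gamma}$), which is strictly sharper than the $\nicefrac{2}{\gamma}$ claimed; the relaxation to $\nicefrac{2}{\gamma}$ is legitimate since your own inequality shows the inner product is nonnegative. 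The paper's route, by contrast, reuses the non-expansiveness characterization that it deploys repeatedly elsewhere (e.g.\ for the non-cocoercivity counterexamples), and its intermediate bound $\|\widehat{x}-\widehat{y}\|^2 \le \|x-y\|^2 - \gamma^2\|F(\widehat{x})-F(\widehat{y})\|^2$ is in fact the same strengthening in disguise. Your caveat about well-definedness of $y,y'$ (maximal monotonicity so that the resolvent is single-valued and everywhere defined) is the right one and is implicitly assumed by the paper as well.
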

\begin{proof}
In view of Lemma~\ref{lem:l_non_exp_and_cocoercive}, it is enough to prove that $\Id - \gamma F_{\text{PP},\gamma}$ is non-expansive. To show this we consider arbitrary $x,y\in \R^d$ and define $\widehat{x}$ and $\widehat{y}$ as follows:
\begin{equation}
    \widehat{x} = x - \gamma F(\widehat{x}) = x - \gamma F_{\text{PP},\gamma}(x),\quad \widehat{y} = y - \gamma F(\widehat{y}) = y - \gamma F_{\text{PP},\gamma}(y). \notag
\end{equation}
Using this notation, we derive
\begin{eqnarray*}
    \|\widehat{x} - \widehat{y}\|^2 &=& \|x - y\|^2 -2\gamma \langle x-y, F(\widehat{x}) - F(\widehat{y}) \rangle + \gamma^2 \|F(\widehat{x}) - F(\widehat{y})\|^2\\
    &=& \|x - y\|^2 -2\gamma \langle \widehat{x} + \gamma F(\widehat{x}) - \widehat{y} - \gamma F(\widehat{y}), F(\widehat{x}) - F(\widehat{y}) \rangle + \gamma^2 \|F(\widehat{x}) - F(\widehat{y})\|^2\\
    &=& \|x-y\|^2 -2\gamma \langle \widehat{x} - \widehat{y}, F(\widehat{x}) - F(\widehat{y}) \rangle - \gamma^2 \|F(\widehat{x}) - F(\widehat{y})\|^2\\
    &\overset{\eqref{eq:monotonicity_def}}{\le}& \|x-y\|^2 - \gamma^2 \|F(\widehat{x}) - F(\widehat{y})\|^2\\
    &\le& \|x-y\|^2.
\end{eqnarray*}
That is, $\Id - \gamma F_{\text{PP},\gamma}$ is non-expansive, and, as a result, $F_{\text{PP},\gamma}$ is $\nicefrac{2}{\gamma}$-cocoercive.
\end{proof}

\begin{theorem}[Theorem~\ref{thm:last_iter_conv_PP}; Last-iterate convergence of \eqref{eq:PP_update_v2}]\label{thm:last_iter_conv_PP_appendix}
    Let $F: \R^d \to \R^d$ be monotone, $\ell > 0$ and $0 < \gamma \le \nicefrac{1}{\ell}$. Then for all $K\ge 0$ we have
    \begin{equation}
        \EE\|F(\widehat{x}^{K})\|^2 \le \frac{\ell\|x^0 - x^*\|^2}{\gamma (K+1)}, \label{eq:last_iter_conv_PP_appendix}
    \end{equation}
    where $\widehat{x}^{K} = x^K - \nicefrac{2}{\ell} F(\widehat{x}^{K}) = x^K - \nicefrac{2}{\ell} F_{\text{PP},\nicefrac{2}{\ell}}(\widehat{x}^{K})$ and $x^K$ is produced by \eqref{eq:PP_update_v2}.
\end{theorem}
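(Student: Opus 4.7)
The plan is to directly reduce this to the cocoercive Gradient Descent analysis (Theorem~\ref{thm:last_iter_conv_GD_appendix}) via the Proximal Point operator $F_{\text{PP},\nicefrac{2}{\ell}}$. The key observation is that \eqref{eq:PP_update_v2}, namely $x^{k+1} = x^k - \gamma F_{\text{PP},\nicefrac{2}{\ell}}(x^k)$, is literally the \eqref{eq:GD_update} iteration applied to the operator $F_{\text{PP},\nicefrac{2}{\ell}}$. By Lemma~\ref{lem:PP_cocoercive} (with parameter $\nicefrac{2}{\ell}$ in place of $\gamma$), the operator $F_{\text{PP},\nicefrac{2}{\ell}}$ is $\nicefrac{2}{(2/\ell)} = \ell$-cocoercive whenever $F$ is monotone. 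Hence the hypotheses of Theorem~\ref{thm:last_iter_conv_GD_appendix} are satisfied with cocoercivity constant $\ell$ and step size $\gamma \in (0,\nicefrac{1}{\ell}]$.

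Applying Theorem~\ref{thm:last_iter_conv_GD_appendix} to the iterates $\{x^k\}$ produced by \eqref{eq:PP_update_v2} immediately yields
\begin{equation*}
    \|F_{\text{PP},\nicefrac{2}{\ell}}(x^K)\|^2 \le \frac{\ell \|x^0 - x^*\|^2}{\gamma(K+1)}.
\end{equation*}
It remains to translate the left-hand side back into a statement about $F$ evaluated at $\widehat{x}^K$. By the defining relation \eqref{eq:PP_operator}, $F_{\text{PP},\nicefrac{2}{\ell}}(x^K) = F(y)$ where $y = x^K - \nicefrac{2}{\ell} F(y)$, and this implicit equation has a unique solution (standard resolvent argument for monotone $F$). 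By the definition of $\widehat{x}^K$ given in the theorem statement, $y = \widehat{x}^K$, so that $F_{\text{PP},\nicefrac{2}{\ell}}(x^K) = F(\widehat{x}^K)$. Substituting this identity into the displayed bound gives the desired inequality. The quantity on the left-hand side is deterministic, so the expectation in the statement is vacuous and the bound holds as stated.

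The only subtlety — and it is genuinely minor — lies in making sure the composition of parameters in Lemma~\ref{lem:PP_cocoercive} is applied correctly: one uses the lemma with its internal $\gamma$ set to $\nicefrac{2}{\ell}$ (not the outer $\gamma$ of \eqref{eq:PP_update_v2}), which produces the cocoercivity constant $\ell$ that then matches the step-size requirement $\gamma \le \nicefrac{1}{\ell}$ needed by Theorem~\ref{thm:last_iter_conv_GD_appendix}. No other calculation is required; the proof is essentially a one-line composition of Lemma~\ref{lem:PP_cocoercive} and Theorem~\ref{thm:last_iter_conv_GD_appendix}, together with the resolvent identity $F_{\text{PP},\nicefrac{2}{\ell}}(x^K) = F(\widehat{x}^K)$.
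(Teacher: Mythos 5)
Your proposal is correct and follows exactly the paper's own argument: apply Lemma~\ref{lem:PP_cocoercive} with internal parameter $\nicefrac{2}{\ell}$ to get $\ell$-cocoercivity of $F_{\text{PP},\nicefrac{2}{\ell}}$, invoke Theorem~\ref{thm:last_iter_conv_GD_appendix}, and use the identity $F_{\text{PP},\nicefrac{2}{\ell}}(x^K) = F(\widehat{x}^K)$. Your added remarks on uniqueness of the resolvent and the vacuous expectation are fine but not needed beyond what the paper does.
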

\begin{proof}
    Theorem~\ref{thm:last_iter_conv_GD} implies
    \begin{equation*}
        \|F_{\text{PP},\nicefrac{2}{\ell}}(x^K)\|^2 \le \frac{\ell\|x^0 - x^*\|^2}{\gamma (K+1)}.
    \end{equation*}
    Since by definition of $F_{\text{PP},\nicefrac{2}{\ell}}$ we have $F_{\text{PP},\nicefrac{2}{\ell}}(x^K) = F(\widehat{x}^K)$, \eqref{eq:last_iter_conv_PP} holds.
\end{proof}

\subsection{Further Details on Cocoercivity and Star-Cocoercivity}

In Section~\ref{sec:cocoercivity}, we give the main definitions and results about cocoercivity and star-cocoercivity that we use in the paper. Here we continue this discussion and provide extra details on these properties of the operator.

As for cocoercivity, there is a certain relation between star-cocoercivity and non-expansiveness around $x^*$.

\begin{definition}[Non-expansiveness around $x^*$]
    Let $x^*\in \R^d$ be some point. Then operator $U:\R^d \to \R^d$ is called non-expansive around $x^*$ if for all $x \in \R^d$
        \begin{equation}
            \|U(x) - U(x^*)\| \le \|x - x^*\|. \label{eq:non_exp_star}
        \end{equation}
\end{definition}

\begin{lemma}\label{lem:star_non_exp_and_star_cocoercive}
    For any operator $F:\R^d \to \R^d$ and $x^*$ such that $F(x^*) = 0$ the following are equivalent:
    \begin{enumerate}
        \item[(i)] $\Id - 2F$ is non-expansive around $x^*$.
        \item[(ii)] $F$ is $1$-star-cocoercive around $x^*$.
    \end{enumerate}
\end{lemma}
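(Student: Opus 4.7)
The plan is to mirror the structure of the proof of Lemma~\ref{lem:l_non_exp_and_cocoercive_appendix}, but exploit the fact that we only need the comparison at the single point $x^*$. Since $F(x^*) = 0$, the non-expansiveness condition $\|(\Id - 2F)(x) - (\Id - 2F)(x^*)\| \le \|x - x^*\|$ collapses to $\|x - 2F(x) - x^*\| \le \|x - x^*\|$. Squaring both sides and expanding turns this into the star-cocoercivity inequality almost directly, so the whole argument is a single chain of reversible manipulations rather than two separate implications.

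Concretely, first I would fix arbitrary $x \in \R^d$ and write
\begin{equation*}
    \|(x - 2F(x)) - (x^* - 2F(x^*))\|^2 = \|x - x^*\|^2 - 4\langle F(x) - F(x^*), x - x^* \rangle + 4\|F(x) - F(x^*)\|^2.
\end{equation*}
Using $F(x^*) = 0$, the right-hand side simplifies to $\|x - x^*\|^2 - 4\langle F(x), x - x^* \rangle + 4\|F(x)\|^2$. Therefore the inequality $\|(\Id - 2F)(x) - (\Id - 2F)(x^*)\|^2 \le \|x - x^*\|^2$ is equivalent, after cancelling $\|x - x^*\|^2$ and dividing by $4$, to
\begin{equation*}
    \|F(x)\|^2 \le \langle F(x), x - x^*\rangle,
\end{equation*}
which is exactly $1$-star-cocoercivity of $F$ around $x^*$ as defined in \eqref{eq:l_star_cocoercivity} with $\ell = 1$.

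Both directions follow from this single equivalence of inequalities: (i)$\Rightarrow$(ii) takes the non-expansiveness inequality for arbitrary $x$ and squares it to obtain star-cocoercivity, while (ii)$\Rightarrow$(i) takes star-cocoercivity and reverses the algebra, then applies $\sqrt{\cdot}$ to both sides (legitimate since both are nonnegative). There is no real obstacle here; the only thing to be careful about is making clear that the $\|x - x^*\|^2$ terms cancel and that the remaining inequality is literally the definition of $1$-star-cocoercivity, so that the equivalence is seen to hold pointwise in $x$ and hence uniformly over $\R^d$.
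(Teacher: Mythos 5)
Your proposal is correct and follows exactly the paper's own argument: expand the squared norm of $(\Id-2F)(x)-(\Id-2F)(x^*)$, use $F(x^*)=0$, cancel $\|x-x^*\|^2$, and observe that the remaining inequality is precisely the definition of $1$-star-cocoercivity, giving both directions at once as a chain of equivalences. No gaps.
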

\begin{proof}
    Non-expansiveness of $\Id - 2F$ around $x^*$ is equivalent to
    \begin{equation*}
        \|x - x^* - 2(F(x) - F(x^*))\|^2 \le \|x - x^*\|^2
    \end{equation*}
    that is equivalent to
    \begin{equation*}
        \|x - x^*\|^2 - 4\langle x - x^*, F(x)\rangle + 4\|F(x)\|^2 \le \|x - x^*\|^2.
    \end{equation*}
    Rearranging the terms, we get that the last inequality coincides with \eqref{eq:l_star_cocoercivity} for $\ell = 1$.
\end{proof}

\begin{lemma}\label{lem:l_star_non_exp_and_star_cocoercive}
    For any operator $F:\R^d \to \R^d$ and $x^*$ such that $F(x^*) = 0$ the following are equivalent:
    \begin{enumerate}
        \item[(i)] $\Id - \frac{2}{\ell}F$ is non-expansive around $x^*$.
        \item[(ii)] $F$ is $\ell$-star-cocoercive around $x^*$.
    \end{enumerate}
\end{lemma}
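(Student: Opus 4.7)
The plan is to reduce Lemma~\ref{lem:l_star_non_exp_and_star_cocoercive} to the $\ell=1$ case already established in Lemma~\ref{lem:star_non_exp_and_star_cocoercive} via a simple rescaling argument, exactly mirroring the strategy the authors used for the full (non-star) case in the proof of Lemma~\ref{lem:l_non_exp_and_cocoercive_appendix}. The key observation is that $\ell$-star-cocoercivity of $F$ around $x^*$ is equivalent to $1$-star-cocoercivity of the scaled operator $F_1 \eqdef \tfrac{1}{\ell}F$ around the same $x^*$, and that $x^*$ remains a root of $F_1$.

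First I would verify the scaling equivalence. Starting from the definition \eqref{eq:l_star_cocoercivity}, dividing both sides by $\ell^2$ yields
\begin{equation*}
    \Bigl\|\tfrac{1}{\ell}F(x)\Bigr\|^2 \le \Bigl\langle \tfrac{1}{\ell}F(x),\, x - x^*\Bigr\rangle \quad \forall x\in\R^d,
\end{equation*}
which is precisely $1$-star-cocoercivity of $F_1 = \tfrac{1}{\ell}F$ around $x^*$. The implication runs in both directions because the manipulation is reversible (multiply through by $\ell^2$), so
\begin{equation*}
    F \text{ is } \ell\text{-star-cocoercive around } x^* \quad \Longleftrightarrow \quad \tfrac{1}{\ell}F \text{ is } 1\text{-star-cocoercive around } x^*.
\end{equation*}

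Next I would apply Lemma~\ref{lem:star_non_exp_and_star_cocoercive} to $F_1 = \tfrac{1}{\ell}F$, noting that $F_1(x^*) = \tfrac{1}{\ell}F(x^*) = 0$, to obtain
\begin{equation*}
    \tfrac{1}{\ell}F \text{ is } 1\text{-star-cocoercive around } x^* \quad \Longleftrightarrow \quad \Id - 2\cdot \tfrac{1}{\ell}F = \Id - \tfrac{2}{\ell}F \text{ is non-expansive around } x^*.
\end{equation*}
Chaining the two equivalences produces the claim. Since each step is a straightforward bi-implication, there is no substantive obstacle here; the whole argument is a two-line reduction, and the only thing to be careful about is ensuring the scaling is applied consistently to both the inequality and the operator appearing inside the identity perturbation.
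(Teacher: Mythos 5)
Your proposal is correct and follows essentially the same route as the paper: the paper's proof simply states that the argument is identical to that of Lemma~\ref{lem:l_non_exp_and_cocoercive} (the rescaling $F_1 = \tfrac{1}{\ell}F$ reducing to the $\ell=1$ case) with $x'$ replaced by $x^*$, which is precisely the two-step reduction you carry out explicitly via Lemma~\ref{lem:star_non_exp_and_star_cocoercive}. Your version is just a more spelled-out rendering of the same idea, with the correct check that $F_1(x^*)=0$ so the $\ell=1$ star lemma applies.
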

\begin{proof}
    The proof is identical to the proof of Lemma~\ref{lem:l_non_exp_and_cocoercive} up to the replacement of $x'$ by $x^*$.
\end{proof}

Finally, we provide a connection between cocoercivity and star-cocoercivity. It is clear that the former implies the latter. Here the natural question arises: \textit{when the opposite implication is true?} To answer this question we consider the class of linear operators.

\begin{definition}[Linear operator]
    We say that operator $F:\R^d \to \R^d$ is linear if for any $\alpha,\beta\in \R$ and $x,y \in \R^d$ the operator satisfies $F(\alpha x + \beta y) = \alpha F(x) + \beta F(y)$.
\end{definition}

It turns out that for linear operators cocoercivity and star-cocoercivity are equivalent.

\begin{lemma}\label{lem:cocoer_equiv_star_cocoer_for_lin_ops}
    For any linear operator $F: \R^d \to \R^d$ the following are equivalent:
    \begin{itemize}
        \item[(i)] $F$ is $\ell$-cocoercive.
        \item[(ii)] $F$ is $\ell$-star-cocoercive around $x^*$.
    \end{itemize}
\end{lemma}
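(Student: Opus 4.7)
The plan is to establish the two directions of the equivalence separately, with the non-trivial direction being (ii) $\Rightarrow$ (i). The forward implication (i) $\Rightarrow$ (ii) is essentially immediate: if $F$ is $\ell$-cocoercive, I simply substitute $x' = x^*$ into the definition \eqref{eq:l_cocoercivity} and use $F(x^*) = 0$, which is part of the star-cocoercivity definition (and holds at any zero of $F$) to obtain \eqref{eq:l_star_cocoercivity}. I should mention that since $F$ is linear we have $F(0) = 0$, so at least $x^* = 0$ is a zero, and star-cocoercivity is stated around this zero.

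For the harder direction (ii) $\Rightarrow$ (i), the key observation is that linearity of $F$ lets us reduce differences to single-point evaluations. Given arbitrary $x, x' \in \R^d$, I would set $z = x - x' + x^*$. Then by linearity,
\begin{equation*}
    F(x) - F(x') = F(x - x') = F(z - x^*) = F(z) - F(x^*) = F(z),
\end{equation*}
where the last equality uses $F(x^*) = 0$. Similarly, $x - x' = z - x^*$. Plugging into the desired cocoercivity inequality, we need
\begin{equation*}
    \|F(z)\|^2 \le \ell \langle F(z), z - x^* \rangle,
\end{equation*}
which is exactly the star-cocoercivity \eqref{eq:l_star_cocoercivity} applied at the point $z$. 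Hence $F$ is $\ell$-cocoercive.

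I do not anticipate any real obstacle: the proof is a one-line computation in each direction, hinging only on the algebraic fact that linearity turns the difference $F(x) - F(x')$ into an evaluation $F(z)$ for an appropriate translate $z$, so the star-cocoercivity inequality at $z$ produces the full cocoercivity inequality at $(x, x')$. The only subtlety worth flagging is that the argument uses $F(x^* ) = 0$ in an essential way (to absorb the translation), which is why the equivalence is stated relative to a fixed $x^*$.
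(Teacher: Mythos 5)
Your proof is correct and follows essentially the same route as the paper: the forward direction is the trivial specialization $x' = x^*$, and the reverse direction uses linearity to rewrite $F(x) - F(x')$ as $F(z)$ with $z = x - x' + x^*$ and then invokes star-cocoercivity at $z$, which is exactly the paper's change of variables $x = y + x^*$ with $y = x - x'$.
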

\begin{proof}
    Implication (i) $\Longrightarrow$ (ii) holds always. Therefore, we need to prove that (ii) implies (i). Let $F$ be $\ell$-star-cocoercive around $x^*$, i.e., $F(x^*) = 0$ and the following inequality holds for all $x\in\R^d$:
    \begin{equation*}
        \|F(x)\|^2 \le \ell\langle F(x), x - x^* \rangle.
    \end{equation*}
    Next, due to linearity of $F$ we have $F(x) = F(x) - F(x^*) = F(x - x^*)$ for all $x\in \R^d$. Therefore, for all $x \in \R^d$
    \begin{equation*}
        \|F(x-x^*)\|^2 \le \ell\langle F(x-x^*), x - x^* \rangle.
    \end{equation*}
    For any $y \in \R^d$ one can take $x = y+x^*$ in the above inequality and get
    \begin{equation*}
        \|F(y)\|^2 \le \ell\langle F(y), y \rangle.
    \end{equation*}
    Finally, consider arbitrary $x,x' \in \R^d$. Replacing $y$ with $x-x'$ and using linearity of $F$, we derive
    \begin{eqnarray*}
        \|F(x) - F(x')\|^2 &=& \|F(x-x')\|^2\\
        &\le& \ell\langle F(x-x'), x-x' \rangle\\
        &=& \ell\langle F(x) - F(x'), x-x' \rangle,
    \end{eqnarray*}
    i.e., $F$ is $\ell$-cocoercive.
\end{proof}

We rely on this fact when deriving non-star-cocoercivity of two naturally arising operators corresponding to \algname{OG}.

\subsection{Spectral Viewpoint on Cocoercivity}\label{sec:spectral_view}
The following result establishes the connection between cocoercivity and the spectrum of the Jacobian. This result is a corollary of Proposition~1 from \citet{ryu2021scaled}. For completeness, we provide our proof in the appendix.
\begin{lemma}[Spectrum in a disk]
\label{prop:spectum_ell}
Let $F:\R^d \to \R^d$ be a continuously differentiable operator. Then the following statements are equivalent:
\begin{equation}
    F(x) \text{ is $\ell$-cocoercive} \quad \Longleftrightarrow \quad \Re(\nicefrac{1}{\lambda}) \geq \nicefrac{1}{\ell} \,,\quad \forall \lambda \in \Sp (\nabla F(x)) \,,\; \forall x \in\R^d\,.\label{eq:cocoercivity_and_spectrum}
\end{equation}
As Figure~\ref{fig: eg zone convergence} shows, such a constraint corresponds to a disk centered in $\nicefrac{\ell}{2}$ and of radius $\nicefrac{\ell}{2}$.
\end{lemma}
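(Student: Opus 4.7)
The strategy is to pass through the infinitesimal form of cocoercivity and then read off what it says at the level of eigenvalues of the Jacobian. I would first establish, as an intermediate equivalence, that $\ell$-cocoercivity of $F$ is equivalent to the pointwise matrix inequality
\[
\ell \cdot \tfrac{A + A^\top}{2} \;\succeq\; A^\top A, \qquad A \eqdef \nabla F(x),
\]
holding at every $x \in \R^d$. The ``$\Rightarrow$'' direction is a first-order Taylor argument: substituting $x' = x + th$ into \eqref{eq:l_cocoercivity}, using $F(x + th) = F(x) + t A h + o(t)$, and dividing by $t^2$ before sending $t \to 0$. The ``$\Leftarrow$'' direction uses the fundamental theorem of calculus: write $F(x) - F(x') = \int_0^1 A_s (x-x')\,ds$ with $A_s \eqdef \nabla F(x' + s(x-x'))$, apply Jensen's inequality to get $\|F(x) - F(x')\|^2 \leq \int_0^1 \|A_s(x-x')\|^2\,ds$, and then use the pointwise PSD inequality to bound the right-hand side by $\ell \int_0^1 \langle A_s(x-x'), x-x'\rangle\,ds = \ell \langle F(x) - F(x'), x-x'\rangle$.

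Next, I would translate the PSD inequality into a condition on the spectrum. Since $M \eqdef \ell\,\tfrac{A + A^\top}{2} - A^\top A$ is a real symmetric PSD matrix, we have $v^* M v \geq 0$ for every $v \in \CC^d$. Evaluating at a (possibly complex) eigenvector $v$ with $Av = \lambda v$, and using $v^* A v = \lambda \|v\|^2$, $v^* A^\top v = \bar{\lambda}\|v\|^2$, and $\|A v\|^2 = |\lambda|^2 \|v\|^2$, the inequality reduces to $\ell \Re(\lambda) \geq |\lambda|^2$, which, dividing through by $|\lambda|^2 > 0$, becomes $\Re(\nicefrac{1}{\lambda}) \geq \nicefrac{1}{\ell}$ (the case $\lambda = 0$ sits on the boundary of the claimed disk and is handled separately). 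The geometric reformulation is a one-line rewrite: with $\lambda = a + ib$, the inequality $\tfrac{a}{a^2+b^2} \geq \tfrac{1}{\ell}$ is the same as $(a - \tfrac{\ell}{2})^2 + b^2 \leq (\tfrac{\ell}{2})^2$, which is exactly the disk of radius $\nicefrac{\ell}{2}$ centered at $\nicefrac{\ell}{2}$ shown in Figure~\ref{fig: eg zone convergence}.

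The hard part is the reverse of this second step: going from the eigenvalue condition to the full PSD condition on $A$. For non-normal Jacobians the eigenvalues do not determine the quadratic form $v^* M v$, so this step is not a direct linear-algebra manipulation and is the main obstacle. I would resolve it by invoking Proposition~1 of \citet{ryu2021scaled}, whose scaled-relative-graph machinery delivers the equivalence in the generality required here --- this is precisely why the present lemma is stated as a corollary of that result rather than proved from scratch. Chaining (cocoercivity $\Leftrightarrow$ pointwise PSD) with (pointwise PSD $\Leftrightarrow$ spectrum in the disk, via Ryu et al.) then yields the claimed equivalence \eqref{eq:cocoercivity_and_spectrum}.
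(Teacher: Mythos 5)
Your forward implication is correct and is, in substance, the paper's own argument in a slightly cleaner packaging: the paper likewise differentiates the cocoercivity inequality along $x'=x+tu$ to obtain $\|\nabla F(x)u\|^2\le \ell\langle \nabla F(x)u,u\rangle$ for all real $u$ (your pointwise inequality $\ell\,\tfrac{A+A^\top}{2}\succeq A^\top A$), and then evaluates at a complex eigenvector $u=a+ib$ by applying the real inequality to $a$ and $b$ separately, which is exactly your computation $v^*Mv\ge 0$. Your converse of the infinitesimal characterization (fundamental theorem of calculus plus Jensen, recovering $\ell$-cocoercivity from the pointwise PSD inequality) is also correct, and is a step the paper does not spell out but would need for your route.

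The problem is the reverse implication, and you have located the obstruction exactly --- but the proposed fix cannot close it, because the implication is false. For a non-normal Jacobian the eigenvalues do not determine the quadratic form, and no citation restores the lost information: take $F(x)=Ax$ with $A=\left(\begin{smallmatrix}1 & 10\\ 0 & 1\end{smallmatrix}\right)$. Then $\Sp(\nabla F(x))=\{1\}$, so $\Re(\nicefrac{1}{\lambda})\ge \nicefrac{1}{\ell}$ holds for every $\ell\ge 1$, yet $A+A^\top$ is indefinite, so $F$ is not even monotone, let alone $\ell$-cocoercive. In the scaled-relative-graph framework of \citet{ryu2021scaled}, cocoercivity corresponds to containment of the SRG in the disk, and for non-normal operators the SRG strictly contains the spectrum; hence the spectral condition is necessary but not sufficient, and Proposition~1 there does not deliver the $(\Leftarrow)$ direction. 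It is worth knowing that the paper's own proof of $(\Leftarrow)$ commits the corresponding error: it asserts that non-expansiveness of $\Id-\tfrac{2}{\ell}F$ is ``equivalent to'' $\Sp(\mI-\tfrac{2}{\ell}\nabla F(x))$ lying in the closed unit disk, which conflates the spectral radius with the operator norm of a non-normal matrix. Only the $(\Rightarrow)$ implication of Lemma~\ref{prop:spectum_ell} is sound; that is the only direction used in the non-star-cocoercivity results for $F_{\algname{OG},\gamma}$ and $F_{\algname{EFTP},\gamma}$, and the affine \algname{EG} cocoercivity claim has an independent non-spectral proof (Lemma~\ref{lem:cocoercivity_of_EG_linear_non_spec}). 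So: keep your first two steps, state the lemma as a one-directional implication, and drop the appeal to the reverse direction.
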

\begin{proof} We start with proving $(\Rightarrow)$ part of \eqref{eq:cocoercivity_and_spectrum}. Let us consider $x,u \in \R^d$, by $\ell$-cocoercivity we have,
\begin{equation}
  \|F(x)-F(x + tu)\|^2 \leq \ell t\langle F(x)-F(x + tu),u\rangle \,,\quad \forall t >0 \,.\notag
\end{equation}
Divinding both side by $t^2$ and letting $t$ goes to $0$ gives 
\begin{equation}
  \|\nabla F(x)u\|^2 \leq \ell \langle\nabla F(x) u,u\rangle  \,.\notag
\end{equation}
Now let us consider $u = a+ib$ where $a,b \in \R^d$ an eigenvector of $\nabla F(x)$, we get
\begin{equation}
  |\lambda|^2 \|u\|^2 = \|\nabla F(x)u\|^2 = \|\nabla F(x)a\|^2 + \|\nabla F(x)b\|^2  \leq \ell ( \langle \nabla F(x)a ,a\rangle + \langle \nabla F(x)b ,b\rangle) \,,\notag
\end{equation}
where the last inequality comes from the co-coercivity applied twice.
Now let us notice that since $u$ is an eigenvector, we have
\begin{equation}
\left\{ 
    \begin{aligned}
    \nabla F(x)a = \Re(\lambda) a - \Im(\lambda) b, \notag\\
    \nabla F(x)b = \Im(\lambda) a + \Re(\lambda) b. \notag
    \end{aligned}
    \right.
\end{equation}
Thus we get 
\begin{equation}
\langle \nabla F(x)a ,a\rangle + \langle \nabla F(x)b ,b\rangle = \Re(\lambda) (\|a\|^2 + \|b\|^2) \notag
\end{equation}
which leads to,
\begin{equation}
\frac{|\lambda|^2}{\Re(\lambda)} \leq \ell \quad \Longleftrightarrow \quad \Re(\nicefrac{1}{\lambda}) \geq \nicefrac{1}{\ell}. \notag
\end{equation}
We notice that $\Re(\nicefrac{1}{\lambda}) \geq \nicefrac{1}{\ell}$ is equivalent to $\lambda \in \cD_{\nicefrac{\ell}{2}}(\nicefrac{\ell}{2}) = \{\lambda \in \CC \mid |\lambda - \nicefrac{\ell}{2}| \le \nicefrac{\ell}{2}\}$.

Next, we establish $(\Leftarrow)$ part of \eqref{eq:cocoercivity_and_spectrum}. Let $\Sp(\nabla F(x)) \subseteq \cD_{\nicefrac{\ell}{2}}(\nicefrac{\ell}{2})$ for all $x\in \R^d$. In view of Lemma~\ref{lem:l_non_exp_and_cocoercive}, it is sufficient to show that $\Id - \nicefrac{2}{\ell} F$ is non-expansive that is equivalent to $\Sp(\mI - \nicefrac{2}{\ell} \nabla F(x)) \subseteq \cD_{1}(0)$ for all $x \in \R^d$. Moreover, we have
\begin{equation*}
    \Sp\left(\mI - \frac{2}{\ell} \nabla F(x)\right) = \left\{1 - \frac{2}{\ell}\lambda \mid \lambda \in \Sp(\nabla F(x))\right\} \subseteq \left\{1 - \frac{2}{\ell}\lambda \mid \lambda \in \cD_{\nicefrac{\ell}{2}}(\nicefrac{\ell}{2})\right\} .
\end{equation*}
Finally, for any $\lambda \in \cD_{\nicefrac{\ell}{2}}(\nicefrac{\ell}{2})$ we have
\begin{eqnarray*}
    \left|1 - \frac{2}{\ell}\lambda \right| &=& \frac{2}{\ell} \left| \frac{\ell}{2} - \lambda \right| \le \frac{2}{\ell} \cdot \frac{\ell}{2} = 1,
\end{eqnarray*}
i.e., $1 - \frac{2}{\ell}\lambda \in \cD_{1}(0)$. This finishes the proof.
\end{proof}

We use this lemma to show cocoercivity of $F_{\algname{EG}, \gamma}$ when $F$ is affine.

\begin{figure}[H]
	\centering
	\newcounter{t}
	\newcounter{t2}
	\newcounter{L}
	\newcounter{mu}
	\setcounter{t}{90}
	\setcounter{L}{10}
	\setcounter{mu}{5}
	\setcounter{t2}{60}
	\begin{tikzpicture}[scale=.8]
	\begin{axis}[
	grid=major,
	axis equal image,
	yticklabel={
		$\pgfmathprintnumber{\tick}i$
	},
	xmin=-1,   xmax={\value{L} + 1},
	ymin={-\value{L}/2 - 1},   ymax={\value{L}/2 + 4},
	]
	\draw [domain=-\value{t}:\value{t}, draw=yellow, thick, fill=yellow!50!white, fill opacity=0.5, samples=65] plot (axis cs: {\value{L}*cos(\x)}, {\value{L}*sin(\x)});
	\draw [draw=yellow, thick](axis cs: {\value{L}*cos(\value{t})}, {\value{L}*sin(\value{t})}) -- (axis cs: {\value{L}*cos(\value{t})}, {\value{L}*sin(-\value{t})});
	\draw [domain=-180:180, draw=red, fill=red!50!white, thick, fill opacity=0.5, samples=65] plot (axis cs: {\value{L}*(1 + cos(\value{t}))/2 + cos(\x) * \value{L}*(1 - cos(\value{t}))/2}, {sin(\x) * \value{L}*(1 - cos(\value{t}))/2});
	\node[anchor=south west] (mu) at (axis cs: {\value{L} * cos(\value{t})}, 0) {$0$};
	\draw [fill=black] (axis cs: {\value{L} * cos(\value{t})}, 0) circle (1pt);
	\node [circle, anchor=south west] (L) at (axis cs: {\value{L}}, 0) {$\ell$};
	\node [circle, anchor=north] at (axis cs: {\value{L}/2+2.35}, 2 ) {$\ell$-cocoercive};
	\node [circle] at (axis cs: {\value{L}/2}, {\value{L}/2+2} ) {\parbox{3cm}{Monotone \\ \& $\ell$-Lipschitz}};
	\draw [fill=black] (axis cs: {\value{L}}, 0) circle (1pt);
	\draw [draw=blue, thick] (axis cs:0,0) -- (axis cs:{\value{L}},0);
	\end{axis}
	\end{tikzpicture}
	\caption{\small 
		Illustration of the constraint on the spectrum of the Jacobian of a $\ell$-cocoercive operator. In yellow, the constraint for the eigenvalues of the Jacobian $\Sp(\nabla F(x))$ of a monotone and $\ell$-Lipschitz operator are shown. Red region $\Re(1/\lambda) \geq 1/\ell$ corresponds to the constraints for the eigenvalues $\lambda$ of the Jacobian of a $\ell$-cocoercive operator (Lemma~\ref{prop:spectum_ell}).
	}\label{fig: eg zone convergence}
\end{figure}
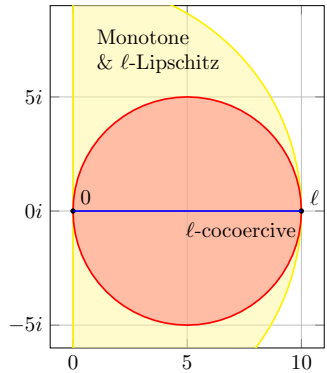

\newpage

\section{MISSING PROOFS AND DETAILS FROM SECTION~\ref{sec:EG}}

\subsection{Cocoercivity of \algname{EG} Operator in the Affine Case}\label{appendix:cocoercive_EG_affine}

\begin{lemma}\label{lem:cocoercivity_of_EG_linear_spec_appendix}
    Let $F:\R^d \to \R^d$ be affine, monotone and $L$-Lipschitz operator. Then, for all $\lambda \in \Sp(\nabla F_{\algname{EG}, \gamma})$ we have $\Re(\nicefrac{1}{\lambda}) \ge \nicefrac{\gamma}{2}$ for all $0< \gamma \le \nicefrac{1}{L}$. In view of Lemma~\ref{prop:spectum_ell}, this implies that $F_{\algname{EG}, \gamma} = F\left(\Id - \gamma F\right)$ is $\nicefrac{2}{\gamma}$-cocoercive for all $0< \gamma \le \nicefrac{1}{L}$.
\end{lemma}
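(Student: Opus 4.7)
The plan is to verify the spectral condition of Lemma~\ref{prop:spectum_ell} for the (constant) Jacobian of $F_{\algname{EG},\gamma}$. Since $F$ is affine, I would write $F(x) = \mA x + b$ and compute $F_{\algname{EG},\gamma}(x) = \mA(x - \gamma(\mA x + b)) + b$, so this is again affine with constant Jacobian $\mB := \mA(\mI - \gamma\mA)$. By the spectral mapping theorem, $\Sp(\mB) = \{\lambda(1-\gamma\lambda) : \lambda \in \Sp(\mA)\}$, so it suffices to prove $\Re(1/\mu) \ge \gamma/2$ for every such $\mu$.

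Next, I would translate the hypotheses on $F$ into constraints on $\Sp(\mA)$. Monotonicity of an affine operator is equivalent to $\mA + \mA^\top \succeq 0$, and the argument already used inside the proof of Lemma~\ref{prop:spectum_ell} (testing against a complex eigenvector $u = a + ib$) then yields $\Re(\lambda) \ge 0$ for every $\lambda \in \Sp(\mA)$. Lipschitzness gives $\|\mA\|_{\mathrm{op}} \le L$, hence $|\lambda| \le L$. Introducing $w := \gamma\lambda$ and using $\gamma \le \nicefrac{1}{L}$, these two constraints become $\Re(w) \ge 0$ and $|w| \le 1$.

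The core step is a scalar reduction: for $\mu = w(1-w)/\gamma$, the inequality $\Re(1/\mu) \ge \gamma/2$ is equivalent to $|\gamma\mu - 1| \le 1$, because the half-plane $\{\Re(1/z) \ge c\}$ coincides with the disk $\{|z - 1/(2c)| \le 1/(2c)\}$. Substituting $\gamma\mu = w - w^2$ collapses the entire problem to the purely complex inequality
\begin{equation*}
|w^2 - w + 1| \le 1 \quad \text{whenever } |w| \le 1 \text{ and } \Re(w) \ge 0.
\end{equation*}

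Finally, I would establish this inequality by applying the maximum modulus principle to the entire function $f(w) = w^2 - w + 1$ on the compact half-disk $S = \{|w|\le 1,\, \Re(w) \ge 0\}$. On the segment $w = it$, $|t| \le 1$, a short computation gives $|f(it)|^2 = 1 - t^2 + t^4 \le 1$; on the right arc $w = e^{i\theta}$, $|\theta| \le \pi/2$, the cross terms telescope to $|f(e^{i\theta})| = |2\cos\theta - 1| \le 1$. These two boundary bounds close the argument. The only step requiring genuine thought is the translation into the scalar inequality $|w^2 - w + 1| \le 1$; once that reformulation is in place, the boundary check is essentially mechanical.
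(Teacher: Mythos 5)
Your proposal is correct, and while the reduction to the spectrum of $\mA$ is the same as in the paper (spectral mapping theorem, $\Sp(\nabla F_{\algname{EG},\gamma}) = \{\lambda(1-\gamma\lambda) : \lambda \in \Sp(\mA)\}$ with $\Re(\lambda)\ge 0$ and $|\lambda|\le L$), your treatment of the core inclusion is genuinely different and considerably slicker. The paper writes $|\lambda(1-\gamma\lambda) - \nicefrac{1}{\gamma}|^2$ out in real coordinates $(\lambda_0,\lambda_1)$, observes convexity in $y=\lambda_1^2$, then checks convexity of the resulting quartic in $x=\lambda_0$ by computing $f''(x)$, and finally evaluates at the corner points of the square $[0,L]^2$ — a page of algebra, plus an implicit appeal to conjugate symmetry to cover $\Im(\lambda)<0$. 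You instead normalize via $w=\gamma\lambda$, reduce the disk condition $|\gamma\mu - 1|\le 1$ to the scale-free inequality $|w^2-w+1|\le 1$ on the right half of the unit disk, and dispatch it with the maximum modulus principle: on the imaginary segment $|f(it)|^2 = 1-t^2+t^4\le 1$, and on the arc $f(e^{i\theta}) = e^{i\theta}(2\cos\theta-1)$ so $|f(e^{i\theta})|\le 1$ for $|\theta|\le\nicefrac{\pi}{2}$. Both computations check out, and your boundary parametrization covers negative imaginary parts automatically, so nothing is lost relative to the paper. What your approach buys is brevity and a transparent explanation of \emph{why} the constant is $\nicefrac{2}{\gamma}$ (the polynomial $w\mapsto w-w^2$ maps the right half-disk into the unit disk centered at $1$); what the paper's approach buys is that it stays entirely in elementary real algebra, avoiding complex analysis. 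The only point worth being careful about in a final write-up is the degenerate eigenvalue $\mu=0$ (attained at $w=0$ or $w=1$), for which $\Re(\nicefrac{1}{\mu})$ is undefined; phrasing the target condition as membership in the closed disk $\cD_{\nicefrac{1}{\gamma}}(\nicefrac{1}{\gamma})$, as both you and the paper effectively do, handles this.
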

\begin{proof}
    Since $F = \mA x + b$ is monotone and $L$-Lipschitz, we have
    \begin{equation*}
        \Sp(\nabla F) = \Sp(\mA) \subseteq \{\lambda \in \CC\mid \Re(\lambda) \ge 0\; \&\;  |\lambda| \le L\}.
    \end{equation*}
    Next, $F_{\algname{EG},\gamma}(x) = \mA\left(x - \gamma \mA x - \gamma b\right) + b = \mA\left(\mI - \gamma\mA\right)x - \gamma\mA b + b$ and
    \begin{equation*}
        \Sp(\nabla F_{\algname{EG},\gamma}) = \Sp\left(\mA\left(\mI - \gamma\mA\right)\right) = \{\lambda(1-\gamma\lambda)\mid \lambda \in \Sp(\mA)\}.
    \end{equation*}
    Therefore, it is sufficient to prove
    \begin{equation*}
        \{\lambda(1-\gamma\lambda)\mid \Re(\lambda) \ge 0 \; \& \; |\lambda| \le L\} \subseteq \cD_{\nicefrac{1}{\gamma}}\left(\nicefrac{1}{\gamma}\right) := \left\{\lambda \in \CC \mid \left|\lambda - \nicefrac{1}{\gamma}\right| \le \nicefrac{1}{\gamma}\right\},
    \end{equation*}
    since $\Re(\nicefrac{1}{\lambda}) \ge \nicefrac{\gamma}{2}$ is equivalent to $\lambda \in \cD_{\nicefrac{1}{\gamma}}\left(\nicefrac{1}{\gamma}\right)$. In the remaining part of the proof, we will show even stronger result:
    \begin{equation}
        \{\lambda(1-\gamma\lambda)\mid \Re(\lambda), \Im(\lambda) \in [0,L]\} \subseteq \cD_{\nicefrac{1}{\gamma}}\left(\nicefrac{1}{\gamma}\right). \label{eq:cocoercivity_of_EG_linear_spec_technical_1}
    \end{equation}
    Consider arbitrary $\lambda = \lambda_0 + i\lambda_1$ such that $\lambda_0,\lambda_1 \in [0,L]$. Then,
    \begin{eqnarray*}
        \lambda(1-\gamma\lambda) &=& \left(\lambda_0 + i\lambda_1\right)\left(1 - \gamma\lambda_0 - i\gamma\lambda_1\right)\\
        &=& \lambda_0(1-\gamma\lambda_0) + \gamma\lambda_1^2 + i\left(\lambda_1(1-\gamma\lambda_0) - \gamma\lambda_0\lambda_1\right)\\
        &=& \lambda_0(1-\gamma\lambda_0) + \gamma\lambda_1^2 + i\lambda_1\left(1-2\gamma\lambda_0\right),
    \end{eqnarray*}
    implying that
    \begin{eqnarray*}
        |\lambda(1-\gamma\lambda) - \nicefrac{1}{\gamma}|^2 &=& \left(\frac{\gamma\lambda_0(1-\gamma\lambda_0)+ \gamma^2\lambda_1^2 - 1}{\gamma}\right)^2 + \lambda_1^2(1-2\gamma\lambda_0)^2\\
        &\overset{x:= \lambda_0,\; y := \lambda_1^2}{=}& \left(\frac{\gamma x(1-\gamma x)+ \gamma^2 y - 1}{\gamma}\right)^2 + y(1-2\gamma x)^2.
    \end{eqnarray*}
    One can notice that the expression above is a convex function of $y$. Since $0\le y\le L^2$, we have
    \begin{eqnarray*}
        |\lambda(1-\gamma\lambda) - \nicefrac{1}{\gamma}|^2 \le \max\left\{\left(\frac{\gamma x(1-\gamma x) - 1}{\gamma}\right)^2, \left(\frac{\gamma x(1-\gamma x)+ \gamma^2 L^2 - 1}{\gamma}\right)^2 + L^2(1-2\gamma x)^2\right\}.
    \end{eqnarray*}
    Since $x \in [0,L]$ and $\gamma \le \nicefrac{1}{L}$ we have $0 \le x(1-\gamma x) \le x \le L \le \frac{1}{\gamma}$ implying
    \begin{eqnarray*}
        \left(\frac{\gamma x(1-\gamma x) - 1}{\gamma}\right)^2 &\le& \frac{1}{\gamma^2}.  
    \end{eqnarray*}
    Next, we consider the second term in the maximum as a function of $x$:
    \begin{eqnarray*}
        f(x) &=& \left(\frac{\gamma x(1-\gamma x)+ \gamma^2 L^2 - 1}{\gamma}\right)^2 + L^2(1-2\gamma x)^2\\
        &=& \left(-\gamma x^2 + x + \gamma L^2 - \frac{1}{\gamma}\right)^2 + L^2(1 - 4\gamma x + 4\gamma^2 x^2)\\
        &=& \gamma^2 x^4 + x^2 + \gamma^2\left(L^2 - \frac{1}{\gamma^2}\right)^2 - 2\gamma x^3 -2\gamma^2 x^2\left(L^2 - \frac{1}{\gamma^2}\right) + 2\gamma x\left(L^2 - \frac{1}{\gamma^2}\right)\\
        &&\quad + L^2 - 4\gamma L^2 x + 4\gamma^2 L^2 x^2\\
        &=& \gamma^2 x^4 - 2\gamma x^3 + x^2\left(1 + 2\gamma^2\left(L^2 + \frac{1}{\gamma^2}\right)\right) -2\gamma x\left(L^2 + \frac{1}{\gamma^2}\right) + L^2 + \gamma^2\left(L^2 - \frac{1}{\gamma^2}\right)^2.
    \end{eqnarray*}
    Since for all $x \in [0,L]$
    \begin{eqnarray*}
        f''(x) &=& 12\gamma^2 x^2 - 12\gamma x + 2 + 4\gamma^2\left(L^2 + \frac{1}{\gamma^2}\right)\\
        &=& 12\gamma^2 \left(x^2 - \frac{x}{\gamma} + \frac{1}{6\gamma^2} + \frac{L^2}{3} + \frac{1}{3\gamma^2}\right)\\
        &=& 12\gamma^2 \left(\left(x - \frac{1}{2\gamma}\right)^2 + \frac{L^2}{3} + \frac{1}{4\gamma^2}\right) > 0,
    \end{eqnarray*}
    function $f(x)$ is convex. Therefore,
    \begin{eqnarray*}
        f(x) &\le& \max\left\{f(0), f(L)\right\}\\
        &=& \max\left\{\left(\gamma L^2 - \frac{1}{\gamma}\right)^2 + L^2, \left(\frac{\gamma L(1-\gamma L)+ \gamma^2 L^2 - 1}{\gamma}\right)^2 + L^2(1-2\gamma L)^2\right\}\\
        &=& \max\left\{\left(\gamma L^2 - \frac{1}{\gamma}\right)^2 + L^2, \left(L - \frac{1}{\gamma}\right)^2 + L^2(1-2\gamma L)^2\right\}.
    \end{eqnarray*}
    Since $\gamma \le \nicefrac{1}{L}$, we have
    \begin{eqnarray*}
        \left(\gamma L^2 - \frac{1}{\gamma}\right)^2 + L^2 &=& \gamma^2 L^4 - 2L^2 + \frac{1}{\gamma^2} + L^2\\
        &=& \frac{1}{\gamma^2} + L^2\left(\gamma^2 L^2 - 1\right) \le \frac{1}{\gamma^2},
    \end{eqnarray*}
    and
    \begin{eqnarray*}
        \left(L - \frac{1}{\gamma}\right)^2 + L^2(1-2\gamma L)^2 &=& L^2 - \frac{2L}{\gamma} + \frac{1}{\gamma^2} + L^2 - 4\gamma L^3 + 4\gamma^2 L^4\\
        &=& \frac{1}{\gamma^2} + \frac{2L}{\gamma}\left(\gamma L - 1\right) + 4\gamma L^3(\gamma L - 1) \le \frac{1}{\gamma^2}.
    \end{eqnarray*}
    Putting all together, we get $f(x) \le \nicefrac{1}{\gamma^2}$ and, as a result, $|\lambda(1-\gamma\lambda) - \nicefrac{1}{\gamma}|^2 \le \nicefrac{1}{\gamma^2}$. Therefore, \eqref{eq:cocoercivity_of_EG_linear_spec_technical_1} holds. This finishes the proof.
\end{proof}

\subsection{Last-Iterate Convergence of \algname{EG} in the Affine Case}\label{appendix:last_iter_EG_affine}

\begin{theorem}[Last-iterate convergence of \eqref{eq:EG_gamma_1_gamma_2}: affine case]\label{thm:EG_last_iter_affine_appendix}
    Let $F: \R^d \to \R^d$ be affine, monotone and $L$-Lipschitz, $0 < \gamma_2 \le \nicefrac{\gamma_1}{2}$, $0 < \gamma_1 \le \nicefrac{1}{L}$. Then for all $K\ge 0$ we have
    \begin{equation}
        \EE\|F(\widehat{x}^{K})\|^2 \le \frac{2\|x^0 - x^*\|^2}{\gamma_1 \gamma_2(K+1)}, \label{eq:last_iter_conv_EG_appendix}
    \end{equation}
    where $\widehat{x}^{K} = x^K - \gamma_1 F(x^K)$ and $x^K$ is produced by \eqref{eq:EG_gamma_1_gamma_2}.
\end{theorem}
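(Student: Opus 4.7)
The plan is to reduce this statement to a direct application of Theorem \ref{thm:last_iter_conv_GD} (last-iterate convergence of \algname{GD} under cocoercivity) combined with Lemma \ref{lem:cocoercivity_of_EG_linear_spec_appendix} (cocoercivity of $F_{\algname{EG},\gamma_1}$ in the affine case). The crucial observation is that the update rule \eqref{eq:EG_gamma_1_gamma_2},
\begin{equation*}
    x^{k+1} = x^k - \gamma_2 F_{\algname{EG},\gamma_1}(x^k),
\end{equation*}
is exactly \algname{GD} applied to the operator $F_{\algname{EG},\gamma_1}$ with stepsize $\gamma_2$. So the entire task is to verify that the hypotheses of Theorem \ref{thm:last_iter_conv_GD} are met with $\ell := \nicefrac{2}{\gamma_1}$.

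First I would note that since $F$ is affine, monotone, and $L$-Lipschitz, and since $\gamma_1 \le \nicefrac{1}{L}$, Lemma \ref{lem:cocoercivity_of_EG_linear_spec_appendix} gives $\ell$-cocoercivity of $F_{\algname{EG},\gamma_1}$ with $\ell = \nicefrac{2}{\gamma_1}$. The stepsize constraint $0 < \gamma_2 \le \nicefrac{\gamma_1}{2}$ is then equivalent to $0 < \gamma_2 \le \nicefrac{1}{\ell}$, matching the hypothesis of Theorem \ref{thm:last_iter_conv_GD}. I would also briefly verify that $x^*$ is still a zero of $F_{\algname{EG},\gamma_1}$: since $F(x^*) = 0$, we get $F_{\algname{EG},\gamma_1}(x^*) = F(x^* - \gamma_1 F(x^*)) = F(x^*) = 0$, so the same reference point $x^*$ can be used in the \algname{GD} bound.

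Applying Theorem \ref{thm:last_iter_conv_GD} with this choice of $\ell$ and stepsize $\gamma_2$ therefore yields
\begin{equation*}
    \|F_{\algname{EG},\gamma_1}(x^K)\|^2 \;\le\; \frac{\ell\,\|x^0 - x^*\|^2}{\gamma_2(K+1)} \;=\; \frac{2\,\|x^0 - x^*\|^2}{\gamma_1\gamma_2(K+1)}.
\end{equation*}
Unfolding the definition $F_{\algname{EG},\gamma_1}(x^K) = F(x^K - \gamma_1 F(x^K)) = F(\widehat{x}^K)$ finishes the proof. There is no genuine obstacle here: all the work is packed into Lemma \ref{lem:cocoercivity_of_EG_linear_spec_appendix} (a spectral argument specific to affine $F$) and into the monotonicity of $\|F(x^k)\|$ along cocoercive \algname{GD} iterates (Lemma \ref{lem:relax_lemma}). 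The only mild care needed is the bookkeeping of the two stepsizes $\gamma_1, \gamma_2$ and the fact that the statement should be read as a deterministic bound (the expectation sign is inherited by analogy with Theorem \ref{thm:last_iter_conv_PP} but is vacuous here since $\widehat{x}^K$ is a deterministic function of $x^K$).
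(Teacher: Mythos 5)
Your proposal is correct and follows essentially the same route as the paper: identify \eqref{eq:EG_gamma_1_gamma_2} as \algname{GD} on $F_{\algname{EG},\gamma_1}$, invoke Lemma~\ref{lem:cocoercivity_of_EG_linear_spec_appendix} for $\nicefrac{2}{\gamma_1}$-cocoercivity, apply Theorem~\ref{thm:last_iter_conv_GD} with $\ell = \nicefrac{2}{\gamma_1}$ and stepsize $\gamma_2$, and unfold $F_{\algname{EG},\gamma_1}(x^K) = F(\widehat{x}^K)$. The extra bookkeeping you include (the stepsize correspondence, $F_{\algname{EG},\gamma_1}(x^*)=0$, and the vacuity of the expectation) is all accurate and only makes the paper's two-line argument more explicit.
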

\begin{proof}
    Lemma~\ref{lem:cocoercivity_of_EG_linear_spec_appendix} and Theorem~\ref{thm:last_iter_conv_GD} imply
    \begin{equation*}
        \|F_{\algname{EG},\gamma_1}(x^K)\|^2 \le \frac{2\|x^0 - x^*\|^2}{\gamma_1\gamma_2 (K+1)}.
    \end{equation*}
    Since by definition of $F_{\algname{EG},\gamma_1}$ we have $F_{\algname{EG},\gamma_1}(x^K) = F(x^k - \gamma_1 F(x^K)) = F(\widehat{x}^{K})$, \eqref{eq:last_iter_conv_EG_appendix} holds.
\end{proof}

\subsection{Linear Case: Non-Spectral Analysis of Extragradient Method}\label{appendix:cocoercivity_EG_linear}

In this subsection, we give an alternative proof of cocoercivity of extragradient operator when $F$ is linear, monotone and $L$-Lipschitz.
\begin{lemma}\label{lem:cocoercivity_of_EG_linear_non_spec}
    Let $F:\R^d \to \R^d$ be linear, monotone and $L$-Lipschitz operator. Then, for all $\gamma \le \nicefrac{1}{L}$ extragradient operator $F_{\algname{EG}, \gamma} = F\left(\Id - \gamma F\right)$ is $\nicefrac{2}{\gamma}$-cocoercive.
\end{lemma}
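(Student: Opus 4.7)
My approach is to exploit linearity to reduce cocoercivity to a single-vector inequality, and then let the Lipschitz condition absorb the second-order error while monotonicity handles the main term; no appeal to the spectrum of $\mA$ is needed, in contrast to the proof of Lemma~\ref{lem:cocoercivity_of_EG_linear_spec_appendix}.

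Concretely, since $F(x)=\mA x$ is linear, so is $F_{\algname{EG},\gamma}(x) = (\mA - \gamma\mA^2)x$. Hence, after replacing $x-x'$ by a single vector $u$, the cocoercivity inequality \eqref{eq:l_cocoercivity} with $\ell = \nicefrac{2}{\gamma}$ reduces to showing
\[
\|F_{\algname{EG},\gamma}(u)\|^2 \;\le\; \tfrac{2}{\gamma}\,\langle F_{\algname{EG},\gamma}(u),\, u\rangle \qquad \forall\, u \in \R^d.
\]
I will introduce the extrapolated point $y = u - \gamma F(u)$ so that $F_{\algname{EG},\gamma}(u) = F(y)$, and use the identity $u - y = \gamma F(u)$ to split the right-hand side as
\[
\tfrac{2}{\gamma}\,\langle F(y),\, u\rangle \;=\; \tfrac{2}{\gamma}\,\langle F(y),\, y\rangle \;+\; 2\,\langle F(y),\, F(u)\rangle.
\]
The first term is nonnegative by monotonicity of $F$ (which amounts to $\langle \mA y, y\rangle \ge 0$), so it is sufficient to prove
\[
\|F(y)\|^2 \;-\; 2\,\langle F(y),\, F(u)\rangle \;\le\; 0.
\]

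Completing the square rewrites the left-hand side as $\|F(y)-F(u)\|^2 - \|F(u)\|^2$. Linearity then gives $F(y)-F(u) = F(y-u) = -\gamma\,F(F(u))$, while Lipschitzness yields $\|F(F(u))\|^2 \le L^2 \|F(u)\|^2$. Substituting produces
\[
\|F(y)\|^2 - 2\,\langle F(y), F(u)\rangle \;\le\; (\gamma^2 L^2 - 1)\,\|F(u)\|^2,
\]
which is $\le 0$ precisely when $\gamma \le \nicefrac{1}{L}$. Combining with the nonnegative $\nicefrac{2}{\gamma}\,\langle F(y),y\rangle$ term closes the argument.

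There is no real obstacle: each hypothesis (linearity, monotonicity, Lipschitzness, and the stepsize constraint $\gamma\le\nicefrac{1}{L}$) is used exactly once. The only subtlety is the decomposition step, where it is crucial to use $u = y + \gamma F(u)$ (and not $y + \gamma F(y)$) when splitting $\langle F(y),u\rangle$; this is what allows the cross-term $2\langle F(y),F(u)\rangle$ to cancel against $\|F(y)\|^2$ up to the benign remainder $-\|F(u)\|^2 + \gamma^2\|F^2(u)\|^2$ that Lipschitzness controls.
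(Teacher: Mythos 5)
Your proof is correct and is essentially the paper's argument in a slightly different packaging: the paper verifies non-expansiveness of $\Id - \gamma F_{\algname{EG},\gamma}$ for two points $x,y$ (via Lemma~\ref{lem:l_non_exp_and_cocoercive}), whereas you first use linearity to collapse the two-point cocoercivity inequality to a single vector $u$ and check it directly, but the subsequent steps — splitting the inner product via $u = y + \gamma F(u)$, invoking monotonicity on $\langle F(y), y\rangle$, completing the square on the cross term, and controlling $F(y)-F(u) = -\gamma F(F(u))$ by linearity and Lipschitzness — mirror the paper's computation exactly. Each hypothesis is used correctly and in the same place, so nothing further is needed.
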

\begin{proof}
    Lemma~\ref{lem:l_non_exp_and_cocoercive} implies that it is sufficient to prove non-expansiveness of $\Id - \gamma F_{\algname{EG},\gamma}$. Consider arbitrary $x,y \in\R^d$ and define
    \begin{equation*}
        \tx = x - \gamma F(x),\quad \ty = y - \gamma F(y),\quad \hx = x - \gamma F(\tx),\quad \hy = y - \gamma F(\ty).
    \end{equation*}
    Our goal is to show that $\|\hx - \hy\| \le \|x - y\|$. Using the monotonicity of $F$ and
    \begin{equation}
        2\langle a,b \rangle = \|a\|^2 + \|b\|^2 - \|b-a\|^2 \label{eq:a_b_inequality}
    \end{equation}
    that holds for all $a,b\in\R^d$, we derive
    \begin{eqnarray}
        \|\hx - \hy\|^2 &=& \|x-y\|^2 - 2\gamma\langle x - y, F(\tx) - F(\ty) \rangle + \gamma^2\|F(\tx) - F(\ty)\|^2 \notag\\
        &=& \|x - y\|^2 -2\gamma \langle \tx + \gamma F(x) -\ty - \gamma F(y), F(\tx) - F(\ty)\rangle + \gamma^2\|F(\tx) - F(\ty)\|^2 \notag\\
        &=& \|x-y\|^2 -2\gamma \langle \tx -\ty, F(\tx) - F(\ty)\rangle \notag\\
        &&\quad -\gamma^2\left(2\langle F(x) - F(y), F(\tx) - F(\ty)\rangle -\|F(\tx) - F(\ty)\|^2\right)\notag\\
        &\overset{\eqref{eq:monotonicity_def},\eqref{eq:a_b_inequality}}{\le}& \|x-y\|^2 + \gamma^2\left(\|F(\tx) - F(\ty) - F(x) + F(y)\|^2 - \|F(x) - F(y)\|^2\right).\label{eq:EG_non_exp_non_spectral_technical_2}
    \end{eqnarray}
    Next, since $F$ is linear and $L$-Lipschitz, we have
    \begin{eqnarray*}
        \|F(\tx) - F(\ty) - F(x) + F(y)\|^2 - \|F(x) - F(y)\|^2 &=& \|F(\tx - x) - F(\ty - y)\|^2 - \|F(x) - F(y)\|^2\\
        &=& \|F(\gamma F(x)) - F(\gamma F(y))\|^2 - \|F(x) - F(y)\|^2\\
        &\overset{\eqref{eq:L_lip_def}}{\le}& \left(L^2\gamma^2-1\right)\|F(x) - F(y)\|^2\\
        &\le& 0,
    \end{eqnarray*}
    where in the final step we apply $\gamma \le \nicefrac{1}{L}$. Putting this inequality in \eqref{eq:EG_non_exp_non_spectral_technical_2} we obtain $\|\hx - \hy\|^2 \le \|x - y\|^2$ that finishes the proof.
\end{proof}

\subsection{Proof of Lemma~\ref{lem:EG_star_cocoercive}}

\begin{lemma}[Lemma~\ref{lem:EG_star_cocoercive}; Star-cocoercivity of extragradient operator]\label{lem:EG_star_cocoercive_appendix}
    Let $F:\R^d \to \R^d$ be star-monotone around $x^*$, i.e., $F(x^*) = 0$ and
    \begin{equation}
        \forall x\in\R^d \quad \langle F(x), x - x^* \rangle \ge 0, \label{eq:star_monotonicity_def_appendix}
    \end{equation}
    and $L$-Lipschitz. Then, extragradient operator $F_{\algname{EG},\gamma} = F\left(\Id - \gamma F\right)$ with $\gamma \le \nicefrac{1}{L}$ is $\nicefrac{2}{\gamma}$-star-cocoercive around $x^*$.
\end{lemma}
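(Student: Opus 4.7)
The plan is to prove star-cocoercivity directly from the definition by decomposing $x-x^*$ and exploiting both star-monotonicity and Lipschitzness.

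First I would verify that $F_{\algname{EG},\gamma}(x^*) = F(x^*-\gamma F(x^*)) = F(x^*) = 0$, so the anchor condition for star-cocoercivity is satisfied. Then, for an arbitrary $x \in \R^d$, set $\tx = x - \gamma F(x)$ so that $F_{\algname{EG},\gamma}(x) = F(\tx)$. The goal reduces to proving
\begin{equation*}
\|F(\tx)\|^2 \le \frac{2}{\gamma}\,\langle F(\tx), x - x^*\rangle.
\end{equation*}

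The key decomposition is $x - x^* = (\tx - x^*) + \gamma F(x)$, which gives
\begin{equation*}
\frac{2}{\gamma}\langle F(\tx), x-x^*\rangle = \frac{2}{\gamma}\langle F(\tx), \tx - x^*\rangle + 2\langle F(\tx), F(x)\rangle.
\end{equation*}
By the star-monotonicity assumption \eqref{eq:star_monotonicity_def_appendix} applied at $\tx$, the first term on the right is nonnegative, so it suffices to show $2\langle F(\tx), F(x)\rangle \ge \|F(\tx)\|^2$. Using the polarization identity $2\langle a,b\rangle = \|a\|^2+\|b\|^2-\|b-a\|^2$ with $a = F(\tx)$, $b = F(x)$, and then applying $L$-Lipschitzness together with $\|x-\tx\| = \gamma\|F(x)\|$, we obtain
\begin{equation*}
2\langle F(\tx), F(x)\rangle = \|F(\tx)\|^2 + \|F(x)\|^2 - \|F(x) - F(\tx)\|^2 \ge \|F(\tx)\|^2 + (1 - L^2\gamma^2)\|F(x)\|^2.
\end{equation*}
The stepsize condition $\gamma \le 1/L$ ensures $1-L^2\gamma^2 \ge 0$, which yields the required bound and completes the proof.

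I do not expect any serious obstacle here: the argument is essentially a three-line calculation once the decomposition $x - x^* = (\tx - x^*) + \gamma F(x)$ is made. The only subtle point is recognizing that the natural use of star-monotonicity is at the extrapolated point $\tx$ rather than at $x$, so that the Lipschitz bound on $\|F(x)-F(\tx)\|^2$ can absorb the cross term and the $\gamma \le 1/L$ condition is used exactly once, at the very end.
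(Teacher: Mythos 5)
Your proof is correct and follows essentially the same route as the paper's: both hinge on the decomposition $x - x^* = (\tx - x^*) + \gamma F(x)$, apply star-monotonicity at the extrapolated point $\tx$, and use the polarization identity together with $\|F(x)-F(\tx)\|\le L\gamma\|F(x)\|$ to absorb the cross term under $\gamma\le\nicefrac{1}{L}$. The only cosmetic difference is that the paper first invokes the equivalence between $\nicefrac{2}{\gamma}$-star-cocoercivity and non-expansiveness of $\Id-\gamma F_{\algname{EG},\gamma}$ around $x^*$ and then bounds $\|\hx-x^*\|^2$, whereas you verify the cocoercivity inequality directly; the two computations are algebraically identical.
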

\begin{proof}
    Lemma~\ref{lem:l_star_non_exp_and_star_cocoercive} implies that
    \begin{equation*}
        F_{\algname{EG},\gamma} \text{ is } \frac{2}{\gamma}\text{-star-cocoercive around $x^*$} \quad \Longleftrightarrow\quad \Id - \gamma F_{\algname{EG},\gamma}  \text{ is non-expansive around $x^*$}.
    \end{equation*}
    Consider arbitrary $x \in\R^d$ and define
    \begin{equation*}
        \tx = x - \gamma F(x),\quad \hx = x - \gamma F(\tx).
    \end{equation*}
    Since $F_{\algname{EG},\gamma}(x^*) = 0$, our goal is to show that $\|\hx - x^*\| \le \|x - x^*\|$. Using star-monotonicity of $F$ and \eqref{eq:a_b_inequality}, we derive
    \begin{eqnarray}
        \|\hx - x^*\|^2 &=& \|x-x^*\|^2 - 2\gamma\langle x - x^*, F(\tx)\rangle + \gamma^2\|F(\tx)\|^2 \notag\\
        &=& \|x - x^*\|^2 -2\gamma \langle \tx + \gamma F(x) - x^*, F(\tx)\rangle + \gamma^2\|F(\tx)\|^2 \notag\\
        &=& \|x-x^*\|^2 -2\gamma \langle \tx -x^*, F(\tx) )\rangle -\gamma^2\left(2\langle F(x), F(\tx)\rangle -\|F(\tx)\|^2\right)\notag\\
        &\overset{\eqref{eq:star_monotonicity_def},\eqref{eq:a_b_inequality}}{\le}& \|x-x^*\|^2 + \gamma^2\left(\|F(\tx) - F(x)\|^2 - \|F(x)\|^2\right).\label{eq:EG_star_non_exp_technical_1}
    \end{eqnarray}
    Next, since $F$ is $L$-Lipschitz, we have
    \begin{eqnarray*}
        \|F(\tx) - F(x)\|^2 - \|F(x)\|^2 &\overset{\eqref{eq:L_lip_def}}{\le}& L^2\|\tx - x\|^2 - \|F(x)\|^2\\
        &=&\left(L^2\gamma^2-1\right)\|F(x)\|^2\\
        &\le& 0,
    \end{eqnarray*}
    where in the final step we apply $\gamma \le \nicefrac{1}{L}$. Putting this inequality in \eqref{eq:EG_star_non_exp_technical_1} we obtain $\|\hx - x^*\|^2 \le \|x - x^*\|^2$ that finishes the proof.
\end{proof}

\subsection{Details on Performance Estimation Problem for Showing Non-Cocoercivity of Extragradient Operator}\label{sec:details_PEP_EG_non_cocoercive}

First of all, we provide the formulas for the matrices $\mM_0, \ldots, \mM_7$ defining problem \eqref{eq:EG_exp_problem_4}:
\begin{equation*}
    \mM_0 = \begin{pmatrix}1 & -1 & 0 & 0 & -\gamma_2 & \gamma_2\\ 
    -1 & 1 & 0 & 0 & \gamma_2 & -\gamma_2\\
    0 & 0 & 0 & 0 & 0 & 0\\
    0 & 0 & 0 & 0 & 0 & 0\\
    -\gamma_2 & \gamma_2 & 0 & 0 & \gamma_2^2 & -\gamma_2^2\\
    \gamma_2 & -\gamma_2 & 0 & 0 & -\gamma_2^2 & \gamma_2^2\end{pmatrix},\quad \mM_1 = \begin{pmatrix} 0 & 0 & 0 & 0 & 0 & 0\\ 
    0 & 0 & 0 & 0 & 0 & 0\\
    0 & 0 & \ell\gamma_1 -1 & 0 & 1 - \frac{\ell\gamma_1}{2} & 0\\
    0 & 0 & 0 & 0 & 0 & 0\\
    0 & 0 & 1 - \frac{\ell\gamma_1}{2} & 0 & -1 & 0\\
    0 & 0 & 0 & 0 & 0 & 0\end{pmatrix},
\end{equation*}
\begin{equation*}
    \mM_2 = \begin{pmatrix} 0 & 0 & \frac{\ell}{2} & -\frac{\ell}{2} & 0 & 0\\ 
    0 & 0 & -\frac{\ell}{2} & \frac{\ell}{2} & 0 & 0\\
    \frac{\ell}{2} & -\frac{\ell}{2} & -1 & 1 & 0 & 0\\
    -\frac{\ell}{2} & \frac{\ell}{2} & 1 & -1 & 0 & 0\\
    0 & 0 & 0 & 0 & 0 & 0\\
    0 & 0 & 0 & 0 & 0 & 0\end{pmatrix},\quad \mM_3 = \begin{pmatrix} 0 & 0 & \frac{\ell}{2} & 0 & 0 & -\frac{\ell}{2}\\ 
    0 & 0 & -\frac{\ell}{2} & 0 & 0 & \frac{\ell}{2}\\
    \frac{\ell}{2} & -\frac{\ell}{2} & -1 & \frac{\ell\gamma_1}{2} & 0 & 1\\
    0 & 0 & \frac{\ell\gamma_1}{2} & 0 & 0 & -\frac{\ell\gamma_1}{2}\\
    0 & 0 & 0 & 0 & 0 & 0\\
    -\frac{\ell}{2} & \frac{\ell}{2} & 1 & -\frac{\ell\gamma_1}{2} & 0 & -1\end{pmatrix},
\end{equation*}
\begin{equation*}
    \mM_4 = \begin{pmatrix} 0 & 0 & 0 & -\frac{\ell}{2} & \frac{\ell}{2} & 0\\ 
    0 & 0 & 0 & \frac{\ell}{2} & -\frac{\ell}{2} & 0\\
    0 & 0 & 0 & \frac{\ell\gamma_1}{2} & -\frac{\ell\gamma_1}{2} & 0\\
    -\frac{\ell}{2} & \frac{\ell}{2} & \frac{\ell\gamma_1}{2} & -1 & 1 & 0\\
    \frac{\ell}{2} & -\frac{\ell}{2} & -\frac{\ell\gamma_1}{2} & 1 & -1 & 0\\
    0 & 0 & 0 & 0 & 0 & 0\end{pmatrix},\quad \mM_5 = \begin{pmatrix} 0 & 0 & 0 & 0 & \frac{\ell}{2} & -\frac{\ell}{2}\\ 
    0 & 0 & 0 & 0 & -\frac{\ell}{2} & \frac{\ell}{2}\\
    0 & 0 & 0 & 0 & -\frac{\ell\gamma_1}{2} & \frac{\ell\gamma_1}{2}\\
    0 & 0 & 0 & 0 & \frac{\ell\gamma_1}{2} & -\frac{\ell\gamma_1}{2}\\
    \frac{\ell}{2} & -\frac{\ell}{2} & -\frac{\ell\gamma_1}{2} & \frac{\ell\gamma_1}{2} & -1 & 1\\
    -\frac{\ell}{2} & \frac{\ell}{2} & \frac{\ell\gamma_1}{2} & -\frac{\ell\gamma_1}{2} & 1 & -1\end{pmatrix},
\end{equation*}
\begin{equation*}
    \mM_6 = \begin{pmatrix} 0 & 0 & 0 & 0 & 0 & 0\\ 
    0 & 0 & 0 & 0 & 0 & 0\\
    0 & 0 & 0 & 0 & 0 & 0\\
    0 & 0 & 0 & \ell\gamma_1 - 1 & 0 & 1 - \frac{\ell\gamma_1}{2}\\
    0 & 0 & 0 & 0 & 0 & 0\\
    0 & 0 & 0 & 1 - \frac{\ell\gamma_1}{2} & 0 & -1\end{pmatrix},\quad \mM_7 = \begin{pmatrix} 1 & -1 & 0 & 0 & 0 & 0\\ 
    -1 & 1 & 0 & 0 & 0 & 0\\
    0 & 0 & 0 & 0 & 0 & 0\\
    0 & 0 & 0 & 0 & 0 & 0\\
    0 & 0 & 0 & 0 & 0 & 0\\
    0 & 0 & 0 & 0 & 0 & 0\end{pmatrix}.
\end{equation*}

Next, we provide a rigorous proof that the example \eqref{eq:EG_bad_example} is valid, i.e., we prove Theorem~\ref{thm:EG_bad_example}.
\begin{theorem}[Theorem~\ref{thm:EG_bad_example}]\label{thm:EG_bad_example_appendix}
    For all $\ell > 0$ and $\gamma_1 \in (0,\nicefrac{1}{\ell}]$ there exists $\ell$-cocoercive operator $F$ such that $F(x) = x_{F_1}, F(y) = y_{F_1}, F(x-\gamma_1 x_{F_1}) = x_{F_2}, F(y - \gamma_1 y_{F_1}) = y_{F_2}$ for $x, y, x_{F_1}, y_{F_1}, x_{F_2}, y_{F_2}$ defined in \eqref{eq:EG_bad_example} and 
    \begin{equation}
        \|x - \gamma_2 F(x - \gamma_1 F(x)) - y + \gamma_2 F(y - \gamma_1 F(y))\| > 1 = \|x-y\| \label{eq:EG_is_non_cocoercive_appendix}
    \end{equation}
    for all $\gamma_2 > 0$, i.e., $F_{\algname{EG},\gamma_1} = F(\Id - \gamma_1F)$ is non-cocoercive.
\end{theorem}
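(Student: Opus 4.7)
}

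The plan is to split the argument into two essentially separate tasks: (a) produce an $\ell$-cocoercive operator $F$ interpolating the four prescribed data points, and (b) compute the candidate non-expansion coefficient along $x, y$ and show it strictly exceeds one. For (a) I will invoke the Ryu--Taylor--Bauschke interpolation result for cocoercive operators (Proposition~2 from \citet{ryu2020operator}), which states that a set of pairs $\{(u_i, v_i)\}_{i=1}^n$ is interpolable by an $\ell$-cocoercive operator if and only if the pairwise inequalities $\|v_i - v_j\|^2 \le \ell\langle v_i - v_j, u_i - u_j\rangle$ hold for all $i,j$. Thus it suffices to verify the six inequalities listed in \eqref{eq:EG_exp_problem_3} for the four pairs $(x, x_{F_1})$, $(y, y_{F_1})$, $(x - \gamma_1 x_{F_1}, x_{F_2})$, $(y - \gamma_1 y_{F_1}, y_{F_2})$ with the explicit vectors of \eqref{eq:EG_bad_example}.

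The bulk of the work is step (a), and the calculations were reverse-engineered so that most of the interpolation conditions are tight. Concretely, I would substitute the coordinates and check, for example, that $x_{F_1} - x_{F_2} = (-\tfrac{\ell}{2},0)^\top$ and $\gamma_1 x_{F_1} = (-\tfrac{1}{2},\tfrac{1}{2})^\top$, so $\|x_{F_1}-x_{F_2}\|^2 = \tfrac{\ell^2}{4} = \ell\langle x_{F_1}-x_{F_2}, \gamma_1 x_{F_1}\rangle$, giving equality in the first constraint. Analogous computations yield equality in the constraints associated with the pairs $(x,y)$, $(x-\gamma_1 x_{F_1}, y)$, $(x-\gamma_1 x_{F_1}, y-\gamma_1 y_{F_1})$, and $(y,y-\gamma_1 y_{F_1})$. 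The only remaining constraint, for the pair $(x, y-\gamma_1 y_{F_1})$, reduces after expansion to $\tfrac{\ell^2(1+\gamma_1^2\ell^2)}{4} \le \tfrac{\ell^2(3+\gamma_1^2\ell^2)}{4}$, i.e.\ $1\le 3$, which holds strictly. Hence all six conditions are met and Proposition~2 of \citet{ryu2020operator} furnishes the desired $\ell$-cocoercive $F$ (using e.g.\ the standard ``resolvent'' extension in $\R^2$; note that the hypothesis $\gamma_1 \le \nicefrac{1}{\ell}$ is used implicitly to keep the conditions internally consistent but does not appear as an extra arithmetic constraint once all six pairwise inequalities are verified).

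For step (b), the second-coordinate-only difference $x_{F_2} - y_{F_2} = (0, \tfrac{\gamma_1\ell^2}{2})^\top$ together with $x - y = (-1,0)^\top$ yields
\begin{equation*}
    x - \gamma_2 x_{F_2} - y + \gamma_2 y_{F_2} = \left(-1,\, -\tfrac{\gamma_2\gamma_1 \ell^2}{2}\right)^\top,
\end{equation*}
so $\|x - \gamma_2 F(x-\gamma_1 F(x)) - y + \gamma_2 F(y-\gamma_1 F(y))\|^2 = 1 + \tfrac{\gamma_2^2\gamma_1^2\ell^4}{4} > 1 = \|x-y\|^2$, which establishes \eqref{eq:EG_is_non_cocoercive_appendix}. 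By Lemma~\ref{lem:l_non_exp_and_cocoercive_appendix} applied with $\ell' = \nicefrac{2}{\gamma_2}$ for arbitrary $\gamma_2>0$, this shows $F_{\algname{EG},\gamma_1}$ is not $\ell'$-cocoercive for any $\ell' > 0$, completing the proof.

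The main obstacle is purely computational: keeping the six pairwise inner products and norms straight, and verifying the single strict inequality correctly. Since the example was constructed via PEP so that five out of six constraints are tight, the verification is mechanical but must be presented carefully. No additional ideas beyond substitution and the already-cited interpolation theorem are needed.
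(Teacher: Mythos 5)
Your proposal is correct and follows essentially the same route as the paper: reduce existence of the interpolating $\ell$-cocoercive operator to the six pairwise conditions via Proposition~2 of \citet{ryu2020operator}, verify that five hold with equality and one (the pair $(x,\,y-\gamma_1 y_{F_1})$) holds strictly, compute $\|x-\gamma_2 x_{F_2}-y+\gamma_2 y_{F_2}\|^2 = 1+\nicefrac{\gamma_1^2\gamma_2^2\ell^4}{4}$, and conclude via Lemma~\ref{lem:l_non_exp_and_cocoercive_appendix}. The two explicit verifications you carried out are correct (your evaluation of the slack in the strict constraint, reducing to $1\le 3$, is in fact cleaner than the paper's intermediate display), and the remaining four equalities you defer to are indeed routine substitutions.
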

\begin{proof}
    Proposition~2 from \citet{ryu2020operator} implies that it is sufficient to show that
    \begin{eqnarray}
    \ell\langle x_{F_1} - x_{F_2}, \gamma_1 x_{F_1} \rangle &\ge& \|x_{F_1} - x_{F_2}\|^2,\label{eq:EG_is_non_cocoercive_tech1}\\
    \ell\langle x_{F_1} - y_{F_1}, x-y \rangle &\ge&  \|x_{F_1} - y_{F_1}\|^2,\label{eq:EG_is_non_cocoercive_tech2}\\
    \ell\langle x_{F_1} - y_{F_2}, x-y + \gamma_1 y_{F_1}\rangle &\ge&  \|x_{F_1} - y_{F_2}\|^2,\label{eq:EG_is_non_cocoercive_tech3}\\
    \ell\langle x_{F_2} - y_{F_1}, x - \gamma_1 x_{F_1} - y\rangle &\ge&  \|x_{F_2} - y_{F_1}\|^2,\label{eq:EG_is_non_cocoercive_tech4}\\
    \ell\langle x_{F_2} - y_{F_2}, x - \gamma_1 x_{F_1} - y + \gamma_1 y_{F_1}\rangle &\ge&  \|x_{F_2} - y_{F_2}\|^2, \label{eq:EG_is_non_cocoercive_tech5}\\
    \ell\langle y_{F_1} - y_{F_2}, \gamma_1 y_{F_1}\rangle &\ge&  \|y_{F_1} - y_{F_2}\|^2 \label{eq:EG_is_non_cocoercive_tech6}
    \end{eqnarray}
    in order to prove that there exists $\ell$-cocoercive $F$ such that $F(x) = x_{F_1}, F(y) = y_{F_1}, F(x-\gamma_1 x_{F_1}) = x_{F_2}, F(y - \gamma_1 y_{F_1}) = y_{F_2}$. Below we derive these inequalities for $x, y, x_{F_1}, y_{F_1}, x_{F_2}, y_{F_2}$ defined in \eqref{eq:EG_bad_example}.
    
    \noindent \textbf{Proof of \eqref{eq:EG_is_non_cocoercive_tech1}.} We have
    \begin{eqnarray*}
        \ell\langle x_{F_1} - x_{F_2}, \gamma_1 x_{F_1} \rangle - \|x_{F_1} - x_{F_2}\|^2 &=& \ell\left(-\frac{1}{2\gamma_1} + \frac{1-\gamma_1\ell}{2\gamma_1}\right)\cdot\frac{-\gamma_1}{2\gamma_1} - \left(-\frac{1}{2\gamma_1} + \frac{1-\gamma_1\ell}{2\gamma_1}\right)^2\\
        &=& \frac{\ell^2}{4} - \frac{\ell^2}{4} = 0.
    \end{eqnarray*}
    
    \noindent \textbf{Proof of \eqref{eq:EG_is_non_cocoercive_tech2}.} We have
    \begin{eqnarray*}
        \ell\langle x_{F_1} - y_{F_1}, x-y \rangle - \|x_{F_1} - y_{F_1}\|^2 &=& -\ell\left(-\frac{1}{2\gamma_1} + \frac{1-\gamma_1\ell}{2\gamma_1}\right) - \left(-\frac{1}{2\gamma_1} + \frac{1-\gamma_1\ell}{2\gamma_1}\right)^2\\
        &&\quad - \left(\frac{1}{2\gamma_1} - \frac{1+\gamma_1\ell}{2\gamma_1}\right)^2\\
        &=& \frac{\ell^2}{2} - \frac{\ell^2}{4} - \frac{\ell^2}{4} = 0.
    \end{eqnarray*}
    
    \noindent \textbf{Proof of \eqref{eq:EG_is_non_cocoercive_tech3}.} We have
    \begin{eqnarray*}
        \ell\langle x_{F_1} - y_{F_2}, x-y + \gamma_1 y_{F_1}\rangle -  \|x_{F_1} - y_{F_2}\|^2 &=& \ell\left(-\frac{1}{2\gamma_1} + \frac{1-\gamma_1\ell}{2\gamma_1}\right)\left(-1 + \frac{1-\gamma_1\ell}{2}\right)\\
        &&\quad + \ell\left(\frac{1}{2\gamma_1} - \frac{1-\gamma_1^2\ell^2}{2\gamma_1}\right)\cdot\frac{1+\gamma_1\ell}{2}\\
        &&\quad - \left(-\frac{1}{2\gamma_1} + \frac{1-\gamma_1\ell}{2\gamma_1}\right)^2 - \left(\frac{1}{2\gamma_1} - \frac{1-\gamma_1^2\ell^2}{2\gamma_1}\right)^2\\
        &=& \frac{\ell^2(1+\gamma_1\ell)}{4} + \frac{\gamma_1\ell^3(1+\gamma_1\ell)}{4} - \frac{\ell^2}{4} - \frac{\gamma_1^2\ell^4}{4}\\
        &=& \frac{\gamma_1\ell}{2} > 0.
    \end{eqnarray*}
    
    \noindent \textbf{Proof of \eqref{eq:EG_is_non_cocoercive_tech4}.} We have
    \begin{eqnarray*}
        \ell\langle x_{F_2} - y_{F_1}, x - \gamma_1 x_{F_1} - y\rangle -  \|x_{F_2} - y_{F_1}\|^2 &=& \ell\left(\frac{1}{2\gamma_1} - \frac{1+\gamma_1\ell}{2\gamma_1}\right)\cdot\frac{-\gamma_1}{2\gamma_1} - \left(\frac{1}{2\gamma_1} - \frac{1+\gamma_1\ell}{2\gamma_1}\right)^2\\
        &=& \frac{\ell^2}{4} - \frac{\ell^2}{4} = 0.
    \end{eqnarray*}
    
    \noindent \textbf{Proof of \eqref{eq:EG_is_non_cocoercive_tech5}.} We have
    \begin{eqnarray*}
        \ell\langle x_{F_2} - y_{F_2}, x - \gamma_1 x_{F_1} - y + \gamma_1 y_{F_1}\rangle -  \|x_{F_2} - y_{F_2}\|^2 &=& \ell\left(\frac{1}{2\gamma_1} - \frac{1-\gamma_1^2\ell^2}{2\gamma_1}\right)\left(-\frac{1}{2} + \frac{1+\gamma_1\ell}{2}\right)\\
        && - \left(\frac{1}{2\gamma_1} - \frac{1-\gamma_1^2\ell^2}{2\gamma_1}\right)^2\\
        &=& \frac{\gamma_1^2\ell^4}{4} - \frac{\gamma_1^2\ell^4}{4} = 0.
    \end{eqnarray*}
    
    \noindent \textbf{Proof of \eqref{eq:EG_is_non_cocoercive_tech6}.} We have
    \begin{eqnarray*}
        \ell\langle y_{F_1} - y_{F_2}, \gamma_1 y_{F_1}\rangle -  \|y_{F_1} - y_{F_2}\|^2 &=& \ell\left(\frac{1+\gamma_1\ell}{2\gamma_1} - \frac{1-\gamma_1^2\ell^2}{2\gamma_1}\right)\frac{1+\gamma_1\ell}{2} - \left(\frac{1+\gamma_1\ell}{2\gamma_1} - \frac{1-\gamma_1^2\ell^2}{2\gamma_1}\right)^2\\
        &=& \frac{\ell^2(1+\gamma_1\ell)^2}{4} - \frac{\ell^2(1+\gamma_1\ell)^2}{4} = 0.
    \end{eqnarray*}
    
    That is, inequalities \eqref{eq:EG_is_non_cocoercive_tech1}-\eqref{eq:EG_is_non_cocoercive_tech6} hold and, as a result, there exists $\ell$-cocoercive operator $F$ such that $F(x) = x_{F_1}, F(y) = y_{F_1}, F(x-\gamma_1 x_{F_1}) = x_{F_2}, F(y - \gamma_1 y_{F_1}) = y_{F_2}$. Finally, for any $\gamma_2 > 0$ we have
    \begin{eqnarray*}
        \|x - \gamma_2 F(x - \gamma_1 F(x)) - y + \gamma_2 F(y - \gamma_1 F(y))\|^2 &=& 1 + \gamma_2^2\left(\frac{1}{2\gamma_1} - \frac{1-\gamma_1^2\ell^2}{2\gamma_1}\right)^2\\
        &=& 1 + \frac{\gamma_1^2\gamma_2^2\ell^4}{4} > 1 = \|x-y\|^2.
    \end{eqnarray*}
    In view of Lemma~\ref{lem:l_non_exp_and_cocoercive}, it means that operator $F_{\algname{EG},\gamma_1} = F(\Id - \gamma_1 F)$ is non-cocoercive.
\end{proof}

We emphasize that in the example \eqref{eq:EG_bad_example} one can multiply all points by arbitrary $\alpha > 0$ and get $\|x-y\| = \alpha$: the proof will remain almost unchanged. That is, the points $x,y$ can be arbitrary close/far to each other in the example showing non-cocoercivity of \algname{EG} operator.

\subsection{Proof of Lemma~\ref{lem:EG_norm_is_non_incr}}\label{sec:PEP_proof_of_EG_norm_relax}
As we explain in Section~\ref{sec:EG}, we obtain the proof of Lemma~\ref{lem:EG_norm_is_non_incr} via solving the following problem:
\begin{eqnarray}
    \Delta_{\algname{EG}}(L,\gamma_1,\gamma_2) = &\max & \|F(x^1)\|^2 - \|F(x^0)\|^2 \notag\\
    &\text{s.t.} & F \text{ is monotone and $L$-Lipschitz},\;x^0\in\R^d,\notag\\
    && \|x^0 - x^*\|^2 \leq 1,\notag\\
    && x^{1} = x^0 - \gamma_2 F\left(x^0 - \gamma_1 F(x^0)\right)\notag
\end{eqnarray}
with $\gamma_1 = \gamma_2 = \gamma$. As for \eqref{eq:EG_norm_PEP}, we construct a corresponding SDP and solve it for different values of $L$ and $\gamma$. In these numerical tests, we observed that $\Delta_{\algname{EG}}(L,\gamma_1,\gamma_2) \approx 0$ for all tested pairs of $L$ and $\gamma$ and the dual variables $\lambda_1, \lambda_2, \lambda_3$ that correspond to the constraints
 \begin{eqnarray}
    0 &\le& \frac{1}{\gamma}\langle F(x^k) - F(x^{k+1}), x^k - x^{k+1} \rangle, \notag\\
    0 &\le& \frac{1}{\gamma}\langle F(x^k - \gamma F(x^k)) - F(x^{k+1}), x^k - \gamma F(x^k) - x^{k+1} \rangle, \notag\\
    \|F(x^k - \gamma F(x^k)) - F(x^{k+1})\|^2 &\le& L^2\|x^k - \gamma F(x^k) - x^{k+1}\|^2 \notag
\end{eqnarray}
are always close to the constants $2, \nicefrac{1}{2},$ and $\nicefrac{3}{2}$, while other dual variables are negligible. Although $\lambda_2$ and $\lambda_3$ were sometimes slightly smaller, e.g., sometimes we had $\lambda_2 \approx \nicefrac{3}{5}$ and $\lambda_3 \approx \nicefrac{13}{20}$, we simplified these dependencies and simply summed up the corresponding inequalities with weights $\lambda_1 = 2$, $\lambda_2 = \nicefrac{1}{2}$ and $\lambda_3 = \nicefrac{3}{2}$ respectively. After that it was just needed to rearrange the terms and apply Young's inequality to some inner products. 

The rigorous proof is provided below.
\begin{lemma}[Lemma~\ref{lem:EG_norm_is_non_incr}]\label{lem:EG_norm_is_non_incr_appendix}
    Let $F: \R^d \to \R^d$ be monotone and $L$-Lipschitz, $0 < \gamma \le \nicefrac{1}{\sqrt{2}L}$. Then for all $k\ge 0$ the iterates produced by \eqref{eq:EG_update} satisfy $\|F(x^{k+1})\| \le \|F(x^k)\|$.
\end{lemma}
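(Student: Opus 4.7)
Following the PEP-guided strategy described in the narrative just above the lemma, I would set $u := F(x^k)$, $v := F(\tx^k)$ with $\tx^k := x^k - \gamma F(x^k)$, and $w := F(x^{k+1})$. The update gives $x^k - x^{k+1} = \gamma v$ and $\tx^k - x^{k+1} = \gamma(v-u)$, so monotonicity of $F$ at the pairs $(x^k, x^{k+1})$ and $(\tx^k, x^{k+1})$, together with $L$-Lipschitzness at $(\tx^k, x^{k+1})$, translate into the three scalar inequalities
\begin{equation*}
    \langle u - w, v\rangle \ge 0, \qquad \langle v - w, v - u\rangle \ge 0, \qquad L^2\gamma^2\|v-u\|^2 - \|v-w\|^2 \ge 0,
\end{equation*}
and the target $\|F(x^{k+1})\| \le \|F(x^k)\|$ becomes $\|w\|^2 \le \|u\|^2$.

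The next step is to take the non-negative combination $2\cdot(\text{first}) + \tfrac{1}{2}\cdot(\text{second}) + \tfrac{3}{2}\cdot(\text{third}) \ge 0$, with multipliers $(2, \tfrac{1}{2}, \tfrac{3}{2})$ matching (after accounting for the $\gamma$ factors hidden in the inner products) the PEP dual values $\lambda_1 = 2/\gamma$, $\lambda_2 = 1/(2\gamma)$, $\lambda_3 = 3/2$ flagged in the narrative. Expanding every inner product and collecting monomials in $u,v,w$ produces, after rearrangement,
\begin{equation*}
    \tfrac{3}{2}\|w\|^2 \le \tfrac{3}{2}L^2\gamma^2\|u\|^2 + \bigl(\tfrac{3}{2}L^2\gamma^2 - 1\bigr)\|v\|^2 + \bigl(\tfrac{3}{2} - 3L^2\gamma^2\bigr)\langle u, v\rangle + \tfrac{1}{2}\langle w, v\rangle + \tfrac{1}{2}\langle w, u\rangle.
\end{equation*}

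To eliminate the $v$- and cross-dependence I would then apply Young's inequality $\langle a, b\rangle \le \tfrac{1}{2}\|a\|^2 + \tfrac{1}{2}\|b\|^2$ (i.e.\ \eqref{eq:young} with $\alpha = 1$) to each of the three inner products $\langle u, v\rangle$, $\langle w, v\rangle$, $\langle w, u\rangle$ on the right. The hypothesis $\gamma \le 1/(\sqrt{2}L)$, equivalently $L^2\gamma^2 \le \tfrac{1}{2}$, is used precisely to guarantee that the coefficient $\tfrac{3}{2} - 3L^2\gamma^2$ of $\langle u, v\rangle$ is non-negative, so Young's may be applied in the correct direction. A short accounting then shows that the coefficients of $\|v\|^2$ on the right cancel exactly to $0$, the coefficient of $\|u\|^2$ collapses to $1$, and the new $\|w\|^2$ contributions on the right sum to $\tfrac{1}{2}\|w\|^2$; moving this to the left leaves $\|w\|^2 \le \|u\|^2$, as desired. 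The only genuine obstacle is guessing the multipliers $(2, \tfrac{1}{2}, \tfrac{3}{2})$ and the Young parameter $\beta = 1$, which is exactly the work the PEP dual has already done for us; once these are in hand the argument is purely mechanical, in line with the narrative's promise that ``it was just needed to rearrange the terms and apply Young's inequality to some inner products.''
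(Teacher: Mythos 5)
Your proposal is correct and is essentially identical to the paper's own proof: the same three interpolation inequalities, the same multipliers $(2,\tfrac{1}{2},\tfrac{3}{2})$, the same intermediate inequality after rearrangement, and the same application of Young's inequality with $\alpha=1$ (with the condition $L^2\gamma^2\le\tfrac12$ used exactly where you use it, to keep the coefficient of $\langle u,v\rangle$ non-negative). The final coefficient accounting you describe ($\|v\|^2$ terms cancel, $\|u\|^2$ coefficient becomes $1$, $\|w\|^2$ contributions sum to $\tfrac12$) matches the paper's computation exactly.
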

\begin{proof}
    Since $F$ is monotone and $L$-Lipschitz we have
    \begin{eqnarray}
        0 &\le& \langle F(x^k) - F(x^{k+1}), x^k - x^{k+1} \rangle, \notag\\
        0 &\le& \langle F(x^k - \gamma F(x^k)) - F(x^{k+1}), x^k - \gamma F(x^k) - x^{k+1} \rangle, \notag\\
        \|F(x^k - \gamma F(x^k)) - F(x^{k+1})\|^2 &\le& L^2\|x^k - \gamma F(x^k) - x^{k+1}\|^2. \notag
    \end{eqnarray}
    Using the update rule of \eqref{eq:EG_update} and introducing new notation $\tx^k = x^k - \gamma F(x^k)$, we get
    \begin{eqnarray}
        0 &\le& \langle F(x^k) - F(x^{k+1}), F(\tx^k) \rangle, \notag\\
        0 &\le& \langle F(\tx^k) - F(x^{k+1}), F(\tx^k) - F(x^k) \rangle, \notag\\
        \|F(\tx^k) - F(x^{k+1})\|^2 &\le& L^2\gamma^2\|F(\tx^k) - F(x^k)\|^2. \notag
    \end{eqnarray}
    Summing up these inequalities with weights $\lambda_1 = 2$, $\lambda_2 = \nicefrac{1}{2}$ and $\lambda_3 = \nicefrac{3}{2}$ respectively, we derive
    \begin{eqnarray*}
        \frac{3}{2}\|F(\tx^k) - F(x^{k+1})\|^2 &\le& 2\langle F(x^k) - F(x^{k+1}), F(\tx^k) \rangle + \frac{1}{2}\langle F(\tx^k) - F(x^{k+1}), F(\tx^k) - F(x^k) \rangle\\
        &&\quad + \frac{3L^2\gamma^2}{2}\|F(\tx^k) - F(x^k)\|^2.
    \end{eqnarray*}
    Next, we expand the squared norms and rearrange the terms:
    \begin{eqnarray*}
        \frac{3}{2}\|F(x^{k+1})\|^2 &\le& \left(2 - \frac{1}{2} - 3L^2\gamma^2\right)\langle F(x^k), F(\tx^k) \rangle  + \left(-2 - \frac{1}{2} + 3\right)\langle F(x^{k+1}), F(\tx^k) \rangle\\
        &&\quad + \frac{1}{2}\langle F(x^{k+1}), F(x^k)\rangle + \left(\frac{1}{2} - \frac{3}{2} + \frac{3L^2\gamma^2}{2}\right)\|F(\tx^k)\|^2 + \frac{3L^2\gamma^2}{2}\|F(x^k)\|^2\\
        &=& \left(\frac{3}{2} - 3L^2\gamma^2\right)\langle F(x^k), F(\tx^k) \rangle  + \frac{1}{2}\langle F(x^{k+1}), F(\tx^k) \rangle + \frac{1}{2}\langle F(x^{k+1}), F(x^k)\rangle\\
        &&\quad + \left(\frac{3L^2\gamma^2}{2} - 1\right)\|F(\tx^k)\|^2 + \frac{3L^2\gamma^2}{2}\|F(x^k)\|^2.
    \end{eqnarray*}
    We notice that $\frac{3}{2} - 3L^2\gamma^2 \ge 0$ since $\gamma \le \frac{1}{\sqrt{2}L}$. Therefore, applying Young's inequality \eqref{eq:young} to upper bound the inner products, we derive
    \begin{eqnarray*}
        \frac{3}{2}\|F(x^{k+1})\|^2 &\le& \left(\frac{3}{4} - \frac{3L^2\gamma^2}{2}\right)\left(\|F(x^k)\|^2 + \|F(\tx^k)\|^2\right)  + \frac{1}{4}\left( \|F(x^{k+1})\|^2 + \|F(\tx^k)\|^2 \right)\\
        &&\quad + \frac{1}{4}\left( \|F(x^{k+1})\|^2 + \|F(x^k)\|^2\right) + \left(\frac{3L^2\gamma^2}{2} - 1\right)\|F(\tx^k)\|^2 + \frac{3L^2\gamma^2}{2}\|F(x^k)\|^2\\
        &=& \|F(x^k)\|^2 + \frac{1}{2}\|F(x^{k+1})\|^2.
    \end{eqnarray*}
    Rearranging the terms, we get the result.
\end{proof}

\subsection{Proof of Theorem~\ref{thm:EG_last_iter_conv_non_linear}}

\begin{theorem}[Theorem~\ref{thm:EG_last_iter_conv_non_linear}; Last-iterate convergence of \eqref{eq:EG_update}: non-linear case]\label{thm:EG_last_iter_conv_non_linear_appendix}
    Let $F: \R^d \to \R^d$ be monotone and $L$-Lipschitz. Then for all $K\ge 0$
    \begin{equation}
        \|F(x^K)\|^2 \le \frac{\|x^0 - x^*\|^2}{\gamma^2(1-L^2\gamma^2)(K+1)}, \label{eq:EG_last_iter_conv_non_linear_appendix}
    \end{equation}
    where $x^K$ is produced by \eqref{eq:EG_update} with stepsize $0 < \gamma \le \nicefrac{1}{\sqrt{2}L}$. Moreover,
    \begin{equation}
        \texttt{Gap}_F(x^K) = \max_{y\in\R^d:\|y-x^*\| \le \|x^0 - x^*\|}\langle F(y), x^K - y \rangle \le \frac{2\|x^0 - x^*\|^2}{\gamma\sqrt{1-L^2\gamma^2}\sqrt{K+1}}. \label{eq:EG_last_iter_conv_non_linear_gap_appendix}
    \end{equation}
\end{theorem}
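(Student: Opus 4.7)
The plan is to combine Lemma~\ref{lem:EG_norm_is_non_incr} (the monotone decrease of $\|F(x^k)\|$) with the standard one-step descent identity for \algname{EG} to obtain the $\cO(1/K)$ rate on $\|F(x^K)\|^2$, and then to deduce the $\cO(1/\sqrt{K})$ rate on $\texttt{Gap}_F(x^K)$ via Cauchy--Schwarz together with monotonicity of $F$.

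First I would establish the classical descent inequality for \algname{EG}: for the iterates $x^{k+1} = x^k - \gamma F(\tx^k)$ with $\tx^k = x^k - \gamma F(x^k)$, one expands $\|x^{k+1}-x^*\|^2$, uses monotonicity in the form $\langle F(\tx^k), \tx^k - x^*\rangle \ge 0$ (since $F(x^*)=0$), and applies the $L$-Lipschitzness of $F$ to bound the cross term $\|F(\tx^k) - F(x^k)\|^2 \le L^2\gamma^2\|F(x^k)\|^2$. The outcome is the standard inequality
\begin{equation*}
    \|x^{k+1}-x^*\|^2 \le \|x^k-x^*\|^2 - \gamma^2(1-L^2\gamma^2)\|F(x^k)\|^2.
\end{equation*}
Telescoping from $k=0$ to $K$ yields $\sum_{k=0}^{K} \|F(x^k)\|^2 \le \|x^0-x^*\|^2 / (\gamma^2(1-L^2\gamma^2))$, and therefore the Cesàro average satisfies the desired $\cO(1/K)$ bound. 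The key observation is that Lemma~\ref{lem:EG_norm_is_non_incr} (which requires $\gamma \le 1/(\sqrt{2}L)$) guarantees $\|F(x^K)\|^2$ is no larger than the average $\frac{1}{K+1}\sum_{k=0}^{K}\|F(x^k)\|^2$, giving \eqref{eq:EG_last_iter_conv_non_linear_appendix} directly.

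For the gap bound \eqref{eq:EG_last_iter_conv_non_linear_gap_appendix}, I would use that the above descent inequality also implies $\|x^K - x^*\| \le \|x^0 - x^*\|$, and then estimate, for any $y$ with $\|y-x^*\| \le \|x^0-x^*\|$,
\begin{equation*}
    \langle F(y), x^K - y \rangle \le \langle F(x^K), x^K - y\rangle \le \|F(x^K)\|\,\|x^K - y\| \le 2\|x^0-x^*\|\cdot\|F(x^K)\|,
\end{equation*}
where the first step uses monotonicity and the last uses the triangle inequality combined with the descent property. Taking the supremum over admissible $y$ and plugging in \eqref{eq:EG_last_iter_conv_non_linear_appendix} yields exactly \eqref{eq:EG_last_iter_conv_non_linear_gap_appendix}.

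The nontrivial ingredient is Lemma~\ref{lem:EG_norm_is_non_incr}, which has already been proved via the PEP-guided argument in the preceding section; once its monotone decrease is in hand, the remaining steps of this theorem are straightforward textbook \algname{EG} manipulations, so I would not expect any further obstacle. The only subtlety is to verify that $1 - L^2\gamma^2 > 0$ for the admissible range $\gamma \le 1/(\sqrt{2}L)$, which is immediate.
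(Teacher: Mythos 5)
Your proposal is correct and follows essentially the same route as the paper: the paper also derives the one-step descent inequality $\gamma^2(1-L^2\gamma^2)\|F(x^k)\|^2 \le \|x^k-x^*\|^2 - \|x^{k+1}-x^*\|^2$ (by reusing the computation from its earlier non-expansiveness lemma with $y=x^*$), telescopes it, invokes Lemma~\ref{lem:EG_norm_is_non_incr} to pass from the Cesàro average to the last iterate, and then obtains the gap bound via monotonicity, Cauchy--Schwarz, and the non-increase of $\|x^k-x^*\|$. No gaps.
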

\begin{proof}
    We notice that in the proof of Lemma~\ref{lem:cocoercivity_of_EG_linear_non_spec} we get
    \begin{eqnarray}
        \|\hx - \hy\|^2 &\overset{\eqref{eq:EG_non_exp_non_spectral_technical_2}}{\le}& \|x-y\|^2 + \gamma^2\left(\|F(\tx) - F(\ty) - F(x) + F(y)\|^2 - \|F(x) - F(y)\|^2\right) \notag
    \end{eqnarray}
    without using linearity of $F$. Here, $x$ and $y$ are arbitrary points in $\R^d$ and
    \begin{equation*}
        \tx = x - \gamma F(x),\quad \ty = y - \gamma F(y),\quad \hx = x - \gamma F(\tx),\quad \hy = y - \gamma F(\ty).
    \end{equation*}
    Taking $y = x^*$ and $x = x^k$ we get $\ty = \hy = x^*$, $\hx = x^{k+1}$, and
    \begin{eqnarray*}
        \|x^{k+1} - x^*\|^2 &\le& \|x^k - x^*\|^2 + \gamma^2\|F(x^k-\gamma F(x^k)) - F(x^k)\| - \gamma^2\|F(x^k)\|^2\\
        &\overset{\eqref{eq:L_lip_def}}{\le}& \|x^k-x^*\|^2 + L^2\gamma^4\|F(x^k)\|^2 - \gamma^2\|F(x^k)\|^2 
    \end{eqnarray*}
    implying
    \begin{equation}
        \gamma^2(1-L^2\gamma^2)\|F(x^k)\|^2 \le \|x^k - x^*\|^2 - \|x^{k+1} - x^*\|^2.\label{eq:EG_non_increasing_distance}
    \end{equation}
    Summing up these inequalities for $k=0,1,\ldots, K$ and dividing the result by $\gamma^2(1-L^2\gamma^2)(K+1)$, we obtain
    \begin{eqnarray*}
        \frac{1}{K+1}\sum\limits_{k=0}^{K}\|F(x^k)\|^2 &\le& \frac{1}{\gamma^2(1-L^2\gamma^2)(K+1)}\sum\limits_{k=0}^{K}\left(\|x^k - x^*\|^2 - \|x^{k+1} - x^*\|^2\right)\\
        &=& \frac{\|x^0 - x^*\|^2 - \|x^{K+1} - x^*\|^2}{\gamma^2(1-L^2\gamma^2)(K+1)}\\
        &\le& \frac{\|x^0 - x^*\|^2}{\gamma^2(1-L^2\gamma^2)(K+1)}.
    \end{eqnarray*}
    Next, applying Lemma~\ref{lem:EG_norm_is_non_incr}, we conclude
    \begin{equation*}
        \|F(x^K)\|^2 \le \frac{1}{K+1}\sum\limits_{k=0}^{K}\|F(x^k)\|^2 \le \frac{\|x^0 - x^*\|^2}{\gamma^2(1-L^2\gamma^2)(K+1)},
    \end{equation*}
    which gives \eqref{eq:EG_last_iter_conv_non_linear_appendix}. Finally, we notice that $\|x^{k+1} - x^*\| \le \|x^k - x^*\|$, which can be seen from \eqref{eq:EG_non_increasing_distance}. Therefore, using monotonicity of $F$ and Cauchy-Schwarz inequality, we derive
    \begin{eqnarray*}
        \texttt{Gap}_F(x^K) &=& \max_{y\in\R^d:\|y-x^*\| \le \|x^0 - x^*\|}\langle F(y), x^K - y \rangle\\
        &\overset{\eqref{eq:monotonicity_def}}{\leq}& \max_{y\in\R^d:\|y-x^*\| \le \|x^0 - x^*\|}\langle F(x^K), x^K - y \rangle\\
        &\leq& \|F(x^K)\|\cdot\max_{y\in\R^d:\|y-x^*\| \le \|x^0 - x^*\|} \|x^K - y\|\\ 
        &\overset{\eqref{eq:EG_non_increasing_distance}}{\leq}& 2\|F(x^K)\|\cdot\|x^0 - x^*\|\\
        &\overset{\eqref{eq:EG_last_iter_conv_non_linear_appendix}}{\leq}& \frac{2\|x^0 - x^*\|^2}{\gamma\sqrt{1-L^2\gamma^2}\sqrt{K+1}},
    \end{eqnarray*}
    which finishes the proof.
\end{proof}

\newpage

\section{MISSING PROOFS FROM SECTION~\ref{sec:OG}}

\subsection{Proof of Theorem~\ref{thm:F_OG_is_not_star_cocoercive}}

For convenience, we derive non-cocoercivity of $F_{\algname{OG},\gamma}$ and $F_{\algname{EFTP},\gamma}$ separately.

\subsubsection{Non-Cocoercivity of $F_{\algname{OG},\gamma}$}

Before we provide the proof, we state the following technical lemma.

\begin{lemma}\label{lem:F_OG_linearity}
    Let operator $F: \R^d \to \R^d$ be linear, $x^*$ be such that $F(x^*) = 0$, and $\gamma > 0$. Then, operator $F_{\algname{OG},\gamma}$ is linear and $F_{\algname{OG},\gamma}(z^*) = 0$ for $z^* = ((x^*)^\top, (x^*)^\top)^\top$.
\end{lemma}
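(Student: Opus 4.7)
The plan is to verify both claims directly from the definition of $F_{\algname{OG},\gamma}$ given in \eqref{eq:OG_update_matrix}. Writing $z = (x_1^\top, x_2^\top)^\top \in \R^{2d}$, I would first expand
\begin{equation*}
F_{\algname{OG},\gamma}(z) = \begin{pmatrix} 2F(x_1) - F(x_2) \\ -\tfrac{1}{\gamma}x_1 + \tfrac{1}{\gamma}x_2 \end{pmatrix},
\end{equation*}
so that the lemma reduces to two straightforward checks on this explicit formula.

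For linearity, I would take arbitrary $z = (x_1^\top,x_2^\top)^\top$, $z' = (x_1'^\top,x_2'^\top)^\top$ in $\R^{2d}$ and scalars $\alpha,\beta\in\R$, and use the linearity of $F$ componentwise. The upper block gives $2F(\alpha x_1 + \beta x_1') - F(\alpha x_2 + \beta x_2') = \alpha(2F(x_1) - F(x_2)) + \beta(2F(x_1') - F(x_2'))$ by linearity of $F$, while the lower block is clearly linear in $(x_1,x_2)$ since $\Id$ is linear. Combining the two blocks gives $F_{\algname{OG},\gamma}(\alpha z + \beta z') = \alpha F_{\algname{OG},\gamma}(z) + \beta F_{\algname{OG},\gamma}(z')$.

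For the second claim, I would substitute $z^* = ((x^*)^\top,(x^*)^\top)^\top$ into the same formula. The upper block becomes $2F(x^*) - F(x^*) = F(x^*) = 0$ by the hypothesis $F(x^*) = 0$, and the lower block becomes $-\tfrac{1}{\gamma}x^* + \tfrac{1}{\gamma}x^* = 0$. Hence $F_{\algname{OG},\gamma}(z^*) = 0$.

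There is no real obstacle here: the lemma is a direct unpacking of definitions, and its role is presumably to enable the use of Lemma~\ref{lem:cocoer_equiv_star_cocoer_for_lin_ops} (equivalence of cocoercivity and star-cocoercivity for linear operators with a zero) in the subsequent proof that $F_{\algname{OG},\gamma}$ is not star-cocoercive. The only thing to be a little careful about is that ``linearity'' of $F_{\algname{OG},\gamma}$ here means linear in the ambient space $\R^{2d}$, which is exactly what the blockwise computation establishes.
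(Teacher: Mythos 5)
Your proposal is correct and follows essentially the same route as the paper's proof: both expand $F_{\algname{OG},\gamma}$ blockwise from its matrix definition, verify linearity componentwise using the linearity of $F$, and check $F_{\algname{OG},\gamma}(z^*)=0$ by direct substitution. Nothing is missing.
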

\begin{proof}
    We start with proving linearity. Consider arbitrary
    \begin{eqnarray*}
        \alpha,\beta\in\R, \quad x,y,x',y'\in\R^d,\quad z = \begin{pmatrix}x\\ y \end{pmatrix}, z' = \begin{pmatrix}x'\\ y' \end{pmatrix} \in \R^{2d}.
    \end{eqnarray*}
    Then
    \begin{eqnarray*}
        F_{\algname{OG},\gamma}(\alpha z + \beta z') &=& \begin{pmatrix} 2F & -F \\ -\frac{1}{\gamma}\Id & \frac{1}{\gamma}\Id \end{pmatrix} \begin{pmatrix}\alpha x + \beta x' \\ \alpha y + \beta y' \end{pmatrix} = \begin{pmatrix}2F(\alpha x + \beta x') - F(\alpha y + \beta y') \\ -\frac{1}{\gamma}(\alpha x + \beta x') + \frac{1}{\gamma}(\alpha y + \beta y') \end{pmatrix}\\
        &=& \alpha \begin{pmatrix}2F(x) - F(y) \\ -\frac{1}{\gamma}x + \frac{1}{\gamma}y \end{pmatrix} + \beta \begin{pmatrix}2F(x') - F(y') \\ -\frac{1}{\gamma}x' + \frac{1}{\gamma}y' \end{pmatrix}\\
        &=& \alpha\begin{pmatrix} 2F & -F \\ -\frac{1}{\gamma}\Id & \frac{1}{\gamma}\Id \end{pmatrix} \begin{pmatrix}x \\ y\end{pmatrix} + \beta\begin{pmatrix} 2F & -F \\ -\frac{1}{\gamma}\Id & \frac{1}{\gamma}\Id \end{pmatrix} \begin{pmatrix}x' \\ y'\end{pmatrix}\\
        &=& \alpha F_{\algname{OG},\gamma}(z) + \beta F_{\algname{OG},\gamma}(z'),
    \end{eqnarray*}
    i.e., $F_{\algname{OG},\gamma}$ is linear. Next, let $F(x^*) = 0$ for some $x^*$. For
    \begin{equation*}
        z^* = \begin{pmatrix} x^* \\ x^* \end{pmatrix}
    \end{equation*}
    we derive that $F_{\algname{OG},\gamma}(z^*) = 0$:
    \begin{eqnarray*}
        F_{\algname{OG},\gamma}(z^*) = \begin{pmatrix} 2F & -F \\ -\frac{1}{\gamma}\Id & \frac{1}{\gamma}\Id \end{pmatrix} \begin{pmatrix}x^* \\ x^* \end{pmatrix} = \begin{pmatrix}2F(x^*) - F(x^*) \\ -\frac{1}{\gamma}x^* + \frac{1}{\gamma}x^* \end{pmatrix} = \begin{pmatrix}0 \\ 0 \end{pmatrix} = 0.
    \end{eqnarray*}
\end{proof}

Using this lemma, we establish the following result.
\begin{theorem}[Non-cocoercivity of $F_{\algname{OG},\gamma}$]\label{thm:F_OG_is_not_star_cocoercive_appendix}
    Let the linear operator $F(x) = \mA x$ be monotone and $L$-Lipschitz. Assume that $\Sp(\nabla F(x)) = \Sp(\mA)$ contains at least one eigenvalue $\hat{\lambda}$ such that $\Re(\hat{\lambda}) = 0$ and $\Im(\hat{\lambda}) \neq 0$.  Then, for any $\ell > 0$ and $\gamma > 0$ operator $F_{\algname{OG},\gamma}$ is not $\ell$-star-cocoercive around $x^*$.
\end{theorem}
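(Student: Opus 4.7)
The plan is to reduce the claim to showing that a linear operator is not cocoercive, and then exhibit an explicit witness vector. By Lemma~\ref{lem:F_OG_linearity}, $F_{\algname{OG},\gamma}$ is linear and vanishes at $z^\ast = ((x^\ast)^\top,(x^\ast)^\top)^\top$, so Lemma~\ref{lem:cocoer_equiv_star_cocoer_for_lin_ops} reduces $\ell$-star-cocoercivity around $z^\ast$ to $\ell$-cocoercivity. Hence it is enough to produce a single $u=(u_1,u_2)\in\R^{2d}$ with $F_{\algname{OG},\gamma} u\neq 0$ and $\langle F_{\algname{OG},\gamma} u,u\rangle < 0$; such a $u$ violates the cocoercivity inequality $\|F_{\algname{OG},\gamma} u\|^2 \leq \ell\,\langle F_{\algname{OG},\gamma} u,u\rangle$ for every $\ell>0$.

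To locate such a $u$, I would first extract structure from the purely imaginary eigenvalue. Writing the complex eigenvector of $\mA$ for $\hat\lambda = i\omega$ (with $\omega=\Im(\hat\lambda)\neq 0$) as $a + i b$, $a,b\in\R^d$, and matching real and imaginary parts of $\mA(a+ib) = i\omega(a+ib)$ yields $\mA a = -\omega b$ and $\mA b = \omega a$. A short calculation then gives
\[
\langle \mA(\alpha a + \beta b), \alpha a + \beta b\rangle = \omega\bigl[\alpha\beta(\|a\|^2-\|b\|^2) + (\beta^2-\alpha^2)\langle a,b\rangle\bigr],
\]
a quadratic form in $(\alpha,\beta)$ of zero trace. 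Monotonicity of $\mA$ forces this form to be positive semi-definite, so it must vanish identically, giving $\langle a,b\rangle=0$ and $\|a\|=\|b\|$. After scaling $a$ and $b$ by the same factor I may assume $\|a\|=\|b\|=1$.

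With $a,b$ in hand, set $u_1 = a$ and $u_2 = (\omega\gamma/2)\,b$. Using the block form $F_{\algname{OG},\gamma} u = (2\mA u_1 - \mA u_2,\,\gamma^{-1}(u_2 - u_1))^\top$ together with the identities $\mA a = -\omega b$, $\mA b = \omega a$, $\langle a,b\rangle=0$ and $\|a\|=\|b\|=1$, direct expansion yields $\langle F_{\algname{OG},\gamma} u,u\rangle = -\omega^2\gamma/4 < 0$, while the first block $-2\omega b - (\omega^2\gamma/2)\,a$ of $F_{\algname{OG},\gamma} u$ is nonzero because $a$ and $b$ are linearly independent and $\omega\neq 0$. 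The main obstacle is pinpointing this witness: a purely spectral approach via Lemma~\ref{prop:spectum_ell} does not suffice, since that characterization is only necessary, and a direct computation shows that every eigenvalue of $\nabla F_{\algname{OG},\gamma}$ has strictly positive real part. The non-monotonicity (and hence non-cocoercivity) of $F_{\algname{OG},\gamma}$ is only visible once one exploits the non-normal block structure of the Jacobian, which is exactly what the asymmetric scaling $u_2 = (\omega\gamma/2)\,b$ is designed to capture.
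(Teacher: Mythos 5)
Your proposal is correct, and it takes a genuinely different route from the paper's. Both proofs begin identically, using Lemma~\ref{lem:F_OG_linearity} and Lemma~\ref{lem:cocoer_equiv_star_cocoer_for_lin_ops} to reduce the claim to non-cocoercivity of the linear operator $F_{\algname{OG},\gamma}$. From there the paper argues via non-expansiveness (Lemma~\ref{lem:l_non_exp_and_cocoercive}): it invokes Lemma~\ref{prop:spectum_ell} only to conclude that $F$ itself is not $\nicefrac{\ell}{2}$-cocoercive, picks witnesses $x,x'$ for that failure, and checks that $\Id - \frac{2}{\ell}F_{\algname{OG},\gamma}$ expands the pair $z=(x^\top,(x^*)^\top)^\top$, $z'=((x')^\top,(x^*)^\top)^\top$, the second block contributing the decisive extra term $\frac{4}{\ell^2\gamma^2}\|x-x'\|^2$; this has to be redone for each $\ell$. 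You instead exhibit a single vector $u=(a^\top,(\nicefrac{\omega\gamma}{2})b^\top)^\top$ with $\langle F_{\algname{OG},\gamma}u,u\rangle=-\nicefrac{\omega^2\gamma}{4}<0$ and $F_{\algname{OG},\gamma}u\neq 0$, which disposes of all $\ell>0$ at once and proves the strictly stronger statement that $F_{\algname{OG},\gamma}$ is not even monotone. I checked the algebra: $\mA a=-\omega b$, $\mA b=\omega a$, the zero-trace quadratic form argument forcing $\langle a,b\rangle=0$ and $\|a\|=\|b\|$, and the final inner-product computation are all correct. Your construction is more explicit and self-contained (it bypasses Lemma~\ref{prop:spectum_ell} and the non-expansiveness characterization entirely), at the cost of a slightly longer setup; the paper's template has the advantage of transferring verbatim to $F_{\algname{EFTP},\gamma}$. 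Your closing remark is also well taken and worth keeping: since $\Sp(\nabla F_{\algname{OG},\gamma})$ lies in the open right half-plane, the failure of cocoercivity here is a non-normality phenomenon that no purely spectral necessary condition can detect, which is precisely why an explicit vector witness (or the paper's explicit pair of points) is required.
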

\begin{proof}
    In view of Lemma~\ref{lem:cocoer_equiv_star_cocoer_for_lin_ops}, it is sufficient to show that $F_{\algname{OG},\gamma}$ is not $\ell$-cocoercive for any positive $\ell, \gamma > 0$. Since $\Sp(\mA)$ contains $\hat{\lambda}$ with $\Re(\hat{\lambda}) = 0$ and $\Im(\hat{\lambda}) \neq 0$, $\Sp(\mA)$ is not contained in any disk centered in $\nicefrac{\ell}{2}$ and of radius $\nicefrac{\ell}{2}$. Therefore, due to Lemma~\ref{prop:spectum_ell} operator $F$ is not $\ell$-cocoercive for any $\ell$. Let us fix arbitrary $\ell > 0$ and $\gamma > 0$. There exist points $x, x'$ such that
    \begin{equation}
        \|F(x) - F(x')\|^2 > \frac{\ell}{2}\langle x - x', F(x) - F(x') \rangle \label{eq:F_OG_is_not_star_cocoercive_tech_1}
    \end{equation}
    Let us show that $F_{\algname{OG},\gamma}$ is not $\ell$-cocoercive. In view of Lemma~\ref{lem:l_non_exp_and_cocoercive}, it is sufficient to show that $\Id - \frac{2}{\ell}F_{\algname{OG},\gamma}$ is not non-expansive. Consider the following points:
    \begin{equation*}
        z = \begin{pmatrix}x\\ x^*\end{pmatrix},\quad \hat{z} = z - \frac{2}{\ell}F_{\algname{OG},\gamma}(z),\quad z' = \begin{pmatrix}x'\\ x^*\end{pmatrix},\quad \hat{z}' = z' - \frac{2}{\ell}F_{\algname{OG},\gamma}(z').
    \end{equation*}
    Then
    \begin{eqnarray*}
        \|\hat z - \hat z'\|^2 &=& \left\|z - \frac{2}{\ell}F_{\algname{OG},\gamma}(z) - z' + \frac{2}{\ell}F_{\algname{OG},\gamma}(z')\right\|^2\\
        &=& \left\|\begin{pmatrix}x\\ x^* \end{pmatrix} - \frac{2}{\ell}\begin{pmatrix} 2F & -F \\ -\frac{1}{\gamma}\Id & \frac{1}{\gamma}\Id \end{pmatrix}\begin{pmatrix}x\\ x^* \end{pmatrix} - \begin{pmatrix}x'\\ x^* \end{pmatrix} + \frac{2}{\ell}\begin{pmatrix} 2F & -F \\ -\frac{1}{\gamma}\Id & \frac{1}{\gamma}\Id \end{pmatrix}\begin{pmatrix}x'\\ x^* \end{pmatrix}\right\|^2\\
        &=& \left\|\begin{pmatrix}x - \frac{4}{\ell}F(x) - x' + \frac{4}{\ell} F(x')\\ x^* + \frac{2}{\ell\gamma}x - \frac{2}{\ell\gamma}x^* - x^* - \frac{2}{\ell\gamma}x' + \frac{2}{\ell\gamma}x^* \end{pmatrix}\right\|^2\\
        &=& \left\|x - x' - \frac{4}{\ell}\left(F(x) - F(x')\right)\right\|^2 + \frac{4}{\ell^2\gamma^2}\|x-x'\|^2 \\
        &=& \left(1 + \frac{4}{\ell^2\gamma^2}\right)\|x-x'\|^2  + \frac{16}{\ell^2}\left(\|F(x) - F(x')\|^2 - \frac{\ell}{2}\langle x - x', F(x) - F(x') \rangle\right)\\
        &\overset{\eqref{eq:F_OG_is_not_star_cocoercive_tech_1}}{>}& \left(1 + \frac{4}{\ell^2\gamma^2}\right)\|x-x'\|^2 > \|x - x'\|^2 = \|z - z'\|^2, 
    \end{eqnarray*}
    i.e., $\Id - \frac{2}{\ell}F_{\algname{OG},\gamma}$ is not non-expansive.
\end{proof}

\subsubsection{Non-Cocoercivity of $F_{\algname{EFTP},\gamma}$}

First of all, for any
\begin{equation*}
    z = \begin{pmatrix}x \\ y \end{pmatrix}
\end{equation*}
one can rewrite $F_{\algname{EFTP}, \gamma}(z)$ as
\begin{equation*}
    F_{\algname{EFTP}, \gamma}(z) = \begin{pmatrix}F\left(x - \gamma F(y)\right)\\ \frac{1}{\gamma}(y - x) + F(y) \end{pmatrix}.
\end{equation*}
Using this, we derive the following technical result.
\begin{lemma}\label{lem:F_EFTP_linearity}
    Let operator $F: \R^d \to \R^d$ be linear, $x^*$ be such that $F(x^*) = 0$, and $\gamma > 0$. Then, operator $F_{\algname{EFTP},\gamma}$ is linear and $F_{\algname{EFTP},\gamma}(z^*) = 0$ for $z^* = ((x^*)^\top, (x^*)^\top)^\top$.
\end{lemma}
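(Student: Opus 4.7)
The plan is to mimic directly the proof of Lemma~\ref{lem:F_OG_linearity}, exploiting the explicit block representation of $F_{\algname{EFTP},\gamma}$ displayed just above the lemma: for $z = (x^\top, y^\top)^\top$,
\begin{equation*}
    F_{\algname{EFTP}, \gamma}(z) = \begin{pmatrix}F\left(x - \gamma F(y)\right)\\ \frac{1}{\gamma}(y - x) + F(y) \end{pmatrix}.
\end{equation*}
Since $F$ is assumed linear, each entry on the right is a linear combination of expressions of the form $F(\cdot)$, $F(F(\cdot))$, and $\Id$ applied to $x$ or $y$, all of which are individually linear.

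For the linearity claim, I would fix arbitrary $\alpha,\beta\in\R$ and $z = (x^\top,y^\top)^\top$, $z' = ((x')^\top,(y')^\top)^\top$, and compute
\begin{equation*}
    F_{\algname{EFTP}, \gamma}(\alpha z + \beta z') = \begin{pmatrix}F\bigl((\alpha x + \beta x') - \gamma F(\alpha y + \beta y')\bigr)\\ \frac{1}{\gamma}\bigl((\alpha y + \beta y') - (\alpha x + \beta x')\bigr) + F(\alpha y + \beta y') \end{pmatrix}.
\end{equation*}
Applying linearity of $F$ inside the first component gives $F(\alpha y + \beta y') = \alpha F(y) + \beta F(y')$, so the argument of the outer $F$ becomes $\alpha(x - \gamma F(y)) + \beta(x' - \gamma F(y'))$; applying linearity once more distributes the outer $F$. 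The second component is already manifestly linear in $(x,y,x',y')$. Collecting terms yields $\alpha F_{\algname{EFTP},\gamma}(z) + \beta F_{\algname{EFTP},\gamma}(z')$, exactly as in the \algname{OG} case.

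For the second claim, plug in $z^* = ((x^*)^\top,(x^*)^\top)^\top$: the top block equals $F(x^* - \gamma F(x^*)) = F(x^* - 0) = F(x^*) = 0$, and the bottom block equals $\frac{1}{\gamma}(x^* - x^*) + F(x^*) = 0$, so $F_{\algname{EFTP},\gamma}(z^*) = 0$.

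There is no real obstacle; the only mildly non-obvious point is that the nested application $F(x - \gamma F(y))$ is linear in $(x,y)$, which follows from applying linearity of $F$ twice. The proof is essentially a direct computation and parallels Lemma~\ref{lem:F_OG_linearity} line by line.
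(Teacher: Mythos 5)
Your proof is correct and follows exactly the same route as the paper's: expand $F_{\algname{EFTP},\gamma}(\alpha z + \beta z')$ via the block representation, apply linearity of $F$ twice to handle the nested term $F(x-\gamma F(y))$, and verify the zero at $z^*$ by direct substitution. No gaps.
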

\begin{proof}
    We start with proving linearity. Consider arbitrary
    \begin{eqnarray*}
        \alpha,\beta\in\R, \quad x,y,x',y'\in\R^d,\quad z = \begin{pmatrix}x\\ y \end{pmatrix}, z' = \begin{pmatrix}x'\\ y' \end{pmatrix} \in \R^{2d}.
    \end{eqnarray*}
    Then
    \begin{eqnarray*}
        F_{\algname{EFTP},\gamma}(\alpha z + \beta z') &=& \begin{pmatrix}F\left(\alpha x + \beta x' - \gamma F(\alpha y + \beta y')\right)\\ \frac{1}{\gamma}\left(\alpha y + \beta y' - \alpha x - \beta x'\right) + F(\alpha y + \beta y') \end{pmatrix}\\
        &=& \begin{pmatrix}F\left(\alpha x - \gamma \alpha F(y) + \beta x' - \gamma \beta F(y')\right)\\ \frac{1}{\gamma}\left(\alpha y  - \alpha x\right) + \alpha F(y) + \frac{1}{\gamma}\left(\beta y'  - \beta x'\right) + \beta F(y') \end{pmatrix}\\
        &=& \alpha \begin{pmatrix}F\left(x - \gamma F(y)\right)\\ \frac{1}{\gamma}(y - x) + F(y) \end{pmatrix} + \beta \begin{pmatrix}F\left(x' - \gamma F(y')\right)\\ \frac{1}{\gamma}(y' - x') + F(y') \end{pmatrix}\\
        &=& \alpha F_{\algname{EFTP},\gamma}(z) + \beta F_{\algname{EFTP},\gamma}(z'),
    \end{eqnarray*}
    i.e., $F_{\algname{EFTP},\gamma}$ is linear. Next, let $F(x^*) = 0$ for some $x^*$. For
    \begin{equation*}
        z^* = \begin{pmatrix} x^* \\ x^* \end{pmatrix}
    \end{equation*}
    we derive that $F_{\algname{EFTP},\gamma}(z^*) = 0$:
    \begin{eqnarray*}
        F_{\algname{OG},\gamma}(z^*) = \begin{pmatrix}F\left(x^* - \gamma F(x^*)\right)\\ \frac{1}{\gamma}(x^* - x^*) + F(y^*) \end{pmatrix} = \begin{pmatrix}F(x^*)\\ 0 \end{pmatrix} = \begin{pmatrix}0\\ 0 \end{pmatrix} = 0.
    \end{eqnarray*}
\end{proof}

Using this lemma, we establish the following result.
\begin{theorem}[Non-cocoercivity of $F_{\algname{EFTP},\gamma}$]\label{thm:F_EFTP_is_not_star_cocoercive_appendix}
    Let the linear operator $F(x) = \mA x$ be monotone and $L$-Lipschitz. Assume that $\Sp(\nabla F(x)) = \Sp(\mA)$ contains at least one eigenvalue $\hat{\lambda}$ such that $\Re(\hat{\lambda}) = 0$ and $\Im(\hat{\lambda}) \neq 0$.  Then, for any $\ell > 0$ and $\gamma > 0$ operator $F_{\algname{EFTP},\gamma}$ is not $\ell$-star-cocoercive around $x^*$.
\end{theorem}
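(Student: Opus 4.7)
The plan is to adapt the argument used just above for $F_{\algname{OG},\gamma}$. Since $F_{\algname{EFTP},\gamma}$ is linear (Lemma~\ref{lem:F_EFTP_linearity}), Lemma~\ref{lem:cocoer_equiv_star_cocoer_for_lin_ops} reduces $\ell$-star-cocoercivity to plain $\ell$-cocoercivity, and Lemma~\ref{lem:l_non_exp_and_cocoercive} in turn reduces cocoercivity to the non-expansiveness of the map $T_\ell := \Id - \tfrac{2}{\ell}F_{\algname{EFTP},\gamma}$. So the goal is to exhibit, for arbitrary $\ell>0$ and $\gamma>0$, points $z \neq z'$ with $\|T_\ell(z) - T_\ell(z')\|>\|z-z'\|$.

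The first step is to pick an arbitrary $y \in \R^d$ and probe $T_\ell$ on pairs of the form $z = (x^\top, y^\top)^\top$, $z' = (x'^\top, y^\top)^\top$. Using the closed form $F_{\algname{EFTP},\gamma}(z) = (F(x - \gamma F(y))^\top,\ (\tfrac{1}{\gamma}(y-x) + F(y))^\top)^\top$ together with the linearity of $F$, the $F(y)$ terms cancel and one finds
\begin{equation*}
F_{\algname{EFTP},\gamma}(z) - F_{\algname{EFTP},\gamma}(z') = \begin{pmatrix} F(x) - F(x') \\ \tfrac{1}{\gamma}(x' - x) \end{pmatrix}.
\end{equation*}
Expanding $\|T_\ell(z) - T_\ell(z')\|^2$ and subtracting $\|z - z'\|^2 = \|x - x'\|^2$ then yields
\begin{equation*}
\|T_\ell(z) - T_\ell(z')\|^2 - \|z - z'\|^2 = \tfrac{4}{\ell^2\gamma^2}\|x - x'\|^2 + \tfrac{4}{\ell^2}\|F(x) - F(x')\|^2 - \tfrac{4}{\ell}\langle x - x', F(x) - F(x') \rangle.
\end{equation*}
The extra positive term $\tfrac{4}{\ell^2\gamma^2}\|x - x'\|^2$, coming from the second coordinate block, is what makes the situation strictly worse than for $F$ itself and gives us room to maneuver.

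The key step is then to pick $w := x - x'$ so that $\langle w, \mA w\rangle = 0$ with $w \neq 0$. I would take $w = u$, where $u + iv$ (with $u,v \in \R^d$) is an eigenvector of $\mA$ associated to $\hat\lambda = i\beta$, $\beta \neq 0$. From $\mA(u+iv) = i\beta(u+iv)$ one reads off $\mA u = -\beta v$ and $\mA v = \beta u$. Evaluating the monotonicity inequality $\langle au+bv,\mA(au+bv)\rangle \ge 0$ on the real plane $\mathrm{span}(u,v)$ gives $\beta\bigl[(b^2-a^2)\langle u,v\rangle + ab(\|u\|^2 - \|v\|^2)\bigr] \ge 0$ for all $a,b \in \R$; specializing to $a=b$ and $a=-b$ forces $\|u\|=\|v\|$, while $b=0$ and $a=0$ force $\langle u,v\rangle = 0$. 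Therefore $\langle u, \mA u\rangle = -\beta\langle u,v\rangle = 0$ and $\|\mA u\|^2 = \beta^2\|v\|^2 = \beta^2\|u\|^2 > 0$.

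Substituting $x - x' = u$ into the boxed identity collapses the right-hand side to $\tfrac{4}{\ell^2\gamma^2}\|u\|^2 + \tfrac{4\beta^2}{\ell^2}\|u\|^2 > 0$, hence $\|T_\ell(z) - T_\ell(z')\| > \|z - z'\|$. This contradicts non-expansiveness of $T_\ell$ and, by the reduction above, yields the non-$\ell$-star-cocoercivity of $F_{\algname{EFTP},\gamma}$ for every $\ell>0$ and $\gamma>0$. The only delicate part is the spectral/geometric lemma that a purely imaginary eigenvalue of a monotone real linear operator forces the real and imaginary parts of the eigenvector to be orthogonal and of equal norm; everything else is bookkeeping.
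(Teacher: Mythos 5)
Your proof is correct, and its overall architecture coincides with the paper's: reduce $\ell$-star-cocoercivity to $\ell$-cocoercivity via linearity (Lemma~\ref{lem:cocoer_equiv_star_cocoer_for_lin_ops}), then to non-expansiveness of $\Id - \tfrac{2}{\ell}F_{\algname{EFTP},\gamma}$ (Lemma~\ref{lem:l_non_exp_and_cocoercive}), and probe on pairs that differ only in the first block so that the second block contributes the extra strictly positive term $\tfrac{4}{\ell^2\gamma^2}\|x-x'\|^2$. The one place you diverge is in producing the pair $x,x'$ on which the cocoercivity-type expression fails: the paper invokes the spectral characterization (Lemma~\ref{prop:spectum_ell}) to assert the existence of $x,x'$ with $\|F(x)-F(x')\|^2 > \ell\langle x-x',F(x)-F(x')\rangle$ and then regroups terms, whereas you explicitly take $x-x'=u$ with $u+iv$ an eigenvector for $\hat\lambda = i\beta$ and derive $\langle u,v\rangle=0$, $\|u\|=\|v\|$ directly from monotonicity, giving $\langle u,\mA u\rangle=0$ and $\|\mA u\|^2=\beta^2\|u\|^2>0$. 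Your route is self-contained and in effect re-proves the relevant special case of the forward direction of Lemma~\ref{prop:spectum_ell}; the paper's is shorter by citing that lemma. Both arguments are sound, and your spectral sub-lemma (orthogonality and equal norms of the real and imaginary parts of an eigenvector with purely imaginary eigenvalue, for a monotone real linear map) is verified correctly by your choice of test directions $(a,b)$.
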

\begin{proof}
    In view of Lemma~\ref{lem:cocoer_equiv_star_cocoer_for_lin_ops}, it is sufficient to show that $F_{\algname{EFTP},\gamma}$ is not $\ell$-cocoercive for any positive $\ell, \gamma > 0$. Since $\Sp(\mA)$ contains $\hat{\lambda}$ with $\Re(\hat{\lambda}) = 0$ and $\Im(\hat{\lambda}) \neq 0$, $\Sp(\mA)$ is not contained in any disk centered in $\nicefrac{\ell}{2}$ and of radius $\nicefrac{\ell}{2}$. Therefore, due to Lemma~\ref{prop:spectum_ell} operator $F$ is not $\ell$-cocoercive for any $\ell$. Let us fix arbitrary $\ell > 0$ and $\gamma > 0$. There exist points $x, x'$ such that
    \begin{equation}
        \|F(x) - F(x')\|^2 > \ell\langle x - x', F(x) - F(x') \rangle \label{eq:F_EFTP_is_not_star_cocoercive_tech_1}
    \end{equation}
    Let us show that $F_{\algname{EFTP},\gamma}$ is not $\ell$-cocoercive. In view of Lemma~\ref{lem:l_non_exp_and_cocoercive}, it is sufficient to show that $\Id - \frac{2}{\ell}F_{\algname{EFTP},\gamma}$ is not non-expansive. Consider the following points:
    \begin{equation*}
        z = \begin{pmatrix}x\\ x^*\end{pmatrix},\quad \hat{z} = z - \frac{2}{\ell}F_{\algname{EFTP},\gamma}(z),\quad z' = \begin{pmatrix}x'\\ x^*\end{pmatrix},\quad \hat{z}' = z' - \frac{2}{\ell}F_{\algname{EFTP},\gamma}(z').
    \end{equation*}
    Then
    \begin{eqnarray*}
        \|\hat z - \hat z'\|^2 &=& \left\|z - \frac{2}{\ell}F_{\algname{EFTP},\gamma}(z) - z' + \frac{2}{\ell}F_{\algname{EFTP},\gamma}(z')\right\|^2\\
        &=& \left\|\begin{pmatrix}x\\ x^* \end{pmatrix} - \frac{2}{\ell}\begin{pmatrix} F(x) \\ -\frac{1}{\gamma}\left(x^* - x\right) \end{pmatrix} - \begin{pmatrix}x'\\ x^* \end{pmatrix} + \frac{2}{\ell}\begin{pmatrix} F(x') \\ -\frac{1}{\gamma}\left(x^* - x'\right) \end{pmatrix}\right\|^2\\
        &=& \left\|\begin{pmatrix}x - \frac{2}{\ell}F(x) - x' + \frac{2}{\ell} F(x')\\ x^* - \frac{2}{\ell\gamma}x + \frac{2}{\ell\gamma}x^* - x^* + \frac{2}{\ell\gamma}x' - \frac{2}{\ell\gamma}x^* \end{pmatrix}\right\|^2\\
        &=& \left\|x - x' - \frac{2}{\ell}\left(F(x) - F(x')\right)\right\|^2 + \frac{4}{\ell^2\gamma^2}\|x-x'\|^2 \\
        &=& \left(1 + \frac{4}{\ell^2\gamma^2}\right)\|x-x'\|^2  + \frac{4}{\ell^2}\left(\|F(x) - F(x')\|^2 - \ell\langle x - x', F(x) - F(x') \rangle\right)\\
        &\overset{\eqref{eq:F_EFTP_is_not_star_cocoercive_tech_1}}{>}& \left(1 + \frac{4}{\ell^2\gamma^2}\right)\|x-x'\|^2 > \|x - x'\|^2 = \|z - z'\|^2, 
    \end{eqnarray*}
    i.e., $\Id - \frac{2}{\ell}F_{\algname{EFTP},\gamma}$ is not non-expansive.
\end{proof}

\subsection{Random-Iterate Convergence of \eqref{eq:EFTP_update} for Star-Monotone Operators}\label{sec:rand_iter_EFTP}
\begin{theorem}[Random-iterate convergence of \eqref{eq:EFTP_update}]\label{thm:EFTP_random_itre_convergence}
     Let $F:\R^d \to \R^d$ be star-monotone around $x^*$, i.e., $F(x^*) = 0$ and
    \begin{equation}
        \forall x\in\R^d \quad \langle F(x), x - x^* \rangle \ge 0, \notag
    \end{equation}
    and $L$-Lipschitz. Then for all $K \ge 0$ we have
    \begin{equation}
        \EE\|F(\widehat{x}^{K})\|^2 \le \frac{\|x^0 - x^*\|^2}{\gamma^2(1-10\gamma^2 L^2)(K+1)}, \label{eq:random_iter_conv_EFTP}
    \end{equation}
    where $\widehat{x}^{K}$ is chosen uniformly at random from the set of iterates $\{\tx^0, \tx^1, \ldots, \tx^K\}$ produced by \eqref{eq:EFTP_update} with $0 < \gamma < \nicefrac{1}{\sqrt{10}L}$.
\end{theorem}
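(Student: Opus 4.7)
}

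The plan is to apply the auxiliary Lemma~\ref{lem:EFTP_aux_lemma} with $x=x^*$, kill the inner-product term on the left using star-monotonicity of $F$, and then telescope. Concretely, plugging $x=x^*$ into \eqref{eq:EFTP_aux_lemma} and noting that $\langle F(\tx^{k+1}),\tx^{k+1}-x^*\rangle \ge 0$ by \eqref{eq:star_monotonicity_def_appendix} (applied to $\tx^{k+1}$), one gets
\begin{equation*}
\|\tx^{k+1}-x^k\|^2 \;\le\; \|x^k-x^*\|^2 - \|x^{k+1}-x^*\|^2 + \gamma^2 L^2 \|\tx^{k+1}-\tx^k\|^2.
\end{equation*}
Summing this from $k=0$ to $K$ makes the first two terms on the right telescope into $\|x^0-x^*\|^2 - \|x^{K+1}-x^*\|^2 \le \|x^0-x^*\|^2$, which is the source of the desired bound.

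Next I would translate everything into norms of $F(\tx^k)$. From the \eqref{eq:EFTP_update} update $\tx^{k+1}=x^k-\gamma F(\tx^k)$ we immediately have $\|\tx^{k+1}-x^k\|^2 = \gamma^2\|F(\tx^k)\|^2$, which turns the left-hand side of the telescoped inequality into $\gamma^2 \sum_{k=0}^{K}\|F(\tx^k)\|^2$. Similarly, combining $\tx^{k+1}=x^k-\gamma F(\tx^k)$, $\tx^k=x^{k-1}-\gamma F(\tx^{k-1})$ and $x^k-x^{k-1} = -\gamma F(\tx^k)$ (the second update in \eqref{eq:EFTP_update}) gives, for $k\ge 1$, the identity $\tx^{k+1}-\tx^k = -2\gamma F(\tx^k) + \gamma F(\tx^{k-1})$, while for $k=0$ (using $\tx^0=x^0$) we have $\tx^1-\tx^0=-\gamma F(\tx^0)$.

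Using $\|a+b\|^2\le 2\|a\|^2+2\|b\|^2$ (i.e.\ \eqref{eq:a+b} with $\alpha=1$), the first identity yields $\|\tx^{k+1}-\tx^k\|^2 \le 8\gamma^2\|F(\tx^k)\|^2 + 2\gamma^2\|F(\tx^{k-1})\|^2$ for $k\ge 1$. Summing from $k=0$ to $K$ and reindexing, one obtains the key bound
\begin{equation*}
\sum_{k=0}^{K}\|\tx^{k+1}-\tx^k\|^2 \;\le\; 10\,\gamma^2 \sum_{k=0}^{K}\|F(\tx^k)\|^2,
\end{equation*}
which is where the factor of $10$ in the statement comes from. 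Plugging this back into the telescoped inequality gives $\gamma^2(1-10\gamma^2 L^2)\sum_{k=0}^{K}\|F(\tx^k)\|^2 \le \|x^0-x^*\|^2$; since $\gamma<1/(\sqrt{10}L)$ the factor on the left is positive, and dividing by $K+1$ produces the average squared norm on the left, which by uniform sampling equals $\mathbb{E}\|F(\widehat{x}^K)\|^2$.

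The only delicate step is the bound on $\sum\|\tx^{k+1}-\tx^k\|^2$: one must be a bit careful to use an identity for $\tx^{k+1}-\tx^k$ that is \emph{linear in the operator evaluations at $\tx^k$ and $\tx^{k-1}$} (rather than at $x^k$), so that everything stays in terms of $\|F(\tx^k)\|^2$ and the bookkeeping of reindexing yields a single sum with a clean constant. Everything else (the application of the auxiliary lemma, the telescoping, and the use of star-monotonicity) is straightforward.
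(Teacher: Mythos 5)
Your proposal is correct and follows essentially the same route as the paper's proof: apply Lemma~\ref{lem:EFTP_aux_lemma} with $x=x^*$, drop the inner product by star-monotonicity, rewrite $\tx^{k+1}-\tx^k$ via the identity \eqref{eq:EFTP_update_to_OG_update}, bound its squared norm by a weighted Young inequality, and telescope. The only cosmetic difference is the split in that Young step (you use $8\gamma^2\|F(\tx^k)\|^2+2\gamma^2\|F(\tx^{k-1})\|^2$, the paper uses $5+5$ via \eqref{eq:a+b} with $\alpha=\nicefrac14$), and both yield the same total coefficient $10\gamma^2$ per index and hence the same constant.
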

\begin{proof}
    Lemma~\ref{lem:EFTP_aux_lemma} with $x = x^*$ implies 
    \begin{equation*}
        2\gamma \langle F(\tx^{k+1}), \tx^{k+1} - x^* \rangle \le \|x^k - x^*\|^2 - \|x^{k+1} - x^*\|^2 - \|\tx^{k+1} - x^{k}\|^2 + \gamma^2 L^2 \|\tx^k - \tx^{k+1}\|^2.
    \end{equation*}
    Since $F$ is star monotone and $\tx^{k+1} - x^{k} = -\gamma F(\tx^k)$, we have
    \begin{eqnarray*}
        \gamma^2\|F(\tx^k)\|^2 &\le& \|x^k - x^*\|^2 - \|x^{k+1} - x^*\|^2 + \gamma^2 L^2 \|\tx^k - \tx^{k+1}\|^2\\
        &\overset{\eqref{eq:EFTP_update_to_OG_update}}{=}& \|x^k - x^*\|^2 - \|x^{k+1} - x^*\|^2 + \gamma^4 L^2 \|2F(\tx^k) - F(\tx^{k-1})\|^2\\
        &\overset{\eqref{eq:a+b}}{\le}& \|x^k - x^*\|^2 - \|x^{k+1} - x^*\|^2 + \gamma^4 L^2 \left(1+\frac{1}{4}\right) \|2F(\tx^k)\|^2\\
        &&\quad + \gamma^4 L^2 (1+4) \|F(\tx^{k-1})\|^2\\
        &=& \|x^k - x^*\|^2 - \|x^{k+1} - x^*\|^2 + 5\gamma^4 L^2 \|F(\tx^k)\|^2 + 5\gamma^4 L^2 \|F(\tx^{k-1})\|^2
    \end{eqnarray*}
    for $k \ge 1$, and
    \begin{eqnarray*}
        \gamma^2\|F(\tx^0)\|^2 &\le& \|x^0 - x^*\|^2 - \|x^{1} - x^*\|^2 + \gamma^2 L^2 \|\tx^0 - \tx^{1}\|^2\\
        &\overset{x^0 = \tx^0}{=}& \|x^0 - x^*\|^2 - \|x^{1} - x^*\|^2 + \gamma^4 L^2 \|F(\tx^{0})\|^2\\
        &\le& \|x^0 - x^*\|^2 - \|x^{1} - x^*\|^2 + 5\gamma^4 L^2 \|F(\tx^{0})\|^2.
    \end{eqnarray*}
    Rearranging the terms, we derive for all $k\ge 1$ that
    \begin{eqnarray}
        \gamma^2(1-5\gamma^2L^2)\|F(\tx^k)\|^2 &\le& \|x^k - x^*\|^2 - \|x^{k+1} - x^*\|^2 + 5\gamma^4 L^2 \|F(\tx^{k-1})\|^2 \label{eq:EFTP_rand_it_technical_1},\\
        \gamma^2(1-5\gamma^2L^2)\|F(\tx^0)\|^2 &\le& \|x^0 - x^*\|^2 - \|x^{1} - x^*\|^2. \label{eq:EFTP_rand_it_technical_2}
    \end{eqnarray}
    Next, we sum up inequalities \eqref{eq:EFTP_rand_it_technical_1} for $k = 1,\ldots, K$ and \eqref{eq:EFTP_rand_it_technical_2}:
    \begin{eqnarray*}
        \gamma^2(1-5\gamma^2 L^2)\sum\limits_{k=0}^K\|F(\tx^k)\|^2 &\le& \sum\limits_{k=1}^K\left(\|x^k - x^*\|^2 - \|x^{k+1} - x^*\|^2\right) + 5\gamma^4L^2\sum\limits_{k=0}^{K-1}\|F(\tx^k)\|^2\\
        &\le& \|x^0 - x^*\|^2 + 5\gamma^4L^2\sum\limits_{k=0}^{K}\|F(\tx^k)\|^2.
    \end{eqnarray*}
    Rearranging the terms and dividing the result by $\gamma^2(1-10\gamma^2 L^2)(K+1)$, we get
    \begin{equation*}
        \frac{1}{K+1}\sum\limits_{k=0}^K\|F(\tx^k)\|^2 \le \frac{\|x^0 - x^*\|^2}{\gamma^2(1-10\gamma^2 L^2)(K+1)}.
    \end{equation*}
    Finally, since $\widehat{x}^K$ is chosen uniformly at random from the set $\{\tx^0,\tx^1,\ldots,\tx^K\}$ we derive
    \begin{equation*}
        \EE\|F(\widehat x^K)\|^2 = \frac{1}{K+1}\sum\limits_{k=0}^{K}\|F(\tx^k)\|^2 \le \frac{\|x^0 - x^*\|^2}{\gamma^2(1-10\gamma^2 L^2)(K+1)}.
    \end{equation*}
\end{proof}

\newpage

\section{HAMILTONIAN GRADIENT METHOD}\label{sec:HGM}
Although Hamiltonian Gradient Method (\algname{HGM}) is not considered as an approximation of \algname{PP}, it attracted a lot of attention during the recent few years. Therefore, it is worth to study whether the operator of \algname{HGM} is cocoercive. First of all, \algname{HGM}
\begin{equation}
\label{DetHamiltonian}
x^{k+1}=  x^k - \gamma \nabla F(x^k)^\top F(x^k), \tag{HGM}
\end{equation}
can be seen as \algname{GD} applied to minimize function $\cH(x)= \nicefrac{\|F(x)\|^2}{2}$.
The corresponding operator is $F_{\cH}(x) = \nabla F(x)^\top F(x)$.

\subsection{Affine Case}
Let us start with the affine case.
 
\begin{lemma}\label{lem:HGM_is_cocoercive_in_affine_case}
    Let $F(x) = \mA x + b$ be $L$-Lipschitz. Then, Hamiltonian operator $F_{\cH}(x) = \nabla F(x)^\top F(x)$ is $L$-cocoercive.
\end{lemma}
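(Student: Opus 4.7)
The plan exploits the fact that in the affine case $F_{\cH}$ is itself a linear map, so cocoercivity reduces to a one-line spectral fact about $\mathbf{A}^\top\mathbf{A}$. Since $\nabla F(x)\equiv \mathbf{A}$, one immediately has $F_{\cH}(x) = \mathbf{A}^\top(\mathbf{A}x+b)$ and therefore
\[
F_{\cH}(x)-F_{\cH}(y) \;=\; \mathbf{A}^\top\mathbf{A}(x-y) \qquad \text{for all } x,y\in\R^d.
\]

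The first step, which is really the heart of the argument, is to evaluate the inner product appearing on the right-hand side of the cocoercivity inequality \eqref{eq:l_cocoercivity}. Writing $u:=x-y$ and using that $\mathbf{A}^\top\mathbf{A}$ is self-adjoint,
\[
\langle F_{\cH}(x)-F_{\cH}(y),\, x-y\rangle \;=\; \langle \mathbf{A}^\top\mathbf{A}u,\, u\rangle \;=\; \|\mathbf{A} u\|^2.
\]
This identity is the crucial one because it rewrites the target inner product as a squared norm that the $L$-Lipschitz hypothesis directly controls.

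The second step is to upper-bound the squared norm of the increment. Set $v:=\mathbf{A}u$, so that $\mathbf{A}^\top\mathbf{A}u=\mathbf{A}^\top v$, and use the operator-norm bound $\|\mathbf{A}^\top v\|\le \|\mathbf{A}\|\,\|v\|$ together with $\|\mathbf{A}\|\le L$ (which is the operator-norm form of the $L$-Lipschitzness of the affine $F$). This gives
\[
\|F_{\cH}(x)-F_{\cH}(y)\|^2 \;=\; \|\mathbf{A}^\top v\|^2 \;\le\; \|\mathbf{A}\|^2\,\|\mathbf{A} u\|^2,
\]
and combining with the identity of the first step yields the cocoercivity bound of the form required by \eqref{eq:l_cocoercivity}.

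There is no real obstacle here; the only subtlety is the standard fact $\|\mathbf{A}^\top\|=\|\mathbf{A}\|$, which can alternatively be made explicit by diagonalizing the symmetric PSD matrix $\mathbf{B}:=\mathbf{A}^\top\mathbf{A}$: its eigenvalues are the squared singular values of $\mathbf{A}$ and hence lie in $[0,\|\mathbf{A}\|^2]$, so $\mathbf{B}^2\preceq \|\mathbf{A}\|^2\,\mathbf{B}$, which is \eqref{eq:l_cocoercivity} for $F_{\cH}$ in disguise. A conceptually equivalent route is via Baillon--Haddad: in the affine case $F_{\cH}=\nabla\cH$ with $\cH(x)=\tfrac12\|\mathbf{A}x+b\|^2$, which is a convex quadratic whose gradient is Lipschitz with constant $\|\mathbf{A}\|^2$; the Baillon--Haddad theorem then identifies this Lipschitz constant with the cocoercivity constant of $F_{\cH}$.
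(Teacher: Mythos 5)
Your argument is correct, and it is essentially the paper's proof in more elementary clothing: the paper shows that $\cH(x)=\tfrac{1}{2}\|\mA x+b\|^2$ is convex with $\nabla^2\cH=\mA^\top\mA$ and then invokes the cocoercivity of gradients of convex smooth functions (Baillon--Haddad), which is exactly the spectral inequality $(\mA^\top\mA)^2\preceq\|\mA\|^2\,\mA^\top\mA$ that you write out directly. One point deserves attention, though: your chain of inequalities delivers cocoercivity with parameter $\|\mA\|^2\le L^2$, not $L$ as the lemma states, and this discrepancy is not a defect of your proof --- the constant $L$ in the lemma is in fact wrong. Taking $F(x)=Lx$ in one dimension gives $F_{\cH}(x)=L^2x$, and $L$-cocoercivity would require $L^4\|x-y\|^2\le L^3\|x-y\|^2$, which fails for $L>1$; the correct constant is $L^2$ (equivalently $\sigma_{\max}^2(\mA)$). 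The paper's own proof contains the same slip: from $\sqrt{\lambda_{\max}(\mA^\top\mA)}\le L$ it concludes $\mA^\top\mA\preceq L\mI$, whereas the correct conclusion is $\mA^\top\mA\preceq L^2\mI$, so $\cH$ is $L^2$-smooth and $F_{\cH}$ is $L^2$-cocoercive. Your derivation is sound; just state explicitly the constant you actually obtain rather than the one in the lemma.
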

\begin{proof}
    We have
    \begin{eqnarray*}
        \cH(x) &=& \frac{1}{2}\|F(x)\|^2 \\
        &=& \frac{1}{2}\|\mA x + b\|^2\\
        &=& \frac{1}{2}x^\top \mA^\top \mA x + b^\top \mA x + \frac{1}{2}\|b\|^2.
    \end{eqnarray*}
    Since $\nabla^2 \cH(x) = \mA^\top \mA \succeq 0$ function $\cH(x)$ is convex. Next, $L$-Lipschitzness of $\mA$ implies that $\|\mA\|_2 = \sqrt{\lambda_{\max}(\mA^\top \mA)} \le L$, i.e., $\nabla^2 \cH(x) = \mA^\top \mA \preceq L\mI$. Therefore, $\cH(x)$ is $L$-smooth function. It is well known \citep{nesterov2018lectures} that the gradient of convex $L$-smooth function is $L$-cocoercive, i.e., $\nabla \cH(x)  = \nabla F(x)^\top F(x) = F_{\cH}(x)$ is $L$-cocoercive operator.
\end{proof}

As a direct application of Theorem~\ref{thm:last_iter_conv_GD} we get the following result.
\begin{theorem}[Last-iterate convergence of \eqref{DetHamiltonian}: affine case]
    Let $F(x) = \mA x + b$ be $L$-Lipschitz. Then for all $K\ge 0$ we have
    \begin{equation}
        \left\|\nabla F(x^K)^\top F(x^k)\right\|^2  \le \frac{L\|x^0 - x^*\|^2}{\gamma (K+1)}, \label{eq:last_iter_conv_HGM_affine_bad_guarantee}
    \end{equation}
    where $x^K$ is produced by \eqref{DetHamiltonian} with $0 < \gamma \le \nicefrac{1}{L}$.
\end{theorem}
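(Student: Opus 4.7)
The plan is essentially to recognize that the theorem is an immediate composition of two results already established in the paper, with no new computation required. First I would observe, as noted in the preamble to the theorem, that \eqref{DetHamiltonian} is literally the gradient descent iteration applied to the Hamiltonian operator $F_{\cH}(x) = \nabla F(x)^\top F(x)$, i.e.\ \eqref{DetHamiltonian} has exactly the form $x^{k+1} = x^k - \gamma F_{\cH}(x^k)$. Hence the quantity $\|\nabla F(x^K)^\top F(x^K)\|^2$ appearing in \eqref{eq:last_iter_conv_HGM_affine_bad_guarantee} is precisely $\|F_{\cH}(x^K)\|^2$.

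Next I would invoke Lemma~\ref{lem:HGM_is_cocoercive_in_affine_case}, which states that whenever $F(x) = \mA x + b$ is $L$-Lipschitz, the operator $F_{\cH}$ is $L$-cocoercive. Since $F(x^*) = 0$ implies $F_{\cH}(x^*) = \nabla F(x^*)^\top F(x^*) = 0$, the point $x^*$ remains a solution for the reformulated problem driven by $F_{\cH}$. I would then apply Theorem~\ref{thm:last_iter_conv_GD} (last-iterate convergence of \algname{GD} under cocoercivity) with operator $F_{\cH}$ and cocoercivity constant $\ell = L$, which, under the stepsize condition $0 < \gamma \le 1/L$, yields
\[
\|F_{\cH}(x^K)\|^2 \le \frac{L\|x^0 - x^*\|^2}{\gamma(K+1)},
\]
i.e.\ exactly \eqref{eq:last_iter_conv_HGM_affine_bad_guarantee}.

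There is no real obstacle: the only thing worth double-checking is the stepsize compatibility, namely that Theorem~\ref{thm:last_iter_conv_GD} is applied with the same $\gamma$ and the same $\ell = L$ used in the statement, and that the identification of $x^*$ as a valid reference point for the cocoercive operator $F_{\cH}$ is justified (which is immediate from $F(x^*) = 0$). Everything else is purely bookkeeping.
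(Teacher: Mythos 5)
Your proposal is correct and is exactly the paper's argument: the paper states the theorem as ``a direct application of Theorem~\ref{thm:last_iter_conv_GD}'' after establishing Lemma~\ref{lem:HGM_is_cocoercive_in_affine_case}, i.e.\ \eqref{DetHamiltonian} is \algname{GD} for the $L$-cocoercive operator $F_{\cH}$, and the bound follows with $\ell = L$. Nothing is missing.
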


However, this theorem completely ignores the fact that $\nabla F(x)^\top F(x)$ corresponds to the gradient of function $\cH(x)$. Taking into account that $\cH(x) = \frac{1}{2}\|\mA x + b\|^2$, one can prove that $\cH(x)$ is quasi-strongly convex \citep{necoara2019linear} and get the following result for Gradient Descent applied to minimize function $\cH(x)$.

\begin{theorem}[See Theorem 11 from \citet{necoara2019linear}]
    Let $F(x) = \mA x + b$. Then for all $K\ge 0$ we have
    \begin{equation}
        \|x^K - x^*\|^2 \le \left(\frac{1 -\kappa(\mA)}{1+\kappa(\mA)}\right)\|x^0 - x^*\|^2, \label{eq:last_iter_conv_HGM_affine_good_guarantee_func_val}
    \end{equation}
    where $\kappa(\mA) = \nicefrac{\sigma_{\min}^2(\mA)}{\sigma_{\max}^2(\mA)}$, $\sigma_{\min}^2(\mA)$ and $\sigma_{\max}^2(\mA)$ are the smallest non-zero and the largest singular values of $\mA$ respectively, and $x^K$ is produced by \eqref{DetHamiltonian} with $\gamma = \nicefrac{1}{\sigma_{\max}^2(\mA)}$.
\end{theorem}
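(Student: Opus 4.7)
The plan is to view \eqref{DetHamiltonian} as plain Gradient Descent applied to the quadratic Hamiltonian $\cH(x) = \tfrac12\|\mA x + b\|^2$, since $\nabla\cH(x) = \mA^\top(\mA x + b) = \nabla F(x)^\top F(x)$, and then invoke the classical linear-convergence analysis for GD under smoothness plus \emph{quasi-strong convexity} in the sense of \citet{necoara2019linear}. The Hessian $\nabla^2\cH \equiv \mA^\top\mA$ is constant and PSD, with all nonzero eigenvalues in $[\sigma_{\min}^2(\mA), \sigma_{\max}^2(\mA)]$; in particular $\cH$ is $\sigma_{\max}^2(\mA)$-smooth, and by Lemma~\ref{lem:HGM_is_cocoercive_in_affine_case} its gradient is $\sigma_{\max}^2(\mA)$-cocoercive.

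The main technical step I would carry out is to establish $\sigma_{\min}^2(\mA)$-quasi-strong convexity of $\cH$: for every $x\in\R^d$, with $x^{\star} := \Pi_{\cX^*}(x)$ the Euclidean projection onto the (nonempty, affine) minimizer set $\cX^* = \{y : \mA y + b = 0\}$,
\begin{equation*}
\langle \nabla\cH(x),\, x - x^{\star}\rangle \,\geq\, \sigma_{\min}^2(\mA)\,\|x - x^{\star}\|^2.
\end{equation*}
The key observation is that $x - x^{\star}$ is orthogonal to the tangent space of $\cX^*$, which is exactly $\mathrm{Null}(\mA)$, so $x - x^{\star} \in \mathrm{Range}(\mA^\top)$. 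Since $x^{\star} \in \cX^*$, $\nabla\cH(x) = \mA^\top\mA(x - x^{\star})$, and therefore
\begin{equation*}
\langle \nabla\cH(x),\, x - x^{\star}\rangle \,=\, \|\mA(x - x^{\star})\|^2 \,\geq\, \sigma_{\min}^2(\mA)\,\|x - x^{\star}\|^2,
\end{equation*}
where the last bound uses that $\mA^\top\mA$ restricted to $\mathrm{Range}(\mA^\top)$ has smallest eigenvalue $\sigma_{\min}^2(\mA)$.

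With both properties in hand, I would close the proof by the standard GD one-step descent. Setting $x_k^{\star} := \Pi_{\cX^*}(x^k)$ and expanding
\begin{equation*}
\|x^{k+1} - x_k^{\star}\|^2 \,=\, \|x^k - x_k^{\star}\|^2 - 2\gamma\,\langle \nabla\cH(x^k),\, x^k - x_k^{\star}\rangle + \gamma^2\,\|\nabla\cH(x^k)\|^2,
\end{equation*}
one combines cocoercivity, the quasi-strong convexity above, the choice $\gamma = \nicefrac{1}{\sigma_{\max}^2(\mA)}$, and the elementary inequality $\|x^{k+1} - x_{k+1}^{\star}\| \leq \|x^{k+1} - x_k^{\star}\|$ to obtain a per-iteration geometric contraction of the squared distance to $\cX^*$; iterating $K$ times gives the claimed bound (up to the exponent $K$ on the rate, which is implicit in the original Necoara et al.\ statement). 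The main obstacle is precisely the quasi-strong-convexity step: since $\cH$ is \emph{not} strongly convex on all of $\R^d$ when $\mA$ has a nontrivial null space, one must carefully exploit the orthogonality property of the projection to restrict the spectral analysis to $\mathrm{Range}(\mA^\top)$ rather than all of $\R^d$; this null-space/range decomposition, together with the fact that the GD update never moves along $\mathrm{Null}(\mA)$ (because $\nabla\cH(x^k) \in \mathrm{Range}(\mA^\top)$), is what turns the mere positivity of $\sigma_{\min}^2(\mA)$ into an effective strong-convexity constant along the entire trajectory.
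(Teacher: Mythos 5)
The paper itself does not prove this statement; it simply imports it as Theorem~11 of \citet{necoara2019linear}, so your self-contained argument is necessarily a different route. Its structural core is correct: the minimizer set of $\cH$ is the affine set $\cX^*=\{y:\mA y+b=0\}$, the projection residual $x-\Pi_{\cX^*}(x)$ lies in $\mathrm{Range}(\mA^\top)=\mathrm{Null}(\mA)^{\perp}$, and on that subspace $\mA^\top\mA$ has smallest eigenvalue $\sigma_{\min}^2(\mA)$, which gives exactly the quasi-strong-convexity bound $\langle\nabla\cH(x),x-x^{\star}\rangle=\|\mA(x-x^{\star})\|^2\ge\sigma_{\min}^2(\mA)\|x-x^{\star}\|^2$. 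The observation that the iterates never move along $\mathrm{Null}(\mA)$ is also right (the projection onto $\cX^*$ is in fact constant along the trajectory), and you correctly flag the missing exponent $K$ on the rate as a typo in the statement.

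The gap is quantitative but real: combining cocoercivity and quasi-strong convexity as two \emph{separate} inequalities in the one-step expansion, with $\gamma=\nicefrac{1}{\sigma_{\max}^2(\mA)}$, yields
\begin{equation*}
\|x^{k+1}-x^{\star}\|^2\le\|x^k-x^{\star}\|^2-\gamma\langle\nabla\cH(x^k),x^k-x^{\star}\rangle\le\bigl(1-\kappa(\mA)\bigr)\|x^k-x^{\star}\|^2,
\end{equation*}
i.e., the per-step contraction factor $1-\kappa(\mA)$, which is strictly larger than the claimed $\frac{1-\kappa(\mA)}{1+\kappa(\mA)}$ whenever $0<\kappa(\mA)<1$. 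Your argument therefore proves a strictly weaker bound than the one stated. To recover the stated constant you need the combined inequality $\langle\nabla\cH(x),x-x^{\star}\rangle\ge\frac{\mu L}{\mu+L}\|x-x^{\star}\|^2+\frac{1}{\mu+L}\|\nabla\cH(x)\|^2$ with $\mu=\sigma_{\min}^2(\mA)$ and $L=\sigma_{\max}^2(\mA)$; for the quadratic $\cH$ this follows by diagonalizing $\mA^\top\mA$ on $\mathrm{Range}(\mA^\top)$ and checking $(\lambda-\mu)(\lambda-L)\le0$ for each nonzero eigenvalue $\lambda$. Substituting it into the one-step expansion (the $\|\nabla\cH(x^k)\|^2$ terms are then nonpositive because $\gamma\le\nicefrac{2}{(\mu+L)}$) gives the per-step factor $\frac{L-\mu}{L+\mu}=\frac{1-\kappa(\mA)}{1+\kappa(\mA)}$ and hence the theorem with the exponent $K$ restored.
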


Similar results are also derived in \citet{abernethy2019last,loizou2020stochastic}.

\subsection{General Case}
Next, we consider the setup when $F$ is monotone, $L$-Lipschitz, but not necessarily affine. In this case, it turns out that Hamiltonian operator $F_{\cH}$ can be non-cocoercive and function $\cH$ can be non-convex. To prove this, we provide an example of convex smooth function $f(x)$ such that $\|\nabla f(x)\|^2$ is non-convex.

\begin{theorem}[Non-cocoercivity of the Hamiltonian operator]\label{thm:HGM_non_cocoercive}
    Consider strongly convex smooth function $f(x) = \ln(1 + e^x) + \frac{x^2}{200}$ of a scalar argument $x \in \R$. Then, Hamiltonian operator $\cH(x) = \nabla F(x)^\top F(x)$ is non-cocoercive for monotone Lipschitz $F(x) = \nabla f(x)$. 
\end{theorem}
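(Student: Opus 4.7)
The plan is to exploit that we are in dimension $d=1$, which collapses the cocoercivity question to a one-dimensional monotonicity check. Writing $F(x) = f'(x)$, we have $\nabla F(x) = f''(x)$, so $F_{\cH}(x) = f'(x)\,f''(x)$, which is exactly $\frac{d}{dx}\bigl(\frac{1}{2}(f'(x))^2\bigr)$. Any cocoercive operator is monotone (by Cauchy--Schwarz on \eqref{eq:l_cocoercivity}), and in one dimension a (differentiable) operator is monotone if and only if its derivative is nonnegative everywhere. Hence it suffices to exhibit a single point $x_0$ at which
\begin{equation*}
F_{\cH}'(x_0) \;=\; f''(x_0)^2 + f'(x_0)\,f'''(x_0) \;<\; 0.
\end{equation*}

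Next I would write down the relevant derivatives in closed form using $\sigma(x) = e^x/(1+e^x)$:
\begin{equation*}
f'(x) = \sigma(x) + \tfrac{x}{100}, \quad f''(x) = \sigma(x)(1-\sigma(x)) + \tfrac{1}{100}, \quad f'''(x) = \sigma(x)(1-\sigma(x))(1-2\sigma(x)).
\end{equation*}
Two useful side remarks fall out immediately: since $\tfrac{1}{100}\le f''(x)\le \tfrac{1}{4}+\tfrac{1}{100}$, the function $f$ is strongly convex and smooth, so $F = f'$ is monotone and Lipschitz, confirming that the example meets the hypotheses of the theorem. Moreover, for $x>0$ we have $\sigma(x)>\tfrac{1}{2}$, so $f'''(x) < 0$, while $f'(x) > 0$; thus $f'(x)f'''(x)$ is negative on the positive half-line, which is exactly where we should look for a witness.

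Finally I would pick $x_0 = 3$ and plug in. Then $\sigma(3) = 1/(1+e^{-3}) \approx 0.9526$, giving $f''(3) \approx 0.0552$, so $f''(3)^2 \approx 3.05\times 10^{-3}$. Also $\sigma(3)(1-\sigma(3)) \approx 0.0452$ and $1-2\sigma(3) \approx -0.9051$, so $f'''(3) \approx -0.0409$; together with $f'(3) \approx 0.9826$ this yields $f'(3)\,f'''(3) \approx -0.0402$, hence $F_{\cH}'(3) \approx 3.05\times 10^{-3} - 4.02\times 10^{-2} < 0$. Therefore $F_{\cH}$ is not monotone, and in particular not cocoercive.

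The main obstacle is really just the calibration of the regularization constant: one needs the coefficient of $x^2$ small enough that the $1/100$ contribution to $f''$ does not dominate $f''(x_0)^2$, yet the construction must still produce a monotone, Lipschitz $F$. The choice $\tfrac{1}{200}x^2$ (which contributes $1/100$ to $f''$ and $0$ to $f'''$) achieves this: $f''(x)^2$ remains bounded by roughly $(0.26)^2$ but is as small as $\approx(0.055)^2$ near $x=3$, while the cubic term $|f'(x)f'''(x)|$ is an order of magnitude larger there. Once the framing above is in place, the proof reduces to verifying the single numerical inequality displayed.
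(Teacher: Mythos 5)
Your proposal is correct and follows essentially the same route as the paper: both arguments reduce to checking that $\cH''(3) = f''(3)^2 + f'(3)f'''(3) < 0$, and your numerical evaluation at $x_0 = 3$ checks out. The only (cosmetic) difference is the wrapper — the paper invokes the equivalence between convexity-plus-smoothness of $\cH$ and cocoercivity of $\nabla\cH$, whereas you use only the elementary direction (cocoercive $\Rightarrow$ monotone $\Rightarrow$ nonnegative derivative in $d=1$), which is if anything slightly leaner.
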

\begin{proof}
    Function $f(x) = \ln(1 + e^x) + \frac{x^2}{200}$ is logistic loss with $\ell_2$-regularization. Therefore, it is strongly convex smooth function and its gradient
    \begin{equation*}
        F(x) = \nabla f(x) = \frac{e^x}{1+e^x} + \frac{x}{100}
    \end{equation*}
    is (strongly) monotone and Lipschitz operator. Below we prove that Hamiltonian function $\cH(x) = \frac{1}{2}\|F(x)\|^2$ is non-convex. To show that we compute its second derivative:
    \begin{eqnarray*}
        2\cH(x) &=& \left(\frac{e^x}{1+e^x} + \frac{x}{100}\right)^2 = \frac{e^{2x}}{(1+e^x)^2} + \frac{xe^x}{50(1+e^x)} + \frac{x^2}{10000},\\
        2\nabla\cH(x) &=& \frac{2e^{2x}}{(1+e^x)^2} - \frac{2e^{3x}}{(1+e^x)^3} + \frac{(x+1)e^x}{50(1+e^x)} - \frac{x e^{2x}}{50(1+e^x)^2} + \frac{x}{5000}\\
        &=& \frac{2 e^{2x}}{(1+e^x)^3} + \frac{e^{2x} + e^x(x+1)}{50(1+e^x)^2} + \frac{x}{5000}\\
        2\nabla^2\cH(x) &=& \frac{4e^{2x}}{(1+e^x)^3} - \frac{6e^{3x}}{(1+e^x)^4} + \frac{2e^{2x} + e^x(x+2)}{50(1+e^x)^2} - \frac{2e^{3x} + 2e^{2x}(x+1)}{50(1+e^x)^3} + \frac{1}{5000}\\
        &=& \frac{2e^{2x}(2-e^x)}{(1+e^x)^4} + \frac{e^x\left(x+2 + e^x(2-x)\right)}{50(1+e^x)^3} + \frac{1}{5000}.
    \end{eqnarray*}
    Using simple computations one can check $\cH''(x)$ is negative for some $x\in\R$, e.g., one can check that $\cH''(3) < 0$. Therefore, function $\cH(x)$ is non-convex. Since convexity and smoothness of function $\cH(x)$ is equivalent to the cocoercivity of its gradient $\nabla \cH(x)$ \citep{nesterov2018lectures}, we conclude that Hamiltonian operator is non-cocoercive.
\end{proof}

Finally, one can use the optimization viewpoint of the Hamiltonian method and derive $\cO(\nicefrac{1}{K})$ random-iterate convergence guarantees in terms of $\|\nabla F(x^K)^\top F(x^K)\|^2$ when the Jacobian of $F$ is Lipschitz-continuous but $F$ is not necessary monotone. To show this the following lemma.

\begin{lemma}[Point-dependent smoothness of Hamiltonian function]\label{lem:point_dependent_smoothness_Hamiltonian}
    Let operator $F:\R^d \to \R^d$ be $L$-Lipschitz, its Jacobian $\nabla F(x)$ be $\Lambda$-Lipschitz, and $\cH(x) = \frac{1}{2}\|F(x)\|^2$. Then for any $x,y\in \R^d$ the following inequality holds:
    \begin{equation}
        \cH(y) \le \cH(x) + \langle\nabla \cH(x), y - x \rangle + \frac{L^2 + \Lambda\|F(x)\|}{2}\|y - x\|^2. \label{eq:point_dependent_smoothness_Hamiltonian}
    \end{equation}
\end{lemma}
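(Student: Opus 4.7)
The plan is to apply the fundamental theorem of calculus to $\cH$ along the segment from $x$ to $y$, which reduces the claim to a local Lipschitz-type bound on $\nabla \cH$. Under the hypotheses, $\nabla \cH(x) = \nabla F(x)^\top F(x)$ exists and is continuous, so
\[
\cH(y) - \cH(x) - \langle \nabla \cH(x), y - x \rangle = \int_0^1 \langle \nabla \cH(x + t(y-x)) - \nabla \cH(x),\, y - x \rangle\, dt.
\]
By Cauchy--Schwarz, it therefore suffices to show that $\|\nabla \cH(z) - \nabla \cH(x)\| \le (L^2 + \Lambda\|F(x)\|)\,\|z-x\|$ for every $z \in \R^d$.

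The key step is to pick the right add-and-subtract in the difference of gradients. I would write
\[
\nabla F(z)^\top F(z) - \nabla F(x)^\top F(x) = \nabla F(z)^\top\bigl(F(z) - F(x)\bigr) + \bigl(\nabla F(z) - \nabla F(x)\bigr)^\top F(x).
\]
The first term is bounded by $L^2\|z-x\|$, using that $L$-Lipschitzness of $F$ yields both $\|\nabla F(z)\|_{\mathrm{op}} \le L$ and $\|F(z) - F(x)\| \le L\|z-x\|$. The second term is bounded by $\Lambda\|z-x\|\cdot\|F(x)\|$ by the $\Lambda$-Lipschitzness of $\nabla F$. Summing these gives the desired bound on $\|\nabla \cH(z) - \nabla \cH(x)\|$.

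Plugging $z = x + t(y-x)$ into the integrand bounds it by $(L^2 + \Lambda\|F(x)\|)\, t\,\|y-x\|^2$, and $\int_0^1 t\, dt = 1/2$ yields \eqref{eq:point_dependent_smoothness_Hamiltonian}.

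There is no serious obstacle, but one point deserves attention: the ``symmetric'' decomposition using $\pm\, \nabla F(x)^\top F(z)$ instead would produce $\|F(z)\|$ on the right-hand side rather than the desired $\|F(x)\|$. Choosing the decomposition above is precisely what makes the smoothness constant depend only on $\|F(x)\|$, which is what is needed later (in Theorem~\ref{thm:best_iter_conv_HGM_appendix}) to exploit the decrease of $\|F\|$ along the trajectory and obtain a true $\cO(1/K)$ rate rather than one contaminated by $\|F(z)\|$ at intermediate points.
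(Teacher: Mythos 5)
Your proof is correct and follows essentially the same route as the paper's: the same add-and-subtract decomposition $\nabla F(z)^\top F(z)-\nabla F(x)^\top F(x)=\nabla F(z)^\top(F(z)-F(x))+(\nabla F(z)-\nabla F(x))^\top F(x)$, yielding the same point-dependent Lipschitz bound on $\nabla\cH$, followed by the same integral argument. Your remark about why this asymmetric decomposition (rather than one producing $\|F(z)\|$) is the right choice is exactly the point exploited later in Theorem~\ref{thm:best_iter_conv_HGM_appendix}.
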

\begin{proof}
    Since $F$ is $L$-Lipschitz its Jacobian has bounded norm: $\|\nabla F(x)\| = \|\nabla F(x)^\top\| \le L$ for all $x \in \R^d$. Using this and $\Lambda$-Lipschitzness of the Jacobian, we derive for any $x,y \in \R^d$
    \begin{eqnarray}
        \|\nabla \cH(y) - \nabla \cH(x)\| &=& \|\nabla F(y)^\top F(y) - \nabla F(x)^\top F(x)\|\notag\\
        &\le& \|\nabla F(y)^\top F(y) - \nabla F(y)^\top F(x)\| + \|\nabla F(y)^\top F(x) - \nabla F(x)^\top F(x)\|\notag\\
        &\le& \|\nabla F(y)^\top\|\cdot \|F(y) - F(x)\| + \|\nabla F(y)^\top - \nabla F(x)^\top\|\cdot \|F(x)\|\notag\\
        &\le& \|\nabla F(y)\|\cdot L\|x-y\| + \|\nabla F(y) - \nabla F(x)\|\cdot \|F(x)\|\notag\\
        &\le& L^2\|x - y\| + \Lambda\|F(x)\|\cdot \|x-y\|. \label{eq:Hamilton_lip_grad}
    \end{eqnarray}
    Next, following standard arguments \citep{nesterov2018lectures}, we get
    \begin{eqnarray*}
        \cH(y) &=& \cH(x) + \int\limits_{0}^1\langle \nabla \cH(x + t(y-x)), y-x \rangle dt\\
        &=& \cH(x) + \langle\nabla \cH(x), y - x \rangle + \int\limits_{0}^1\langle \nabla \cH(x + t(y-x)) - \nabla\cH(x), y-x \rangle dt\\
        &\le& \cH(x) + \langle\nabla \cH(x), y - x \rangle + \int\limits_{0}^1 \|\nabla \cH(x + t(y-x)) - \nabla\cH(x)\|\cdot \|y-x\| dt\\
        &\overset{\eqref{eq:Hamilton_lip_grad}}{\le}& \cH(x) + \langle\nabla \cH(x), y - x \rangle + \int\limits_{0}^1 (L^2 + \Lambda\|F(x)\|)t\|y-x\|^2 dt\\
        &=& \cH(x) + \langle\nabla \cH(x), y - x \rangle + \frac{L^2 + \Lambda\|F(x)\|}{2}\|y - x\|^2.
    \end{eqnarray*}
\end{proof}

\begin{theorem}[Best-iterate convergence of \eqref{DetHamiltonian}]\label{thm:best_iter_conv_HGM_appendix}
    Let operator $F:\R^d \to \R^d$ be $L$-Lipschitz, its Jacobian $\nabla F(x)$ be $\Lambda$-Lipschitz. Then for any $K\ge 0$ we have
    \begin{equation}
        \min\limits_{k=0,1,\ldots, K} \|\nabla F(x^k)^\top F(x^k)\|^2 \le \frac{\|F(x^0)\|^2}{\gamma\left(2 - \gamma(L^2 + \Lambda \|F(x^0)\|)\right)(K+1)}, \label{eq:best_iter_conv_HGM_appendix}
    \end{equation}
    where the sequence $x^0, x^1, \ldots, x^K$ is generated by \eqref{DetHamiltonian} with stepsize
    \begin{equation*}
        \gamma \le \frac{2}{L^2 + \Lambda\|F(x^0)\|}.
    \end{equation*}
    Moreover, for all $k \ge 0$ we have
    \begin{equation}
        \|F(x^{k+1})\| \le \|F(x^k)\|. \label{eq:relax_property_HGM}
    \end{equation}
\end{theorem}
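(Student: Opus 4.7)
The plan is to view \eqref{DetHamiltonian} as gradient descent applied to the Hamiltonian $\cH(x) = \tfrac{1}{2}\|F(x)\|^2$ and exploit the point-dependent smoothness bound from Lemma~\ref{lem:point_dependent_smoothness_Hamiltonian}. The induction hypothesis $\|F(x^k)\| \le \|F(x^0)\|$ will ensure that the effective smoothness constant at every iterate stays bounded by $L^2 + \Lambda\|F(x^0)\|$, which is exactly the quantity appearing in the stepsize condition.

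First, I would apply Lemma~\ref{lem:point_dependent_smoothness_Hamiltonian} at $x = x^k$ and $y = x^{k+1} = x^k - \gamma \nabla \cH(x^k)$, where $\nabla \cH(x^k) = \nabla F(x^k)^\top F(x^k)$. Plugging $y - x = -\gamma \nabla \cH(x^k)$ into \eqref{eq:point_dependent_smoothness_Hamiltonian} and simplifying yields the point-dependent descent inequality
\begin{equation*}
\cH(x^{k+1}) \le \cH(x^k) - \frac{\gamma}{2}\bigl(2 - \gamma(L^2 + \Lambda\|F(x^k)\|)\bigr)\|\nabla\cH(x^k)\|^2.
\end{equation*}

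Second, I would prove \eqref{eq:relax_property_HGM} by induction on $k$. The base case is vacuous. For the induction step, assuming $\|F(x^j)\| \le \|F(x^0)\|$ for all $j \le k$, the stepsize condition $\gamma \le \tfrac{2}{L^2 + \Lambda\|F(x^0)\|}$ gives $\gamma \le \tfrac{2}{L^2 + \Lambda\|F(x^k)\|}$, so the bracketed factor above is non-negative. Hence $\cH(x^{k+1}) \le \cH(x^k)$, which, since $2\cH = \|F\|^2$, is precisely $\|F(x^{k+1})\| \le \|F(x^k)\| \le \|F(x^0)\|$, closing the induction.

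Third, to derive \eqref{eq:best_iter_conv_HGM_appendix} I would substitute the uniform bound $\|F(x^k)\| \le \|F(x^0)\|$ into the descent inequality to obtain a constant descent factor:
\begin{equation*}
\frac{\gamma}{2}\bigl(2 - \gamma(L^2 + \Lambda\|F(x^0)\|)\bigr)\|\nabla\cH(x^k)\|^2 \le \cH(x^k) - \cH(x^{k+1}).
\end{equation*}
Summing over $k = 0, 1, \ldots, K$, telescoping, and using $\cH(x^{K+1}) \ge 0$ together with $2\cH(x^0) = \|F(x^0)\|^2$, I divide both sides by $(K+1)$ and lower-bound the average by the minimum to conclude \eqref{eq:best_iter_conv_HGM_appendix}.

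The main obstacle is the circular-looking dependence: the descent step requires an upper bound on the local smoothness constant $L^2 + \Lambda\|F(x^k)\|$, but a priori we only control this at $k=0$. The induction in step two resolves this cleanly, since each successful descent step automatically delivers the monotonicity $\|F(x^{k+1})\| \le \|F(x^k)\|$ needed to keep the stepsize admissible at the next step, so the two conclusions of the theorem are proved simultaneously rather than one after the other.
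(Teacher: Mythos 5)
Your proposal is correct and follows essentially the same route as the paper's proof: the point-dependent descent inequality from Lemma~\ref{lem:point_dependent_smoothness_Hamiltonian}, an induction establishing $\|F(x^{k+1})\|\le\|F(x^k)\|$ that keeps the local smoothness constant bounded by $L^2+\Lambda\|F(x^0)\|$, and then telescoping with the uniform descent factor. No gaps.
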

\begin{proof}
    We start with applying Lemma~\ref{lem:point_dependent_smoothness_Hamiltonian}: taking $y = x^{k+1} = x^k - \gamma \nabla \cH(x^k)$ and $x = x^k$ in \eqref{eq:point_dependent_smoothness_Hamiltonian}, we get
    \begin{eqnarray}
        \cH(x^{k+1}) &\overset{\eqref{eq:point_dependent_smoothness_Hamiltonian}}{\le}& \cH(x^k) + \langle \cH(x^k), x^{k+1} - x^k\rangle + \frac{L^2 + \Lambda\|F(x^{k})\|}{2}\|x^{k+1} - x^k\|^2\notag\\
        &=& \cH(x^k) - \frac{\gamma}{2}\left(2 - \gamma(L^2 + \Lambda \|F(x^k)\|)\right)\|\nabla \cH(x^k)\|^2. \label{eq:best_iter_conv_HGM_tech1}
    \end{eqnarray}
    Using this inequality we will derive \eqref{eq:relax_property_HGM} by induction. For $k = 0$ we use our assumption on $\gamma$ and get that $2 - \gamma(L^2 + \Lambda \|F(x^k)\| \ge 0$. Therefore, the second term in the right-hand side of \eqref{eq:best_iter_conv_HGM_tech1} for $k=0$ is non-positive. This implies that $\cH(x^1) \le \cH(x^0)$, which is equivalent to $\|F(x^1)\| \le \|F(x^0)\|$. Next, assume that for some $K > 0$ inequality \eqref{eq:relax_property_HGM} holds for $k = 0,1,\ldots, K-1$. Let us derive that \eqref{eq:relax_property_HGM} holds for $k = K$ as well. Using \eqref{eq:best_iter_conv_HGM_tech1} and our inductive assumption, we derive
    \begin{eqnarray*}
        \cH(x^{K+1}) &\le& \cH(x^K) - \frac{\gamma}{2}\left(2 - \gamma(L^2 + \Lambda \|F(x^K)\|)\right)\|\nabla \cH(x^K)\|^2\\
        &\overset{\eqref{eq:relax_property_HGM}}{\le}& \cH(x^K) - \frac{\gamma}{2}\left(2 - \gamma(L^2 + \Lambda \|F(x^0)\|)\right)\|\nabla \cH(x^K)\|^2 \le \cH(x^K),
    \end{eqnarray*}
    where in the last inequality we use our assumption on $\gamma$. Therefore, $\|F(x^{K+1})\| \le \|F(x^K)\|$, i.e., \eqref{eq:relax_property_HGM} holds for all $k \ge 0$. Using this, we continue our derivation from \eqref{eq:best_iter_conv_HGM_tech1}:
    \begin{eqnarray*}
        \cH(x^{k+1}) &\le& \cH(x^k) - \frac{\gamma}{2}\left(2 - \gamma(L^2 + \Lambda \|F(x^k)\|)\right)\|\nabla \cH(x^k)\|^2 \\
        &\overset{\eqref{eq:relax_property_HGM}}{\le}& \cH(x^k) - \frac{\gamma}{2}\left(2 - \gamma(L^2 + \Lambda \|F(x^0)\|)\right)\|\nabla \cH(x^k)\|^2.
    \end{eqnarray*}
    Summing up the above inequality for $k = 0,1,\ldots, K$ and rearranging the terms, we obtain
    \begin{eqnarray*}
        \frac{1}{K+1}\sum\limits_{k=0}^K \|\nabla \cH(x^k)\|^2 &\le& \frac{2}{\gamma\left(2 - \gamma(L^2 + \Lambda \|F(x^0)\|)\right)(K+1)}\sum\limits_{k=0}^{K}\left(\cH(x^k) - \cH(x^{k+1})\right)\\
        &\le& \frac{2\cH(x^0)}{\gamma\left(2 - \gamma(L^2 + \Lambda \|F(x^0)\|)\right)(K+1)}.
    \end{eqnarray*}
    Finally, using the definition of $\cH$ and
    \begin{equation*}
        \min\limits_{k=0,1,\ldots, K} \|\nabla F(x^k)^\top F(x^k)\| \le \frac{1}{K+1}\sum\limits_{k=0}^K \|\nabla \cH(x^k)\|^2,
    \end{equation*}
    we get \eqref{eq:best_iter_conv_HGM_appendix}.
\end{proof}

\end{document}